\newtheorem{thm}{Theorem}[section]
\newtheorem{prop}[thm]{Proposition}
\newtheorem{lemma}[thm]{Lemma}
\newtheorem{cor}[thm]{Corollary}
\newtheorem{claim}[thm]{Claim}
\numberwithin{equation}{section}
\numberwithin{thm}{section}
\theoremstyle{definition}
\newtheorem{defn}[thm]{Definition}
\newtheorem{condition}[thm]{Condition}
\theoremstyle{remark}
\newtheorem{rmk}[thm]{Remark}
\newtheorem*{thm*}{Theorem}
\DeclareMathAlphabet\mathbfcal{OMS}{cmsy}{b}{n}
\DeclareMathAlphabet{\mathpzc}{OT1}{pzc}{m}{it}
\newcommand{\C}{\mathbb{C}}
\newcommand{\R}{\mathbb{R}}
\newcommand{\Z}{\mathbb{Z}}
\newcommand{\s}{\vskip.1in}
\newcommand{\n}{\noindent}
\newcommand{\be}{\begin{enumerate}}
\newcommand{\ee}{\end{enumerate}}
\newcommand{\op}{\operatorname}
\newcommand{\del}{\partial} 
\newcommand{\pd}[2]{\frac{\del #1}{\del #2}}
\newcommand{\cb}{\color{black}}
\newcommand{\cbu}{\color{black}}
\newcommand{\cpp}{\color{black}}
\newcommand{\ind}{\text{ind}}
\newcommand*{\da@rightarrow}{\mathchar"0\hexnumber@\symAMSa 4B }
\newcommand*{\da@leftarrow}{\mathchar"0\hexnumber@\symAMSa 4C }
\newcommand*{\xdashrightarrow}[2][]{%
  \mathrel{%
    \mathpalette{\da@xarrow{#1}{#2}{}\da@rightarrow{\,}{}}{}%
  }%
}
\newcommand{\xdashleftarrow}[2][]{%
  \mathrel{%
    \mathpalette{\da@xarrow{#1}{#2}\da@leftarrow{}{}{\,}}{}%
  }%
}
\newcommand*{\da@xarrow}[7]{%
  \sbox0{$\ifx#7\scriptstyle\scriptscriptstyle\else\scriptstyle\fi#5#1#6\m@th$}%
  \sbox2{$\ifx#7\scriptstyle\scriptscriptstyle\else\scriptstyle\fi#5#2#6\m@th$}%
  \sbox4{$#7\dabar@\m@th$}%
  \dimen@=\wd0 %
  \ifdim\wd2 >\dimen@
    \dimen@=\wd2 %
  \fi
  \count@=2 %
  \def\da@bars{\dabar@\dabar@}%
  \@whiledim\count@\wd4<\dimen@\do{%
    \advance\count@\@ne
    \expandafter\def\expandafter\da@bars\expandafter{%
      \da@bars
      \dabar@
    }%
  }%
  \mathrel{#3}%
  \mathrel{%
    \mathop{\da@bars}\limits
    \ifx\\#1\\%
    \else
      _{\copy0}%
    \fi
    \ifx\\#2\\%
    \else
      ^{\copy2}%
    \fi
  }%
  \mathrel{#4}%
}
\newcommand{\sett}[1]{\{\,#1\,\}}
\begin{document}

\title{Immersed Lagrangian Floer cohomology via pearly trajectories}
\author{Garrett Alston}
\author{Erkao Bao}
\address{Erkao Bao, School of Mathematics, University of Minnesota, 127 Vincent Hall, 206 Church St. SE, Minneapolis, MN 55455}
\maketitle
\begin{abstract}
We define Lagrangian Floer cohomology \cbu over $\Z_2$-coefficients \cb by counting {\em pearly trajectories} for graded, exact Lagrangian immersions that satisfy a certain positivity condition on the index of the non-embedded points,
and show that it is an invariant of the Lagrangian immersion under Hamiltonian
deformations. We also show that it is naturally isomorphic to the Hamiltonian perturbed version
of Lagrangian Floer cohomology as defined in \cite{alston2018exact}. 
As an application, we prove that the number of non-embedded points of such a Lagrangian in $\C^n$ is no less than the sum of its Betti numbers.
\end{abstract}


\section{Introduction}
\label{sec:introduction}
Immersed Lagrangian Floer theory was first studied by Akaho in \cite{MR2155230} with a topological condition (second relative homotopy group vanishes) to preclude disc bubbling.
Later, Akaho and Joyce developed the theory in complete generality in \cite{MR2785840}, using the method of Kuranishi structures introduced in \cite{fukaya1999arnold, fooo} to deal with disc bubbling.
In  \cite{alston2018exact}, the authors developed a Floer theory for graded, exact, Lagrangian immersions that satisfy a certain positivity condition.
The theory is based on the Hamiltonian perturbation approach: the chain complex is generated by Hamiltonian chords, and the differential counts Hamiltonian perturbed holomorphic strips.
The positivity condition a priori eliminates the disc bubbles that cannot be easily handled by perturbing the almost complex structures.

The goal of this paper is to define a pearly version of Floer cohomology for a graded, exact, Lagrangian immersion $\iota:L\to M$ that satisfies a stronger positivity condition.
We'll explain in Section~\ref{subsection: disc bubble} (especially Claim~\ref{claim: constant strip index}) why the stronger positivity condition is needed.
The chain complex is generated by the critical points of a Morse function on $L$ and elements of $$R:=\left\{ (p,q)\in L\times L|\iota(p)=\iota(q),p\neq q\right\}.$$
The differential counts pearly trajectories, which are mixtures of Morse gradient trajectories and holomorphic strips. 
Pearly trajectories were first introduced by Oh \cite{oh1996relative} and were later studied intensively by Biran and Cornea \cite{MR2555932, biran4221quantum} in the realm of embedded Lagrangians. 
The pearly version of Lagrangian Floer cohomology is more computable compared to the Hamiltonian perturbed version or the Kuranishi perturbed version (See Section~\ref{example} and \cite{2013arXiv1311.2327A}).

\subsection*{Setup}
Let $(M^{2n},\omega,\sigma,\Omega,J_M)$ be an exact graded
compact symplectic manifold with boundary.
Here $\omega=d\sigma$ is a symplectic form, $J_M$ is an almost complex structure that is compatible
with $\omega$, and $\Omega$ is
a nowhere vanishing section of $\Lambda^n_{\C}(T^*M, J_M)$. 
In fact, we only require that $\Omega$ to be defined up to a $\pm$ sign.
Following Section (7a) of \cite{seidel-fcpclt} we require $J_M$ to be convex near $\partial M$,
which is the standard way to study $J_M$-holomorphic
curves contained in $M$. 

Let $L^n$ be a smooth closed manifold,
and $\iota:L\to M$ be an exact Lagrangian immersion with transverse double points as the only non-embedded points.
Here, ``exact Lagrangian immersion'' means that there exists a function $h:L\to\mathbb{R}$ such that $\iota^{*}\sigma=dh$,
and ``transverse'' means,
\begin{displaymath}
d\iota(T_{p}L)+ d\iota(T_{q}L)=T_{\iota(p)}M,\quad\text{for all }(p,q)\in R.
\end{displaymath}
We also assume that $\iota$ is graded.
Namely, there exists a function \cbu
$\theta:L\to \R$ \cb such that $e^{2\pi i\theta}=\mathrm{Det}^{2},$ where
$\mathrm{Det}^{2}:L\to S^{1}$ is the phase function defined by $\mathrm{Det}^{2}=\iota^*(\Omega^{\otimes 2}/|\Omega|^{2}).$

\begin{defn}
  \label{dfn:action}
  The \textit{action} $\mathcal A(p,q)$ of $(p,q)\in R$ is defined to be
  \begin{displaymath}
    \mathcal A(p,q) = h(q)-h(p).
  \end{displaymath}
\end{defn}

\begin{rmk}
  \label{rmk:disc with branch jump}
  The action has the following important application: Let $D^2$ be the unit disc and $u:D^2\to M$ be a map that is continuous on all of $D^2$ and smooth everywhere except at the point $1\in\partial D^2$.
  Assume that there exists a map $\ell:[0,2\pi]\to L$ that lifts $u|_{\partial D^2}$ in the sense that $\iota(\ell(t)) = u(e^{it})$.
  Assume furthermore that $(p,q):=(\ell(0),\ell(2\pi))\in R$.
  See Figure \ref{fig:disc with branch jump}(a).
  We say that $u$ has a \textit{branch jump of type $(p,q)$}.
  Then
  \begin{displaymath}
    \int_{D^2} u^*\omega = \mathcal A(p,q).
  \end{displaymath}
  In particular, if $u$ is a $J_M$-holomorphic disc, then the symplectic area of $u$ is equal to the action $\mathcal A(p,q)$.
  In fact, if $u$ is a tree of holomorphic discs such that the boundary around the entire tree has a lift $\ell$ with a single branch jump of type $(p,q)$, then the symplectic area of the entire tree is again $\mathcal A(p,q)$.
  See Figure \ref{fig:disc with branch jump}(b).
\end{rmk}
\begin{rmk}
  In Remark \ref{rmk:disc with branch jump}, to use the terminology of \cite{alston2018exact}, we are considering $1\in\partial D^2$ an incoming marked point.
  If we viewed $1$ as an outgoing marked point, we would say it has a branch jump of type $(q,p)$.
  Below, when we consider holomorphic strips with marked points on the top and bottom boundaries, we will view the marked points as outgoing. 
  \cbu
  The basic fact is that an outgoing point of type $(p,q)$ can be attached to an incoming point of type $(p,q)$.
  \cb
  See Figure \ref{fig:branch jumps at node}.
\end{rmk}

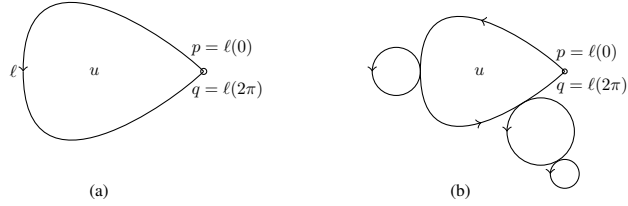
\begin{figure} 
\centering
\begin{tikzpicture}[scale = 0.8, every node/.style={scale=0.6}]
\draw[->] (2,0) .. controls (1.5, 0.5) and (-1, 2.3) .. (-1,0);
\draw (2,0) .. controls (1.5, -0.5) and (-1, -2.3) .. (-1,0);
\draw[] (2,0) circle [radius = 0.05];
\node[left] at (-1,0) {$\ell$};
\node[right] at (0, 0) {$u$};
\node[right] at (1.7, 0.4) {$p = \ell(0)$};
\node[right] at (1.7, -0.3) {$q = \ell(2\pi)$};

\usetikzlibrary{decorations.markings}
\begin{scope}[scale = 0.8, decoration={
    markings,
    mark=at position 0.5 with {\arrow{>}}}
    ] 
    \draw[] (10,0) circle [radius = 0.05];
    \node[right] at (8, 0) {$u$};
    \node[right] at (9.7, 0.4) {$p = \ell(0)$};
    \node[right] at (9.7, -0.3) {$q = \ell(2\pi)$};
    \draw[postaction={decorate}] (10,0) .. controls (9.5, 0.5) and (7, 2.3) .. (7,0);
    \draw[postaction={decorate}] (7,0) .. controls (7, -2.3) and (9.5, -0.5) .. (10,0);
    \draw[postaction={decorate}] (6.5,0) circle [radius = 0.5];
    \draw[postaction={decorate}] (9.5,-1.25) circle [radius = 0.7];    
    \draw[postaction={decorate}] (10,-2.13) circle [radius = 0.3];    
\end{scope}
\node[right] at (0,-2) {(a)};
\node[right] at (6,-2) {(b)};
\end{tikzpicture}
\caption[]{(a) Disc with a branch jump. (b) Tree of discs with a branch jump.}
\label{fig:disc with branch jump}
\end{figure}

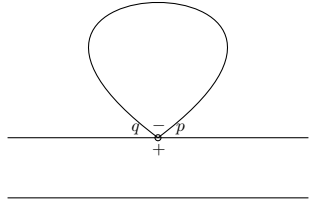
\begin{figure} 
\centering
\begin{tikzpicture}[scale = 0.8, every node/.style={scale=0.6}]
\draw (-2.5,0) -- (2.5,0);
\draw (-2.5, -1) -- (2.5, -1);
\draw (0,0) .. controls (-4, 3) and (4,3) .. (0,0);
\node[left] at (-0.2,0.15) {$q$};
\node[right] at (0.2,0.15) {$p$};
\node[right] at (-0.2,0.2) {$-$};
\node[right] at (-0.2,-0.2) {$+$};
\draw (0,0) circle [radius = 0.05];

\end{tikzpicture}
\caption[]{Nodal point of holomorphic curve indicating matching branch jumps. ``-'': incoming side; ``+'': outgoing side.}
\label{fig:branch jumps at node}
\end{figure}

\begin{defn}
  \label{defn:branch jump index}
  The \textit{index} $\ind(p,q)$ of $(p,q)\in R$ is defined to be
  \begin{equation}
    \label{index of p,q}
    \ind (p,q)=n+\theta(q)-\theta(p)-2\text{Angle}(d\iota(T_{p}L),d\iota(T_{q}L)).
  \end{equation}
Here $\text{Angle}(d\iota(T_{p}L),d\iota(T_{q}L))$ is the K\"ahler angle
between $d\iota(T_{p}L)$ and $d\iota(T_{q}L)$, which is defined as follows:
choose a unitary basis $(u_1,..., u_n)$ of $d\iota(T_{p}L)$ such that $d\iota(T_qL) = \text{span}_{\R} \{e^{2\pi i \alpha_1}u_1,..., e^{2\pi i \alpha_n}u_n\}$ with $\alpha_i\in (0,1/2)$.
Then
\begin{displaymath}
  \text{Angle}(d\iota(T_{p}L),d\iota(T_{q}L)) = \alpha_1 + \cdots + \alpha_n.
\end{displaymath}
It is easy to check that $\ind(p,q) = n - \ind(q,p)$.
\end{defn}

\subsection*{Main results}
Before stating the main results of this paper, we review one of the main results of  \cite{alston2018exact}.
There, the authors study $HF_H^*(\iota)$ the Hamiltonian perturbation version of Floer cohomology of a Lagrangian immersion $\iota$.
Its cochain complex $CF_H^*(\iota)$ is the free $\Z_2$-module generated by the Hamiltonian chords of a generic time-dependent Hamiltonian $H:[0,1]\times M\to\mathbb{R}$ with end points in $\iota(L)$, and the differential $d_H$ counts inhomogeneous holomorphic strips connecting Hamiltonian chords.
The strips $u$ satisfy the equation
\begin{equation}
  \label{eq:1}
  \frac{\partial u}{\partial s} + J(u)\left(\frac{\partial u}{\partial t} - X_{H}(u)\right) = 0.
\end{equation}
Here $s+it$ are the natural complex coordinates on the strip $\mathbb{R}\times[0,1]$ and $X_{H}$ is the Hamiltonian vector of $H$,
and $J$ is a generic $t$-dependent compatible almost complex structure on $M$ that equals $J_M$ near the end of $M$.
In order to rule out problematic disc bubbles, a positivity condition is assumed.
We will refer to this condition as weak positivity:
\begin{condition}[weak positivity]
If $(p,q)\in R$ and $\mathcal A(p,q)>0$, then $\ind(p,q)\geq3.$
\end{condition}

This condition implies that if a holomorphic disc bubbles off from a sequence of strips with a fixed Maslov index, then the strip component in the limit must have Maslov index at least 3 less.
See Figure \ref{fig:bubble under weak positivity}.
Along with a transversality result asserting that strips with Maslov index less than 1 or 0 (depending on the context) do not exist, this implies that a limit of the sequence of strips cannot have disc bubbles.
This is the main ingredient in proof of the following theorem:
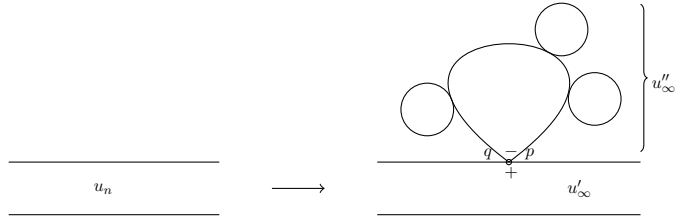
\begin{figure} 
\centering
\begin{tikzpicture}[scale = 0.7, every node/.style={scale=0.6}]
\draw (-9.5,0) -- (-5.5,0);
\node[right] at (-8, -0.5) {$u_n$};
\node[right] at (1, -0.5) {$u_{\infty}'$};
\draw (-9.5, -1) -- (-5.5, -1);
\draw (-2.5,0) -- (2.5,0);
\draw[->]  (-4.5, -0.5) -- (-3.5, -0.5);
\draw (-2.5, -1) -- (2.5, -1);
\draw (0,0) .. controls (-4, 3) and (4,3) .. (0,0);
\node[left] at (-0.2,0.15) {$q$};
\node[right] at (0.2,0.15) {$p$};
\node[right] at (-0.2,0.2) {$-$};
\node[right] at (-0.2,-0.2) {$+$};
\node[right] at (2.6, 1.5) {$u_\infty ''$};
\draw (0,0) circle [radius = 0.05];
\draw (-1.55,1) circle [radius = 0.5];
\draw (1.63,1.2) circle [radius = 0.5];
\draw (1, 2.52) circle [radius = 0.5];
\draw[decoration={brace,mirror},decorate]
  (2.5,0.2) -- (2.5,3) ;
\end{tikzpicture}
\caption[]{Nodal point of holomorphic curve indicating matching branch jumps. $\mu(u_n) = K$ for all $n$, $\mu(u_\infty') \leq K-3$ and $\mu(u_\infty'') \geq 3$.}
\label{fig:bubble under weak positivity}
\end{figure}

\begin{thm}[Theorem 1.4 in \cite{alston2018exact}]
  \label{thm:hamiltonian hf}
Under the weak positivity condition,
the differential $d_H$ is well-defined, and satisfies $d_H^2 = 0$. 
Moreover, the immersed Lagrangian Floer cohomology defined by $HF_H^*(\iota) := \ker d_H / \op{im}{d_H}$ is an invariant of $\iota$ under Hamiltonian isotopy.
\end{thm}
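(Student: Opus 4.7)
The plan is to follow the standard strategy for constructing a Floer cohomology and proving its invariance, with the immersed setting forcing us to carefully analyze disc bubbles along the boundary and to exploit the weak positivity condition to exclude them.

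First I would set up the moduli spaces $\mathcal{M}(x,y)$ of solutions to equation \eqref{eq:1} connecting two Hamiltonian chords $x$ and $y$, allowing finitely many branch jumps of prescribed types $(p_i,q_i)\in R$ on either boundary component, and stratify by Maslov index $\mu$. Standard Fredholm theory (adapted to allow Lagrangian boundary conditions with branch jumps as in Akaho--Joyce \cite{MR2785840}) gives that the linearized operator has index equal to $\mu$, and a Sard--Smale argument over the space of admissible pairs $(H,J)$ yields, for generic choice, that each $\mathcal{M}(x,y)$ is a smooth manifold of the expected dimension. I would in particular ensure transversality holds simultaneously for strips with no branch jumps and for those with several, so that no unexpectedly low-index stratum is populated.

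Next comes the compactness analysis. Gromov compactness applied to a sequence $u_n\in\mathcal{M}(x,y)$ of uniformly bounded energy yields a limit consisting of a broken strip with possibly attached sphere bubbles and disc bubbles. Sphere bubbles are ruled out in the usual way by exactness (images have uniformly bounded symplectic area and $\omega$ is exact). Strip breaking at chord ends is the desired codimension-one boundary phenomenon. The essential issue is disc bubbling at boundary points, including bubbles at branch-jump points, illustrated in Figure \ref{fig:bubble under weak positivity}. Here is where I invoke weak positivity: by Remark \ref{rmk:disc with branch jump}, a holomorphic disc (or tree of discs) bubble has symplectic area equal to the action $\mathcal A(p,q)$ of the branch jump at its attaching point; since this area is positive, the condition forces $\op{ind}(p,q)\geq 3$, which by the index additivity at the node (together with $\op{ind}(p,q)+\op{ind}(q,p)=n$ and the Maslov index formula for the disc) translates to the surviving strip component $u_\infty'$ losing at least $3$ from the Maslov index of $u_n$. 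Combined with transversality, which forbids nontrivial strips of Maslov index $\leq 0$, this rules out disc bubbles in moduli spaces of Maslov index $1$ or $2$.

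With compactness and transversality in hand, I define $d_H$ as the signed (here $\Z_2$) count of rigid elements of the zero-dimensional $\mathcal{M}(x,y)$, and verify $d_H^2=0$ by identifying $\partial \overline{\mathcal{M}}(x,y)$ in the Maslov-index-$2$ case with the union of once-broken configurations, which is precisely the matrix coefficient of $d_H^2$. For invariance under a Hamiltonian isotopy deforming $(H_0,J_0)$ to $(H_1,J_1)$, I would use the standard continuation construction: count solutions to an $s$-dependent version of \eqref{eq:1} with a homotopy $(H_s,J_s)$ to build a chain map $\Phi$, then concatenate homotopies and count in a one-parameter family to construct a chain homotopy between $\Phi_{10}\Phi_{01}$ and the identity, establishing that the induced map on cohomology is an isomorphism. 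The same compactness analysis applies, and weak positivity again excludes disc bubbles in the parametrized moduli spaces, so the only boundary contributions are the expected chain map/chain homotopy identities.

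The main obstacle is the disc-bubble analysis at branch-jump points on the boundary of strips: once we are confident that the index drop forced by a bubble is at least $3$, everything else is a matter of extending the embedded Lagrangian Floer machinery. This is exactly the role played by Condition (weak positivity), and every step above reduces, at some point, to invoking it via Remark \ref{rmk:disc with branch jump}.
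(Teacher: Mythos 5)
Your outline matches the strategy this paper attributes to the proof (the theorem is quoted from \cite{alston2018exact}, with only the key ingredient sketched here): weak positivity plus the area identity of Remark \ref{rmk:disc with branch jump} forces any disc bubble at a branch jump to drop the index of the surviving strip by at least $3$, which together with generic transversality excludes bubbling from the index-$1$ and index-$2$ moduli spaces, leaving only strip breaking; $d_H^2=0$ and invariance then follow by the standard boundary and continuation arguments. This is essentially the same approach, so no further comment is needed.
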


We now turn to the main results in this paper.
First, we define a pearly version of the immersed Lagrangian Floer cohomology $HF^*(\iota)$. 
Roughly speaking, the cochain complex $CF^*(\iota)$ is the free $\Z_2$-module generated by $R$ and the critical points of a Morse function and the differential $d$ counts trajectories of index $1$ that are made of combinations of holomorphic strips and Morse gradient trajectories connecting the generators of $CF^*(\iota)$.
As before, we need to impose a positivity condition to ensure that $d^2=0$ and various ingredients are well-defined.
In order to avoid complicated perturbation techniques such as Kuranishi structures and to keep things as simple as possible, we impose a stronger condition than before:
\begin{condition}[strong positivity]
\label{strong positivity}
If $(p,q)\in R$ and $\mathcal A(p,q)>0$ then $\ind (p,q)\geq\max\{\frac{n+2}{2},3\}$.
\end{condition}
The strong positivity condition is used to rule out certain bad disc bubbles in degenerations of pearly trajectories.
These bad degenerations are unique to pearly trajectories and do not occur for the strips used to define the Hamiltonian version of Lagrangian Floer cohomology in Theorem \ref{thm:hamiltonian hf}.
See Section~\ref{sec:moduli-spaces} for more details.

\begin{thm}[Lemmas \ref{prop:d-is-defined.} and \ref{d square equals 0}  and Theorem \ref{thm: iso}]
Under the strong positivity condition, the pearly chain complex $(CF^*(\iota),d)$ is a chain complex ($d$ is well-defined and $d^2=0$).
Moreover, its cohomology $HF^*(\iota)$ is naturally isomorphic to the Hamiltonian version of Lagrangian Floer cohomology $HF_H^*(\iota)$ from Theorem \ref{thm:hamiltonian hf}. In particular, $HF^*(\iota)$ is an invariant of $\iota$ under Hamiltonian isotopy.
\end{thm}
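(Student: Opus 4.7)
The plan is to follow the standard Biran--Cornea pearl complex strategy, with the strong positivity condition replacing the virtual perturbation technology that would otherwise be needed to control disc bubbling. After fixing a generic Morse function and metric on $L$, a generic almost complex structure, and perturbation data at the nodes, I would set up the moduli space $\mathcal M_k$ of index-$k$ pearly trajectories between two fixed generators: alternating strings of Morse gradient segments on $L$ and non-constant $J$-holomorphic discs with boundary on $\iota(L)$, possibly with branch jumps at the generators in $R$. Standard transversality arguments then make $\mathcal M_k$ a smooth manifold of dimension $k$.

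Well-definedness of $d$ amounts to showing $\mathcal M_0$ is compact. The candidate sources of noncompactness are Morse trajectory breaking, strip breaking inside a pearl, collision of two adjacent pearls (a Morse segment shrinking to zero length), and disc bubbling. The first two are ruled out in expected dimension $0$ by the usual index arithmetic. The last two produce a configuration containing a holomorphic disc with a branch jump $(p,q)$ of positive action. By strong positivity, $\ind(p,q)\geq\max\{(n+2)/2,3\}$, and combined with the two point-evaluation constraints of total dimension $n$ coming from the adjacent Morse segments, this index cost pushes the codimension of the stratum strictly above $0$, so it does not contribute to $\partial\mathcal M_0$. Applying the same analysis to $\mathcal M_1$ yields a compactification whose $1$-dimensional boundary consists exactly of concatenations of two index-$0$ pearly trajectories, giving $d^{2}=0$ modulo $2$.

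To compare with $HF_H^*(\iota)$, I would construct a PSS-type chain map $\Phi: CF^*(\iota)\to CF_H^*(\iota)$ by counting hybrid trajectories whose input is a half-infinite pearly string and whose output is a Hamiltonian-perturbed strip terminating at a Hamiltonian chord, with $H$ turned on via a bump profile along the strip. A dual construction in the reverse direction, together with two $1$-parameter family chain homotopies interpolating the perturbation data, identifies $\Phi$ as a quasi-isomorphism. Compactness and transversality for these hybrid moduli spaces proceed exactly as in the pure pearly case, with the same $(n+2)/2$ bound ruling out every disc bubble. Hamiltonian invariance of $HF^*(\iota)$ is then a formal consequence of the isomorphism together with Theorem~\ref{thm:hamiltonian hf}.

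The main obstacle is the compactness analysis of the pearly and hybrid moduli spaces: pearly configurations admit degeneration strata (notably pearl collisions where a holomorphic disc with a branch jump meets a Morse critical point) that have no analogue in the strip-based Hamiltonian theory, and one must verify that the strong positivity index bound, together with the dimension cost of the incoming and outgoing evaluation constraints on each bubble disc, is sharp enough to place every such bad stratum in codimension at least $2$ in both $\mathcal M_0$ and $\mathcal M_1$. Making this bookkeeping rigorous, and carrying the same control over to the PSS moduli and their homotopies, is where essentially all of the work lives.
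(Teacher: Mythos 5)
Your proposal follows essentially the same route as the paper: a Biran--Cornea-style pearl complex whose bad degenerations (ghost/constant strips with disc bubbles, breaking at embedded points, pearl collisions) are excluded by the index arithmetic $2\cdot\frac{n+2}{2}-n=2$ supplied by strong positivity, followed by a PSS-type comparison map (the paper's goes from $CF_H^*(\iota)$ to $CF^*(\iota)$, but the direction is immaterial) whose composition with its reverse is shown to be homotopic to the identity by gluing and homotoping the Hamiltonian to zero. The outline is correct and matches Claims~\ref{index of gamma and index of u}--\ref{RxR} and the proofs of Lemma~\ref{prop:d-is-defined.}, Proposition~\ref{d square equals 0}, and Theorem~\ref{thm: iso}.
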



The following corollaries should be viewed within the context of a result in \cite{MR2155230}: if $\pi_2(M, \iota(L)) = 0$, then $HF^*(\iota) \cong H^*(L; \Z_2) \oplus \Z_2R$.

\begin{cor} \label{rank inequality}
Suppose that $\iota$ is an exact, graded Lagrangian immersion that satisfies the strong positivity condition and whose non-embedded points are transverse and double.
The number of non-embedded points satisfies 
\begin{displaymath}
  |R|  \geq \sum_i \op{rank} H^i(L; \Z_2) - \sum_i \op{rank} HF^i(\iota).  
\end{displaymath}
\end{cor}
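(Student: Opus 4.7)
The plan is to set up an action filtration on $CF^*(\iota)$ in which the Morse complex of $L$ appears as a subquotient, and then to read off the required inequality from the long exact sequences of two short exact sequences of complexes. First I would assign to each critical point of the Morse function $f$ the action $0$ and to each $(p,q)\in R$ the action $\mathcal{A}(p,q)=h(q)-h(p)$ from Definition~\ref{dfn:action}. After a small perturbation of $h$ we may assume that no element of $R$ has action zero, and we split $R=R_+\sqcup R_-$ according to the sign of $\mathcal{A}$; since $(p,q)\mapsto(q,p)$ is an involution exchanging $R_+$ and $R_-$, one has $|R_+|=|R_-|=|R|/2$.

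The key technical input I would establish is an action-monotonicity for the pearly differential: for any pearly trajectory contributing to $\langle d x_-,x_+\rangle$, the total symplectic area equals, up to a sign fixed by orientation conventions, $\mathcal{A}(x_-)-\mathcal{A}(x_+)$. This follows by iterating Remark~\ref{rmk:disc with branch jump} over the constituent discs and invoking the pairwise cancellation of internal branch jumps at the nodes (Figure~\ref{fig:branch jumps at node}). Nonnegativity of area then yields the monotonicity. Two consequences I would draw from it: first, $\Z_2 R_+$ and $CM^*(f)\oplus\Z_2 R_+$ are subcomplexes of $(CF^*(\iota),d)$; second, the induced differential on $(CM^*(f)\oplus\Z_2 R_+)/\Z_2 R_+\cong CM^*(f)$ coincides with the Morse coboundary $\partial_M$. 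The latter holds because a Morse-to-Morse pearly trajectory has both endpoints of action $0$, which forces the total symplectic area to vanish, eliminating all nonconstant disc components and leaving a pure Morse gradient trajectory.

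With these subcomplexes in hand the proof concludes by standard homological algebra over $\Z_2$. The long exact sequence of
\begin{displaymath}
0\to \Z_2 R_+\to CM^*(f)\oplus\Z_2 R_+\to CM^*(f)\to 0
\end{displaymath}
gives $\sum_i\op{rank} H^i(L;\Z_2)\le \sum_i\op{rank} H^i(CM^*(f)\oplus\Z_2 R_+)+|R_+|$, and the long exact sequence of
\begin{displaymath}
0\to CM^*(f)\oplus\Z_2 R_+\to CF^*(\iota)\to \Z_2 R_-\to 0
\end{displaymath}
gives $\sum_i\op{rank} H^i(CM^*(f)\oplus\Z_2 R_+)\le \sum_i\op{rank} HF^i(\iota)+|R_-|$. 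Adding these two estimates and using $|R_+|+|R_-|=|R|$ produces the required inequality. The hard part of the argument will be the action-monotonicity claim itself: it requires a careful extension of Remark~\ref{rmk:disc with branch jump} to pearly trees whose external boundary carries a distinguished branch jump at each endpoint generator, together with a rigorous accounting of how internal branch jumps cancel in pairs at the nodes. Once that is in place, the rest is bookkeeping with the long exact sequences of chain complexes.
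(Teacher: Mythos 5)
Your proposal is correct and is essentially the paper's argument: the paper filters $CF^*(\iota)$ by the same action-based decomposition $\bold R^-\,/\,\bold C\,/\,\bold R^+$ (with monotonicity of the differential coming from the nonnegativity of energy together with $\mathcal A(\alpha(j))>0$ for attached disc trees, so your ``hard part'' is already supplied by the energy identities of Section~\ref{sec:moduli-spaces}) and runs the associated spectral sequence, which your two long exact sequences simply unroll. The only cosmetic caveat is that $h$ is determined by $\iota$ up to a constant and cannot be perturbed, but action-zero double points can just be assigned to $R_+$ as in the paper, and your argument never actually uses $|R_+|=|R_-|$.
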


\begin{cor}
Suppose that $\iota$ is an exact, graded Lagrangian immersion in $\C^n$ that satisfies the strong positivity condition and whose immersed points are transverse and double. 
Then the number of immersed points satisfies 
\begin{displaymath}
  |R| \geq \sum_i \op{rank} H^i(L; \Z_2).  
\end{displaymath}
\end{cor}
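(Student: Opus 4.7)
The plan is to reduce this corollary to the preceding Corollary \ref{rank inequality} by establishing that $HF^*(\iota) = 0$ whenever the ambient symplectic manifold is $\C^n$. Once this vanishing is in hand, the inequality in Corollary \ref{rank inequality} collapses to exactly the statement we want.

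First I would embed the situation into the paper's framework, which requires a compact symplectic manifold with convex boundary. Since $\iota(L)$ is compact, I would pick a ball $B(0,R) \subset \C^n$ of sufficiently large radius to contain $\iota(L)$ in its interior, and take $M = B(0,R)$ equipped with the standard data $(\omega_{\text{std}}, \sigma_{\text{std}}, dz_1 \wedge \cdots \wedge dz_n, J_{\text{std}})$. Round spheres in $\C^n$ are convex with respect to $J_{\text{std}}$, so this gives a valid setup for the paper's Floer theory, and $\iota:L \to M$ remains exact and graded.

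Next I would exploit displaceability in $\C^n$. A linear Hamiltonian of the form $H_0(x+iy) = c\, y_1$ generates translation in the $x_1$-direction, and for $|c|$ sufficiently large its time-$1$ flow moves $\iota(L)$ off itself. Enlarging $R$ if needed and multiplying by a cutoff supported in $B(0,R)$ that vanishes near $\partial B(0,R)$, I obtain a Hamiltonian $H$ on $M$ compatible with the paper's conventions whose time-$1$ flow still displaces $\iota(L)$. For this $H$ there are no Hamiltonian chords with endpoints on $\iota(L)$, because chords correspond bijectively to intersection points of $\iota(L)$ with $\phi_H^1(\iota(L))$, which is empty. Hence $CF_H^*(\iota)=0$ and therefore $HF_H^*(\iota)=0$. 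By the theorem stated just before the corollaries, $HF^*(\iota) \cong HF_H^*(\iota) = 0$, and plugging this into Corollary \ref{rank inequality} yields the desired inequality.

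The main obstacle is making sure all the technical ingredients are simultaneously satisfied: the chosen $H$ must meet the genericity hypotheses used in defining $HF_H^*$, and the Hamiltonian isotopy that displaces $\iota(L)$ must happen inside the compact piece $M$ that frames the Floer theory. The first concern is vacuous, since transversality of Hamiltonian chords is automatic when there are no chords at all. The second is dealt with by enlarging the ball before applying the cutoff, so that the support of $H$ and the entire displacement trajectory lie well inside $M$. Finally, one should note that the isomorphism $HF^*(\iota)\cong HF_H^*(\iota)$ must hold for this specific $H$ and not only for an a priori fixed auxiliary Hamiltonian; this follows from the fact, built into the definition of $HF_H^*$, that the cohomology is independent of the generic choice of $H$ via the standard continuation argument.
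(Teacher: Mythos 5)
Your proposal is correct and follows the route the paper intends (the paper leaves this corollary's proof implicit, but the displaceability argument is exactly what is meant, as confirmed by the remark in the examples section that the immersed sphere in $\C^n$ "is Hamiltonian displaceable"): displace $\iota(L)$ by a compactly supported linear-type Hamiltonian inside a large ball, conclude $CF_H^*(\iota)=0$ and hence $HF^*(\iota)\cong HF_H^*(\iota)=0$, and feed this into Corollary \ref{rank inequality}. Your additional care about compactifying to a ball with convex boundary and cutting off the Hamiltonian is exactly the right bookkeeping and introduces no gap.
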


\subsection*{Guide to the rest of the paper}
In Section \ref{sec:moduli-spaces} we define the moduli spaces needed to define the differential $d$ in the pearly chain complex $(CF^*(\iota),d)$.
We use the strong positivity Condition \ref{strong positivity} to prove some index inequalities that is used in Section \ref{lagrangian floer via pearly trajectories} to prove that $d$ is well-defined and $d^2=0$.
As mentioned above, $d$ counts index $1$ trajectories made of holomorphic strips and Morse trajectories that connect generators of $CF^*(\iota)$.

To begin with, we consider the five types of trajectories pictured in Figure \ref{fig:four types of trajectories}.
\begin{figure} 
\centering
\begin{tikzpicture}[scale = 0.7, every node/.style={scale=0.6}]

\draw[->] (0,9) -- (2.5,9);
\draw (2.5,9) -- (5,9);
\filldraw (0,9) circle [radius = 0.05];
\filldraw (5,9) circle [radius = 0.05];
\node[left] at (-0.2,9) {$x$};
\node[right] at (5.2, 9) {$y$};
\node[] at (7,9) {(a)};

\draw[->] (0,1+6) -- (2,1+6);
\draw (2,1+6) -- (3,1+6);
\draw (3, 1+6) .. controls (3, 2+6) and (4.5,1.5+6) .. (5, 1+6)
(3,1+6) .. controls (3,0+6) and (4.5, 0.5+6) .. (5,1+6);
\node[right] at (5,1+6+0.3) {$q$};
\node[right] at (5,1+6-0.3) {$p$};
\node[left] at (5,1+6) {$+$};
\node[left] at (-0.2,1+6) {$x$};
\draw (5, 1+6) circle [radius = 0.05];
\filldraw (0,1+6) circle [radius = 0.05];
\node[] at (7,7) {(b)};

\draw[->] (2,1+4) -- (4,1+4);
\draw (4,1+4) -- (5,1+4);
\draw (0, 1+4) .. controls (0.5, 1.5+4) and (2,2+4) .. (2, 1+4)
(0,1+4) .. controls (0.5,0.5+4) and (2, 0+4) .. (2,1+4);
\node[right] at (5.2,1+4) {$x$};
\node[left] at (0,1+4+0.3) {$q$};
\node[left] at (0,1+4-0.3) {$p$};
\node[right] at (0,1+4) {$-$};
\draw (0, 1+4) circle [radius = 0.05];
\filldraw[black] (5,1+4) circle [radius = 0.05];
\node[] at (7,5) {(c)};

\draw (0, 1+2) .. controls (0.5, 2+2) and (3.5+1,2+2) .. (4+1, 1+2)
(0,1+2) .. controls (0.5,0+2) and (3.5+1, 0+2) .. (4+1,1+2);
\node[left] at (0,1+2+0.3) {$s$};
\node[left] at (0,1+2-0.3) {$r$};
\node[right] at (0,1+2) {$-$};
\node[right] at (4+1,1+2+0.3) {$q$};
\node[right] at (4+1,1+2-0.3) {$p$};
\node[left] at (4+1,1+2) {$+$};
\draw (0, 1+2) circle [radius = 0.05];
\draw (4+1, 1+2) circle [radius = 0.05];
\node[] at (7,3) {(d)};

\draw[->] (2,1) -- (2.5,1);
\draw (2.5,1) -- (3,1);
\draw 
(0, 1) .. controls (0.5, 1.5) and (2,2) .. (2, 1)
(0, 1) .. controls (0.5,0.5) and (2, 0) .. (2,1)
(4-1, 1) .. controls (4-1, 2) and (5.5-1,1.5) .. (6-1, 1)
(4-1, 1) .. controls (4-1,0) and (5.5-1, 0.5) .. (6-1,1);
\node[right] at (6-1,1+0.3) {$q$};
\node[right] at (6-1,1-0.3) {$p$};
\node[left] at (6-1,1) {$+$};
\node[left] at (0,1+0.3) {$s$};
\node[left] at (0,1-0.3) {$r$};
\node[right] at (0,1) {$-$};
\draw (0, 1) circle [radius = 0.05];
\draw (6-1,1) circle [radius = 0.05];
\node[] at (7,1) {(e)};
\end{tikzpicture}

\caption[]{Five types of trajectories connecting generators of $CF^*(\iota)$. The strong positivity condition actually implies that the last type does not exist (with index 1).}
\label{fig:four types of trajectories}
\end{figure}
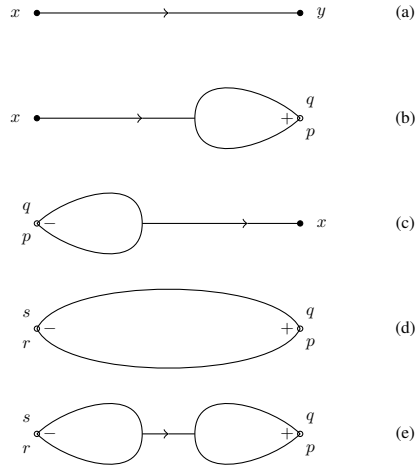
Type (a) corresponds to $\mathcal M(x,y)$ the moduli space  of Morse trajectories in $L$ from the critical point $x$ to the critical point $y$.
Type (d) corresponds to $\mathcal M_J((r,s), (p,q))$ the moduli space of $J$-holomorphic strips  from $(r,s)\in R$ to $(p,q) \in R$.
Type (b) corresponds to $ {\mathcal M}_J(x, (p,q))$ the space of pairs $(u,\ell)$ such that 
\begin{itemize}
\item $u:Z\to M$ is a $J$-holomorphic map such that $\lim _{s\to \pm \infty}u(s,t) = m_\pm$ for some $m_\pm \in M$ independent of $t$;
\item $\ell: \partial Z \to L$ is a continuous boundary lift of $u,$ i.e., $u|_{\partial Z}=\iota\circ\ell $ such that 
	\begin{itemize}
	\item  $\lim_{s\to -\infty}\ell(s,1) = \lim_{s\to -\infty}\ell(s,0) \in W^u(x)$, the unstable manifold of $x$ in $L$,
	\item $\lim_{s\to +\infty}\ell(s,1) = q$ and $\lim_{s\to +\infty}\ell(s,0) = p$.
	\end{itemize}
\item $(u,\ell)$ and $(u', \ell')$ are identified if they differ by an $\R$-translation in $Z$.
\end{itemize}
Type (c) moduli space denoted by $\mathcal M_J((p,q),x)$ can be defined similarly as the second one.
Type (e) actually does not show up in the definition of $d$ since if such a trajectory exists, the index of the trajectory equals $\ind(r,s) - \ind(p,q) \geq 2$ by strong positivity (see Remark \ref{lem:Under-strong-positivity} for details).

To show $d$ is well-defined, we prove the moduli space of $\text{index} = 1$ trajectories of the first four types in Figure \ref{fig:four types of trajectories} is a compact zero dimensional manifold. 
For compactness, one can use the Gromov's compactification to add all the possible degenerations to compactify the moduli space, and then ideally uses the transversality to show these degenerations do not exist. 
However, as usual, transversality in general is hard to achieve by simply perturbing the almost complex structure even in the exact Lagrangian immersion case. For this reason, we additionally assume the positivity conditions.
One particular type of degenerations that we use {\em strong} positivity condition to preclude are the ones that contain ``ghost'' (constant) components. 
An example of this is a constant strip from $(p,q)\in R$  to $(q,p)\in R$ with one holomorphic disc attached at each side of the strip pictured in Figure \ref{fig: ghost}.
 Using strong positivity, we have $\ind(p,q) - \ind(q,p) = 2 \ind(p,q) - n \geq 2 \cdot \frac{n+2}{2} -n = 2$.
 To show $d^2 = 0$ the additional ingredients we need are the standard gluing results.
 
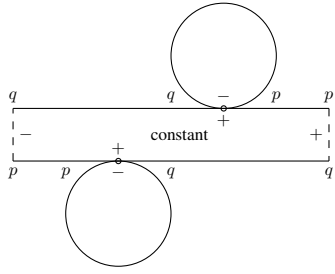
\begin{figure} 
\centering
\begin{tikzpicture}[scale = 0.7, every node/.style={scale=0.6}]
\draw (0,0.5) -- (6,0.5);
\draw[dashed] (6,0.5) -- (6, -0.5);
\draw (0,-0.5) -- (6,-0.5);
\draw[dashed](0,-0.5) -- (0,0.5);
\draw (4,1.5) circle [radius = 1];
\draw(4, 0.5) circle [radius = 0.05];
\draw(2, -1.5) circle [radius = 1];
\draw(2, -0.5) circle [radius = 0.05];
\node[right] at (2.5,0) {constant};
\node[above] at (0,0.5){$q$};
\node[above] at (3,0.5){$q$};
\node[above] at (5,0.5){$p$};
\node[above] at (6,0.5){$p$};
\node[below] at (0,-0.5){$p$};
\node[below] at (1,-0.5){$p$};
\node[below] at (3,-0.5){$q$};
\node[below] at (6,-0.5){$q$};
\node[right] at (0,0){$-$};
\node[left] at (6,0){$+$};
\node[above] at (4,0.5){$-$};
\node[below] at (4,0.5){$+$};
\node[below] at (2, -0.5){$-$};
\node[above] at (2, -0.5){$+$};
\end{tikzpicture}
\caption[]{domain of a ghost component with 2 discs attached}
\label{fig: ghost}
\end{figure}

In Section~\ref{lagrangian floer via hamiltonian perturbation} we show that the cohomology $HF^*(\iota)$ as above is isomorphic to the Hamiltonian version of Lagrangian Floer cohomology $HF^*_H(\iota)$. The isomorphism is a PSS (\cite{piunikhin1996symplectic}) type of isomorphism and is constructed via a chain map from $CF^*_H(\iota)$ to $CF^*(\iota)$ defined by counting three types of index $=0$ trajectories shown in Figure~\ref{fig: three trajectories} from the generators of $CF^*(\iota)$, which are self-intersection points $R$ and Morse critical points, to the generators of $CF^*_H(\iota)$, which are Hamiltonian chords. 
Let $\bold H := \{H_s\}_s$ be a $1$-parameter family of Hamiltonians with $\lim_{s\to -\infty}H_s = 0$ and $\lim_{s\to +\infty} H_s = H$. 
The three types of trajectories in Figure~\ref{fig: three trajectories} are the $\bold H$-perturbed version of Type (b), Type (d) and Type (e) in Figure~\ref{fig:four types of trajectories}, respectively.
In case (III) the perturbation applies only to the curve on the right-hand side.
Note that unlike the Type (e) in Figure~\ref{fig:four types of trajectories} the existence of Type (III) in Figure~\ref{fig: three trajectories} does not contradict to the strong positivity condition.


To show the chain map induces an isomorphism on homology, we construct a backward map similarly, and prove that their compositions are homotopic to the identity maps. 

In Section~\ref{example} we explicitly calculate $HF^*(\iota)$ for some exact immersed Lagrangian spheres inside the smoothing of an $A_N$ surface.

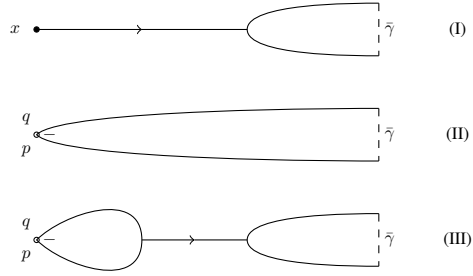
\begin{figure} 
\centering
\begin{tikzpicture}[scale = 0.7, every node/.style={scale=0.6}]
\draw[->] (0,1+4) -- (2,1+4);
\draw (2,1+4) -- (3+1,1+4);
\draw (3+1, 1+4) .. controls (3+1, 1.5+4) and (5+1,1.5+4) .. (5.5+1, 1.5+4)
(3+1,1+4) .. controls (3+1,0.5+4) and (5+1, 0.5+4) .. (5.5+1,0.5+4);
\draw[dashed] (5.5+1, 1.5+4) -- (5.5+1, 0.5+4);
\node[right] at (5.5+1,1+4) {$\bar\gamma$};
\node[left] at (-0.2,1+4) {$x$};
\node[] at (8, 5) {(I)};
\filldraw (0,1+4) circle [radius = 0.05];

\draw 
(0, 1+2) .. controls (0.5, 1.5+2) and (6,1.5+2) .. (6.5, 1.5+2)
(0, 1+2) .. controls (0.5,0.5+2) and (6, 0.5+2) .. (6.5,0.5+2);
\draw[dashed] (6.5, 1.5+2) -- (6.5, 0.5+2);
\node[right] at (6.5,1+2) {$\bar\gamma$};
\node[left] at (0,1+2.3) {$q$};
\node[left] at (0,1+1.7) {$p$};
\node[right] at (0,1+2) {$-$};
\node[] at (8,3) {(II)};
\draw (0, 1+2) circle [radius = 0.05];

\draw[->] (2,1) -- (3,1);
\draw (3,1) -- (4,1);
\draw 
(0, 1) .. controls (0.5, 1.5) and (2,2) .. (2, 1)
(0, 1) .. controls (0.5,0.5) and (2, 0) .. (2,1)
(4, 1) .. controls (4, 1.5) and (6,1.5) .. (6.5, 1.5)
(4,1) .. controls (4,0.5) and (6, 0.5) .. (6.5,0.5);
\draw[dashed] (6.5, 1.5) -- (6.5, 0.5);
\node[right] at (6.5,1) {$\bar\gamma$};
\node[left] at (0,1+0.3) {$q$};
\node[left] at (0,1-0.3) {$p$};
\node[right] at (0,1) {$-$};
\node[] at (8,1) {(III)};
\draw (0, 1) circle [radius = 0.05];
\end{tikzpicture}
\caption[]{Three trajectories that connect generators of $CF^*(\iota)$ to generators of $CF^*_H(\iota)$.}
\label{fig: three trajectories}
\end{figure}


%

\section{Moduli spaces} \label{sec:moduli-spaces}
To define $HF^{*}(\iota)$ we count  pearly trajectories that consist of a combination of Morse trajectories and homogeneous  (i.e., with no Hamiltonian perturbation) $J$-holomorphic strips.
To show that $HF^{*}(\iota)$ is isomorphic to $HF_H^{*}(\iota)$
we need the Hamiltonian perturbed $J$-holomorphic strips.
We handle these two cases together by allowing the Hamiltonian to be constantly zero.

Let $H$ be a $t$-dependent Hamiltonian functions on $M$.
Let $X_{H}$ be the Hamiltonian vector field of $H$ defined by $X_{H} \lrcorner  \omega = d H$.
Let $P_{H}$  be the set of pairs $\bar{\gamma} = (\gamma, \delta)$ where
\begin{itemize}
\item $\gamma: [0,1] \to M$ satisfies $\frac{d}{dt}\gamma (t) = X_{H}(\gamma(t))$ and $\gamma(0), \gamma(1) \in \iota(L)$,
\item $\delta: \{0,1\} \to L$ satisfies $\iota \circ \delta (i) = \gamma (i)$ for $i \in \{0,1\}$. 
\end{itemize}
Let $\phi_t^H$ be the flow of the Hamiltonian vector field $X_H$.
\begin{defn}
We say a Hamiltonian $H$ is {\em admissible} if
\begin{itemize}
\item $\phi_1^H(\iota(L))$ is transverse to $\iota(L)$, and
\item $(\phi_1^H)^{-1}(\iota(L)) \cup \phi_1^H(\iota(L)) \cap \iota(R) = \emptyset$.
\end{itemize}
\end{defn}
If $H$ is admissible, then given $\gamma$ a Hamiltonian chord of $H$, there is a unique lift $\bar\gamma \in P_H$.

{\em To combine notations, we also allow $H\equiv 0$, and in this case we require $\bar\gamma=(\gamma,\delta)$ to satisfy
$(\delta(0), \delta(1))\in R$, so $P_H$ is in one-to-one correspondence with $R$.
We remark that we $HF_{H}^{*}(\iota)$ is only defined for admissible Hamiltonians and $H\equiv 0$ is not admissible.
}

Let $J$ be a $t$-dependent compatible almost complex structure of $M,$ such that $J=J_{M}$ near the
end of $M$.
For $\bar \gamma_\pm \in P_{H}$,
we set $\widetilde{\mathcal M}_{{J}, {H}}(\bar\gamma_-, \bar\gamma_+; {\alpha})$ to be the space of tuples $(\Delta, u, \ell)$ such that 
\begin{enumerate}
\item $Z$ is the strip $\text{\ensuremath{\mathbb{R}}}\times[0,1]$
with coordinate $(s,t)$ and standard complex structure,
\item $\Delta=\{z_{1},z_{2},...,z_{k}\}\subset\mathbb{R}\times\{1\}\cup\mathbb{R}\times\{0\}$
is a set of distinct boundary marked points which we consider to be ordered in the natural way (counterclockwise around the boundary of the strip starting at $s=-\infty,t=0$),
\item $\bold{\alpha}:\{1,2,...,|\Delta|\}\to R$ is the map that specifies the type of branch jump. (We only consider outgoing marked points.)
\item map $u:Z\to M$ is continuous on $Z$ and differentiable in the interior $\mathring Z$,
\item map $\ell: \partial Z\backslash\Delta\to L$ is a continuous boundary lift of $u,$ i.e., $u|_{\partial Z\backslash\Delta}=\iota\circ\ell,$
\item for each $z_{i}\in\mathbb{R}\times\{1\},$ $\ell$ has the
branch jump of type $\bold{\alpha}(i)=(p,q)$, i.e., 
$$p=\lim_{s\to z_{i}+}\ell(s,1) \hspace{0.5cm} \text{ and } \hspace{0.5cm} q=\lim_{s\to z_{i}-}\ell(s,1);$$
 for each $z_{i}\in\mathbb{R}\times\{0\},$ $\ell$ has the
branch jump of type $\bold{\alpha}(i)=(p,q),$ i.e., 
$$p=\lim_{s\to z_{i}-}\ell(s,0) \hspace{0.5cm} \text{ and } \hspace{0.5cm} q=\lim_{s\to z_{i}+}\ell(s,0),$$
\item $u_{s}+J(u)(u_{t}-X_{H}(u))=0,$  
\item $\lim_{s\to\pm\infty}u(s,t)=\gamma_\pm(t)$ uniformly in $t$,
\item $\lim_{s\to \pm \infty} \ell(s,i) = \delta(i)$ for $i\in \{0,1\}$.
\end{enumerate}
\begin{rmk}
We say $u$ is a {\em trivial}, if $u(s,\cdot) = \gamma(\cdot)$ for all $s$.
In the case when $u$ is trivial  we require that $\Delta \neq \emptyset$.
When $\Delta = \emptyset$, we sometimes omit ${\alpha}$, and write $\widetilde{\mathcal M}_{{J}, {H}}(\bar \gamma_-, \bar\gamma_+)$. When $H \equiv 0$ we sometimes omit $H$ and write $\widetilde{\mathcal M}_{J}(\bar \gamma_-, \bar\gamma_+; \alpha)$. 
There is an $\R$-action on $\widetilde{\mathcal M}_{{J}, {H}}(\bar \gamma_-, \bar\gamma_+; \alpha)$, and we denote the quotient by ${\mathcal M}_{{J},{H}}(\bar \gamma_-, \bar \gamma_+;\alpha)$. 
\end{rmk}

When $H\equiv 0$, we also define two additional moduli spaces $\widetilde{\mathcal M}_J(\bar\gamma_-, \emptyset;\alpha)$ and $\widetilde{\mathcal M}_J(\emptyset, \bar\gamma_+;\alpha)$ by replacing the corresponding requirements in (8) and (9) with that 
$(u, \ell)$ has a removable singularity at $\{\pm \infty \} \times [0,1]$, respectively.

Let $f:L\to\mathbb{R}$ be a Morse function and $g$ a Riemannian metric on $L$. Denote by $\op{Crit}f$ be the set of critical points of 
$f$. For any $x \in \op{Crit}f$ and $\bar\gamma \in R$ we define the moduli space 
$${\mathcal M}_J(\bar\gamma, x; \alpha):= {\mathcal M}_J(\bar\gamma, \emptyset;\alpha) ~_{\op{ev}_{+\infty}} \!\times W^s(x),$$
where $W^s(x)$ is the stable manifold of $x$ (with respect to $-\nabla f$), and $\op{ev}_{+\infty}: {\mathcal M}_J(\bar\gamma, \emptyset;\alpha) \to L$ is the evaluation map at $+\infty$. See Figure~\ref{fig:four types of trajectories}(c) for the case when $\bar\gamma = (p,q)$.
Similarly, we also define the moduli space
$${\mathcal M}_J(x, \bar\gamma; \alpha):= W^u(x) \times_{\op{ev}_{-\infty}} {\mathcal M}_J(\emptyset, \bar\gamma;\alpha).$$
See Figure~\ref{fig:four types of trajectories}(b) for the case when $\bar\gamma = (p,q)$, and Figure~\ref{fig: three trajectories}(I) for the case when $\bar\gamma$ is a Hamiltonian chord of a non-zero $H$.

For the purpose of defining chain maps we also need to consider a smooth family of compatible almost complex structures $\bold J = \{J_s\}_s$ that agree with $J_M$ near the end, and a smooth family of Hamiltonian functions $\bold H = \{H_s\}_s$. 
We require that $\frac{\partial \bold J}{\partial s}$ and $\frac{\partial \bold H}{\partial s}$ vanishes when $|s|$ is sufficiently large.
We denote the corresponding moduli spaces by $\widetilde{\mathcal M}_{\bold{J}, \bold{H}}(\bar\gamma_-, \bar\gamma_+; {\alpha})$.
Note that here we allow $u$ to be a trivial map, and $\Delta = \emptyset$.
When $H_{-\infty} \equiv 0$ we can define $\widetilde{\mathcal M}_{\bold J, \bold H}(\emptyset, \bar \gamma; \alpha)$ and
\begin{equation}\label{eq:moduli space for chain map case I}
  \widetilde{{\mathcal M}}_{\bold J, \bold H}(x, \bar\gamma; \alpha):= W^u(x) \times_{\op{ev}_{-\infty}} \widetilde{\mathcal M}_{\bold J, \bold H}(\emptyset, \bar\gamma;\alpha).
\end{equation}
This is the modulie space of trajectories in case (I) in Figure \ref{fig: three trajectories}.
Similarly when $H_{+\infty} \equiv 0$, it makes sense to define
\begin{equation}\label{eq:moduli space for chain map case I opposite}
  \widetilde{{\mathcal M}}_{\bold J, \bold H}(\bar\gamma, x; \alpha):= \widetilde{\mathcal M}_{\bold J, \bold H}(\bar\gamma, \emptyset;\alpha) ~_{\op{ev}_{+\infty}} \!\times W^s(x).
\end{equation}
This is the moduli space of trajectories analogous to case (I) that will be used for the chain map in the opposite direction.
For case (II), the moduli spaces are
\begin{equation}\label{eq:moduli space for chain map case II}
  \widetilde{\mathcal M}_{\bold{J}, \bold{H}}(\bar\gamma', \bar\gamma; {\alpha}),\quad H_{-\infty}\equiv0,\quad\bar\gamma'\in R,\quad\bar\gamma\in P_{H_{+\infty}},
\end{equation}
and for the chain map in the opposite direction they are
\begin{equation}\label{eq:moduli space for chain map case II opposite}
  \widetilde{\mathcal M}_{\bold{J}, \bold{H}}(\bar\gamma, \bar\gamma'; {\alpha}),\quad H_{+\infty}\equiv0,\quad\bar\gamma'\in R,\quad\bar\gamma\in P_{H_{-\infty}}.
\end{equation}
For case (III), the moduli spaces are
\begin{multline}\label{eq:moduli space for chain map case III}
  \widetilde{\mathcal M}^{\text{pearl}}_{\bold J, \bold H}(\bar\gamma', \bar\gamma)=\{([\bar u],v)\in\mathcal M_J(\bar\gamma', \emptyset)\times\widetilde{\mathcal M}_{\bold J, \bold H}(\emptyset, \bar\gamma)\,|\\\,\varphi_L^\tau (\op{ev}_{+\infty}([\bar u])) = \op{ev}_{-\infty}(v),\text{ for some }s>0\}.
\end{multline}
Here $\varphi_L^\tau$ is the negative gradient flow of $f$. 
Similarly for the opposite chain map we have the moduli space
\begin{multline}\label{eq:moduli space for chain map case III opposite}
  \widetilde{\mathcal M}^{\text{pearl}}_{\bold J, \bold H}(\bar\gamma, \bar\gamma')=\{(v,[\bar u])\in\widetilde{\mathcal M}_{\bold J, \bold H}(\bar\gamma,\emptyset)\times\mathcal M_J(\emptyset,\bar\gamma')\,|\\\,\varphi_L^\tau (\op{ev}_{+\infty}(v)) = \op{ev}_{-\infty}(\bar u),\text{ for some }s>0\}.
\end{multline}

\subsection*{Energy}
\begin{defn}
For $\bar \gamma = (\gamma, \delta) \in P_H$, we define its action by  
$$\mathcal A_H(\bar \gamma)=-\int_{[0,1]} \left(\gamma^*\sigma+ H(\gamma(t))\right)dt-h(\delta(0))+h(\delta(1)).$$
\end{defn}
Note that when $H\equiv 0$ and $(\delta(0), \delta(1)) \in R$, we have $\mathcal A_H(\bar \gamma) = h(\delta(1)) - h(\delta(0))$,
which agrees with the definition in Section~\ref{sec:introduction}.
\begin{defn}
For $\bar u = (u,\ell) \in \widetilde{\mathcal M}_{{J}, {H}}(\bar\gamma_-, \bar\gamma_+;  {\alpha})$,
the energy is defined by $$E(u,\ell)=\int \left|\frac{\partial u}{\partial s}\right|^2ds\wedge dt=\mathcal A_{H}(\bar\gamma_-)-\mathcal A_{H}(\bar\gamma_+) - \sum_{j=1}^{|\Delta|}\mathcal A(\alpha(j)).$$
For $\bar u = (u,\ell) \in \widetilde{\mathcal M}_{\bold{J}, \bold{H}}(\bar\gamma_-, \bar\gamma_+;  \bold{\alpha})$,
the energy is defined by 
\begin{align*}
E(u,\ell) &=\int \left|\frac{\partial u}{\partial s}\right|^2ds\wedge dt \\
&=\mathcal A_{H_{-\infty}}(\bar\gamma_-)-\mathcal A_{H_{+\infty}}(\bar\gamma_+) - \sum_{j=1}^{|\Delta|}\mathcal A(\alpha(j)) - \int_Z \frac{\partial \bold H}{\partial s}(u)ds\wedge dt,
\end{align*}
where $H_{\pm \infty} := \lim_{s\to \pm \infty} H_s$.
\end{defn}
\subsection*{Index} 
\cbu
We define the index of the Hamiltonian chords, and the index of $J$-holomorphic curves.
\cb
\begin{defn}
We say $\bar \gamma = (\gamma, \delta) \in P_H$ is transverse, if $(D \phi^H_1)^{-1} D\iota \cdot T_{\delta(1)} L$ is transverse to $D\iota \cdot T_{\delta(0)}L$.
\end{defn}
\begin{defn}
For a transverse $\bar \gamma$, we define the index of $\bar\gamma$ by 
$$\ind\bar \gamma:= \mu (\{\Lambda_t\}_{t\in [0,1]}; \Lambda_0),$$ the Maslov index of the path of Lagrangian subspaces $\{\Lambda_t\}_{t\in [0,1]}$ with respect to a fixed Lagrangian $\Lambda_0$ inside $T_{\delta(0)}M$ as defined in \cite{rs-mip},
where $\Lambda_t = (D \phi^H_t)^{-1} \cdot \Lambda_t'$ and $\Lambda_t'$ continuous in $t\in [0,1]$ is a Lagrangian subspace of $T_{\gamma(t)} M$ such that
\begin{itemize}
\item $\Lambda_i' = D\iota (T_{\delta(i)}L)$ for $i \in \{0,1\}$ 
\item there exists $\Theta_t \in \R$ continuous in $t\in [0,1]$ such that  \cbu $\op{Det}^2(\Lambda_t') = e^{2\pi \sqrt{-1}\Theta_t}$ \cb and $\Theta_i =  \theta (\delta(i))$ for $i \in \{0,1\}$.
\end{itemize}
\end{defn}

In the case that $H\equiv 0$, $\gamma$ is trivial, and we require that  $(\delta(0), \delta(1)) \in R$. Then one can check that $\ind\bar \gamma = \ind(\delta(0), \delta(1))$ as in Formula~\ref{index of p,q}.

\begin{defn}\label{def: index}
For  $\bar u  \in \widetilde{\mathcal M}_{{J}, {H}}(\bar\gamma_-, \bar\gamma_+;  \bold{\alpha})$
we define the index of $\bar u$ by  $$\ind\bar u = \ind \bar \gamma_- -\ind \bar\gamma_+ -\sum_{j=1}^{|\Delta|}\ind \bold{\alpha}(j)+|\Delta|;$$
For $\bar u \in \widetilde{\mathcal M}_J(\bar\gamma_-, \emptyset;\alpha)$
we define the index of $\bar u$ by $$\ind\bar u = \ind \bar \gamma_-  -\sum_{j=1}^{|\Delta|}\ind \bold{\alpha}(j)+|\Delta|;$$
For $\bar u \in \widetilde{\mathcal M}_J(\emptyset, \bar\gamma_+ ;\alpha)$
we define the index of $\bar u$ by $$\ind\bar u = -\ind \bar \gamma_+  -\sum_{j=1}^{|\Delta|}\ind \bold{\alpha}(j)+|\Delta| + n.$$
\end{defn}

\cbu
\begin{prop}
For all the three cases in Definition~\ref{def: index}, $\ind\bar u$ equals to the Fredholm index of $\bar u$. 
\end{prop}
\begin{proof}
This is a standard result, and in the graded case the formula is simpler. See for example, Proposition 3.1.0 in \cite{alston2018exact} for the first moduli space and Proposition 5.3 in \cite{2013arXiv1311.2327A} for the second and third moduli spaces.
\end{proof}
\cb

\subsection*{Transversality}
Below are the collection of some standard transversality results:
\begin{prop} [Section 5 in \cite{alston2018exact}] \label{prop: transversality of pearls}
Suppose that $H$ is either admissible or constantly $0$. 
There is a Baire set $\mathcal J^{\op{reg}}$ of compatible almost complex structures such that for any $J \in \mathcal J^{\op{reg}}$,
if $\gamma_- \neq \gamma_+$,
 the moduli space ${\mathcal M}_{{J}, {H}}(\bar\gamma_-, \bar\gamma_+;  \bold{\alpha})$ is transversely cut-out, and in particular, it is a smooth manifold of dimension $\ind \bar u - 1$.
\end{prop}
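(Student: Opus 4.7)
The plan is to follow the standard Sard--Smale argument, adapted to the immersed boundary condition and the branch--jump punctures $\Delta$. I would first set up a universal moduli space by introducing a Banach manifold $\mathcal J^\ell$ of $C^\ell$ compatible almost complex structures that agree with $J_M$ near $\partial M$, and form
\begin{displaymath}
\widetilde{\mathcal M}^{\mathrm{univ}}(\bar\gamma_-, \bar\gamma_+; \bold\alpha) = \left\{ (J, \bar u) : J \in \mathcal J^\ell,\ \bar u \in \widetilde{\mathcal M}_{J, H}(\bar\gamma_-, \bar\gamma_+; \bold\alpha) \right\},
\end{displaymath}
realized as the zero locus of a smooth Fredholm section of a Banach bundle whose fibre at $(J, \bar u)$ is the space of $(0,1)$-forms with values in $u^*TM$. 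I would use a weighted Sobolev completion, with weights at $\pm\infty$ chosen from the spectral gap of the asymptotic operator at $\bar\gamma_\pm$, and weights at each branch--jump puncture $z_j \in \Delta$ chosen from the Kähler angles encoded in $\ind \alpha(j)$; this is what produces a Fredholm operator of index $\ind \bar u$ rather than something ill--defined.

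Next I would verify that the full vertical linearization $D_{\bar u} \oplus D_J$ is surjective at every solution. The purely CR part $D_{\bar u}$ is already known to be Fredholm of index $\ind \bar u$ by the standard linear gluing/excision calculation, where the contribution $n + \ind \bar\gamma_- - \ind \bar\gamma_+$ from the asymptotic data and the contribution $1 - \ind \alpha(j)$ at each puncture add up to the stated formula. To kill the cokernel I would invoke somewhere injectivity: the hypothesis $\gamma_- \neq \gamma_+$ forces $\bar u$ to be non--constant (a constant strip has $\gamma_- = \gamma_+$), and by interior unique continuation and the argument of Floer--Hofer--Salamon, a non--constant finite--energy strip has a dense open set of injective interior points $z_0$ with $du(z_0) \neq 0$ and $u^{-1}(u(z_0)) = \{z_0\}$. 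Choosing such a $z_0$ away from $\Delta$, any putative $\eta$ in the cokernel can be paired against a perturbation $Y \in T_J \mathcal J^\ell$ supported near $u(z_0)$ to produce a non--zero pairing, contradicting $\eta \neq 0$.

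Applying Sard--Smale to the projection $\pi: \widetilde{\mathcal M}^{\mathrm{univ}} \to \mathcal J^\ell$ then produces a Baire set $\mathcal J^{\ell,\mathrm{reg}}$ of regular values, and a Taubes--style argument upgrades this to a Baire set $\mathcal J^{\mathrm{reg}}$ inside the space of smooth compatible $J$'s. For $J \in \mathcal J^{\mathrm{reg}}$ the space $\widetilde{\mathcal M}_{J, H}(\bar\gamma_-, \bar\gamma_+; \bold\alpha)$ is a smooth manifold of dimension $\ind \bar u$, and quotienting by the free $\R$--translation action (which is free precisely because $\bar u$ is non--constant) yields the claimed dimension $\ind \bar u - 1$. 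The main technical obstacle is the functional--analytic setup near the punctures $z_j \in \Delta$: one must pick weights compatible with the asymptotic operator associated to the branch jump of type $\alpha(j) \in R$, and then show that the Fredholm theory, somewhere--injectivity, and the local $J$--perturbation argument all go through with these weights. This is carried out in Section 5 of \cite{alston2018exact} for admissible $H$, and since the $H \equiv 0$ case only adds the risk of ghost strips (which is excluded here by $\gamma_- \neq \gamma_+$), the same machinery applies without change.
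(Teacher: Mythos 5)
Your proposal is correct and follows essentially the same route as the paper, which simply observes that $\gamma_-\neq\gamma_+$ guarantees somewhere injectivity (ruling out trivial/constant strips) so that the standard Sard--Smale perturbation of $J$ from Section 5 of \cite{alston2018exact} applies verbatim in the $H\equiv 0$ case. Your write-up supplies the standard supporting details (universal moduli space, weighted Sobolev setup at the punctures in $\Delta$, Taubes trick, free $\R$-action) that the paper leaves implicit by citation.
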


\begin{proof}[Sketch of proof]
We can achieve transversality by perturbing the almost complex structure $J$ as long as the holomorphic curves are somewhere injective, which is guaranteed by $\gamma_- \neq \gamma_+$.
Section 5 in \cite{alston2018exact} contains the proof of this proposition for the admissible $H$ case, but the same proof also works for the $H\equiv 0$ case. 
\end{proof}

\begin{prop} [Section 3 in \cite{biran4221quantum}] \label{prop: transversality of pearly trajectories}
There is a Baire set $\mathcal Z^{\op{reg}}$ of tuples of compatible almost complex structures and Riemannian metrics such that
for any $(J, g)\in \mathcal Z^{\op{reg}}$, and for any $\bar\gamma = (p,q) \in R$:

if $\{p, q\} \cap W^u(x) = \emptyset$, then the moduli space
$\mathcal M_{ J} (x, \bar\gamma; \alpha)$ is transversely cut off, and in particular, it is a smooth manifold of dimension 
\cbu
$$\op{vir dim}{\mathcal M}_J(\bar\gamma, x; \alpha) := \ind x - \ind \bar\gamma - \sum_{j\in \Delta} \ind(\alpha(j)) + |\Delta| -1;$$
\cb
if $\{p, q\} \cap W^s(x) = \emptyset$, then the moduli space
$\mathcal M_{ J} (\bar\gamma,x; \alpha)$ is transversely cut off, and in particular, it is a smooth manifold of dimension 
\cbu
$$\op{vir dim}{\mathcal M}_J(x,\bar\gamma; \alpha) := \ind \bar \gamma - \ind x - \sum_{j\in \Delta} \ind(\alpha(j)) + |\Delta| - 1.$$
\cb
\end{prop}

\begin{proof}[Sketch of proof]
The proof in Section 3 of \cite{biran4221quantum} is carried out for the embedded Lagrangian case, but it also works in the immersed case.
\end{proof}

\begin{prop} [Section 5 in \cite{alston2018exact}] \label{prop: transversality of pearls 2}
Suppose that $H_{- \infty} \equiv 0$ and $H_{+\infty}$ is admissible, or $H_{- \infty}$ is admissible and $H_{+\infty}\equiv 0$.
There exist a Baire set $\mathbfcal J^{\op{reg}}$  of $s$-dependent compatible almost complex structures such that for any $\bold J\in \mathbfcal J^{\op{reg}}$,
  the moduli space $\widetilde{\mathcal M}_{\bold{J}, \bold{H}}(\bar\gamma_-, \bar\gamma_+;  \bold{\alpha})$ is transversely cut-out, and in particular, a smooth manifold of dimension $$\ind\bar\gamma - \sum_{j\in \Delta} \ind(\alpha(j)) + |\Delta| - \ind x.$$
\end{prop}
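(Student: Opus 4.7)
The plan is to follow the standard Floer-theoretic transversality scheme used for Proposition~\ref{prop: transversality of pearls} and in Section~5 of \cite{alston2018exact}. The essential simplification in the present situation is that one of $H_{\pm\infty}$ is zero while the other is admissible, so the asymptotic limits $\bar\gamma_-$ and $\bar\gamma_+$ necessarily live in disjoint sets of types: trivial and $s$-translation invariant strips cannot occur, and this makes the somewhere-injectivity step routine.

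First, I would set up the Fredholm framework. Fix $k\geq 1$ and $p>2$, and form the Banach manifold $\mathcal B$ of $W^{k,p}$-maps $\bar u=(u,\ell)$ from $(Z,\Delta)$ to $(M,L)$ with the prescribed Lagrangian boundary conditions, asymptotic limits $\bar\gamma_\pm$, and branch-jump types $\bold\alpha$, modeled on weighted Sobolev spaces near $\pm\infty$ and near each point of $\Delta$. Let $\mathcal J$ denote the separable Banach manifold of $C^\varepsilon$-smooth $s$-dependent $\omega$-compatible almost complex structures that equal $J_M$ near $\partial M$ and whose $s$-derivative is compactly supported. The Cauchy--Riemann--Floer operator $\bar\partial_{\bold J,\bold H}$ is a smooth section of a Banach bundle over $\mathcal B\times\mathcal J$, and at any zero its linearization in the $\bar u$-direction is Fredholm of index equal to the stated dimension, by the standard Riemann--Roch/Maslov computation together with the branch-jump correction at the punctures $\Delta$.

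Second, I would form the universal moduli space $\widetilde{\mathcal M}^{\mathrm{univ}}=\{(\bar u,\bold J):\bar\partial_{\bold J,\bold H}(\bar u)=0\}$ and prove that the universal linearization is surjective at every solution. By the usual duality argument, this reduces to producing a regular injective point of $u$, namely a $z_0\in \mathring Z$ satisfying $u^{-1}(u(z_0))=\{z_0\}$, $\partial_s u(z_0)\neq 0$, and lying in the open $s$-interval on which $\partial_s\bold J$ is supported. At such a $z_0$ any nonzero element of the cokernel can be killed by a perturbation of $\bold J$ supported in a small neighborhood of $u(z_0)$.

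Third, the existence of this regular injective point $z_0$ is the step I expect to be the main obstacle, but it is handled as follows. Since $H_{-\infty}\not\equiv H_{+\infty}$, $\bar u$ is never trivial and never $s$-translation invariant, so an Aronszajn-type unique continuation argument of exactly the type carried out in Section~5 of \cite{alston2018exact} shows that the set of non-injective points and $s$-critical points of $u$ is nowhere dense in $Z\setminus\Delta$. By the exponential convergence of $u$ to its asymptotic limits, one may arrange the $s$-support of the variations of $\bold J$ to overlap an open set of such good points, producing $z_0$. Finally, applying the Sard--Smale theorem to the smooth Fredholm projection $\widetilde{\mathcal M}^{\mathrm{univ}}\to\mathcal J$ yields a comeager set of regular values; for each such $\bold J$ the fiber is transversely cut out of the stated dimension, and the standard Taubes trick upgrades this from $C^\varepsilon$ to $C^\infty$, giving the desired Baire set $\mathbfcal J^{\op{reg}}$.
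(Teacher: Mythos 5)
Your proposal is correct and takes essentially the same route as the paper, whose own ``proof'' is a single sentence deferring to Section~5 of \cite{alston2018exact} (where both asymptotic Hamiltonians are admissible) and asserting that the argument carries over. Your sketch correctly identifies the decisive simplification that makes this deferral legitimate -- because $H_{-\infty}\neq H_{+\infty}$, no solution can be trivial or $s$-translation invariant, so the regular-injective-point step goes through without the extra care needed in the $\mathbb{R}$-invariant case -- and then runs the standard universal moduli space / Sard--Smale / Taubes-trick machinery, with the appropriate weighted-Sobolev setup near the punctures $\Delta$ and the ends. One small point worth noting, independent of your argument: the dimension formula printed in the proposition statement mentions $\operatorname{ind}x$, which does not appear in the notation $\widetilde{\mathcal M}_{\bold{J},\bold{H}}(\bar\gamma_-,\bar\gamma_+;\bold\alpha)$; the Fredholm index for that moduli space should be $\operatorname{ind}\bar\gamma_- - \operatorname{ind}\bar\gamma_+ - \sum_j\operatorname{ind}\bold\alpha(j)+|\Delta|$, and the formula with $\operatorname{ind}x$ pertains to the fiber-product spaces $\widetilde{\mathcal M}_{\bold J,\bold H}(\bar\gamma,x;\alpha)$ or $\widetilde{\mathcal M}_{\bold J,\bold H}(x,\bar\gamma;\alpha)$; this appears to be a typo in the statement rather than a gap in your argument.
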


\begin{proof}[Sketch of proof]
The proof in Section 5 of \cite{alston2018exact}, which is for the case that both $H_\infty$ and $H_\infty$ are admissible, also works for the current case.
\end{proof}

\subsection*{Disc bubbles} \label{subsection: disc bubble}
In this section, we derive some inequality of indexes, which will be used later to exclude certain disc bubbles.
\cbu
Because of our positivity assumption, to derive these inequalities we do not need any transversality result.
\cb
The case that $H$ is admissible is easier and is taken care of in \cite{alston2018exact}. 
Now we focus on the case that $H\equiv 0$. 
Consider the moduli space ${\mathcal M}_{{J}}(\bar \gamma_-, \bar\gamma_+; \alpha)$ that satisfies $\mathcal A(\alpha(j)) >0$ for $j\in \{1,...,|\Delta|\}$.  
Here the reason why we assume $\mathcal A(\alpha(j)) >0$ is that to compactify moduli spaces of holomorphic strips without boundary punctures, we need to add holomorphic strips with boundary punctures along which trees of holomorphic discs are attached. These trees of holomorphic discs are non-constant, and hence have positive energy. 
Suppose that 
${\mathcal M}_{{J}}(\bar \gamma_-, \bar\gamma_+; \alpha)$ is not empty.
Then for any $(\Delta, \bar u) \in {\mathcal M}_{{J}}(\bar \gamma_-, \bar\gamma_+; \alpha)$ we have 
\begin{claim}\label{index of gamma and index of u}
$\ind\bar\gamma_- - \ind\bar\gamma_+ \geq \ind\bar u + 2 |\Delta|$.
\end{claim}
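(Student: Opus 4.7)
The plan is to unwind the definition of $\ind \bar u$, reduce the inequality to a statement about the branch-jump indices $\ind\alpha(j)$, and then apply the strong positivity condition termwise. No analysis is required: this is essentially an arithmetic consequence of the definitions together with Condition~\ref{strong positivity}.

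First I would write out the defining formula
\[
\ind\bar u \;=\; \ind\bar\gamma_- - \ind\bar\gamma_+ - \sum_{j=1}^{|\Delta|}\ind \alpha(j) + |\Delta|
\]
and rearrange to obtain
\[
\ind\bar\gamma_- - \ind\bar\gamma_+ - \ind\bar u \;=\; \sum_{j=1}^{|\Delta|}\ind \alpha(j) \;-\; |\Delta|.
\]
So the desired inequality $\ind\bar\gamma_- - \ind\bar\gamma_+ \geq \ind\bar u + 2|\Delta|$ is equivalent to $\sum_{j=1}^{|\Delta|}\ind\alpha(j) \geq 3|\Delta|$.

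Next I would invoke the standing assumption that $\mathcal A(\alpha(j)) > 0$ for each marked point. Applying Condition~\ref{strong positivity} (strong positivity) to each $\alpha(j)\in R$ gives
\[
\ind \alpha(j) \;\geq\; \max\!\left\{\tfrac{n+2}{2},\,3\right\} \;\geq\; 3,
\]
and summing over $j=1,\dots,|\Delta|$ yields $\sum_{j=1}^{|\Delta|}\ind\alpha(j) \geq 3|\Delta|$, completing the proof.

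There is really no main obstacle here; the only subtlety is making sure the hypothesis $\mathcal A(\alpha(j))>0$ is in force for every branch jump, which is exactly the assumption made just before the claim (motivated by the fact that, in the compactification, punctures arise where non-constant disc trees of positive symplectic area bubble off, so that each $\alpha(j)$ indeed has strictly positive action). Once that is noted, the claim follows immediately from strong positivity.
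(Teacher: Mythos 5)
Your proof is correct and is essentially identical to the paper's: both rearrange the definition of $\ind\bar u$ and bound each $\ind\alpha(j)\geq 3$ using positivity of $\mathcal A(\alpha(j))$. The only cosmetic difference is that the paper observes the weak positivity condition already suffices here, whereas you invoke the (stronger) Condition~\ref{strong positivity}; since strong positivity implies weak positivity, this changes nothing.
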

\begin{proof}
This follows directly from the weak positivity condition. 
 $\ind \bar \gamma_{-} -\ind \bar\gamma_{+} = \ind\bar u +\sum_{j=1}^{|\Delta|}\ind {\alpha}(j) - |\Delta| \geq \ind\bar u + 2|\Delta| $.
\end{proof}
We will see that if the moduli space ${\mathcal M}_{{J}}(\bar \gamma_-, \bar\gamma_+; \alpha)$ is transversely cut out, then $\ind(\bar u) \geq 1$. 
When there is no constant strip (also called ghost strip) in the moduli space ${\mathcal M}_{{J}}(\bar \gamma_-, \bar\gamma_+; \alpha)$ for any compatible $J$, 
we can perturb the moduli space to achieve transversality by varying $J$.
Constant strips can only appear, if and only if $\gamma_- = \gamma_+$. 
(Recall that $\bar\gamma_\pm = (\gamma_\pm, \delta_\pm)$ and in this case $\gamma_\pm$ is a constant map and $(\delta_\pm(0), \delta_\pm(1)) \in R$, but we write $\bar\gamma_\pm \in R$ for simplicity.)
\begin{claim}\label{claim: constant strip index}
Suppose that $\gamma_{-} = \gamma_{+}$ and that ${\mathcal M}_{{J}}(\bar \gamma_-, \bar\gamma_+; \alpha)$ is not empty.
Then $$\ind\bar\gamma_{-} - \ind\bar\gamma_{+} \geq 2.$$
\end{claim}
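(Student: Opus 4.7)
The assumption $H\equiv 0$ forces $\gamma_{-}=\gamma_{+}$ to be a constant chord at some point $m\in\iota(L)$, and since $\bar\gamma_{\pm}$ correspond to elements of $R$, the point $m$ is necessarily a double point with $\iota^{-1}(m)=\{a,b\}$ for some $a\neq b$. Each ordered pair $(\delta_{\pm}(0),\delta_{\pm}(1))$ is then $(a,b)$ or $(b,a)$, leaving only two possibilities: either (i) $\bar\gamma_{-}=\bar\gamma_{+}$, or (ii) $\bar\gamma_{-}=(p,q)$ and $\bar\gamma_{+}=(q,p)$ for some $(p,q)\in R$. The plan is to rule out case (i) by an energy argument, upgrade the energy identity in case (ii) to the strict inequality $\mathcal A(p,q)>0$, and then invoke strong positivity.

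To rule out case (i), I would pick any $(\Delta,\bar u)$ in the moduli space and apply the energy identity
\begin{equation*}
E(\bar u)=\mathcal A_{H}(\bar\gamma_{-})-\mathcal A_{H}(\bar\gamma_{+})-\sum_{j=1}^{|\Delta|}\mathcal A(\alpha(j)).
\end{equation*}
In case (i) the first two terms cancel, so $E(\bar u)=-\sum_{j}\mathcal A(\alpha(j))\leq 0$; combined with $E(\bar u)\geq 0$ and the standing assumption $\mathcal A(\alpha(j))>0$, this forces $|\Delta|=0$ and $E(\bar u)=0$. But $E(\bar u)=0$ together with $H\equiv 0$ forces $u$ to be constant, and the convention requires $\Delta\neq\emptyset$ for a trivial $u$---a contradiction. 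So the moduli space in case (i) is empty, and we may assume case (ii).

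In case (ii), direct computation gives $\mathcal A_{H}(\bar\gamma_{-})=h(q)-h(p)=\mathcal A(p,q)$ and $\mathcal A_{H}(\bar\gamma_{+})=h(p)-h(q)=-\mathcal A(p,q)$, so the energy identity reads $E(\bar u)=2\mathcal A(p,q)-\sum_{j}\mathcal A(\alpha(j))$. The same nonnegativity argument now yields $\mathcal A(p,q)\geq 0$, with equality forcing again a constant strip and empty $\Delta$, contradicting the convention; hence $\mathcal A(p,q)>0$ strictly. Strong positivity then gives $\ind(p,q)\geq (n+2)/2$, and using $\ind(q,p)=n-\ind(p,q)$ we obtain
\begin{equation*}
\ind\bar\gamma_{-}-\ind\bar\gamma_{+}=\ind(p,q)-\ind(q,p)=2\ind(p,q)-n\geq 2,
\end{equation*}
as required. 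The one subtlety I foresee is securing the \emph{strict} inequality $\mathcal A(p,q)>0$, which I expect to handle cleanly with the convention forbidding constant strips that carry no marked points; without that convention one only gets $\mathcal A(p,q)\geq 0$, which is insufficient to invoke strong positivity.
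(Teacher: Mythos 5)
Your proof is correct and follows the same two-case decomposition as the paper ($\bar\gamma_-=\bar\gamma_+$ versus $\bar\gamma_-=(p,q)$, $\bar\gamma_+=(q,p)$), with case (ii) concluded by the identical computation $2\ind(p,q)-n\geq 2$ from strong positivity. The one place you diverge is how case (i) is killed: the paper argues combinatorially that, because $u$ is constant on each boundary component and $\mathcal A(p,q)=-\mathcal A(q,p)$, the positivity of all $\mathcal A(\alpha(j))$ permits at most one branch jump per component of $\R\times\{0,1\}$, which is incompatible with the lift returning to the same branch at both ends; you instead sum everything into the global energy identity $E(\bar u)=-\sum_j\mathcal A(\alpha(j))$ and contradict $E\geq 0$ together with the stability convention for trivial strips. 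The two mechanisms rest on the same underlying positivity, but yours is shorter and avoids tracking branches. You also do something the paper skips: in case (ii) the paper invokes strong positivity without verifying its hypothesis $\mathcal A(p,q)>0$, whereas you extract this from $2\mathcal A(p,q)=E(\bar u)+\sum_j\mathcal A(\alpha(j))$ and the no-empty-$\Delta$-on-a-trivial-strip convention; that is a genuine (if small) gap in the paper's write-up that your argument closes.
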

\begin{proof}
We have the following two cases:  
\begin{itemize}
\item  $\bar\gamma_{-} = \bar\gamma_{+}$. For $\bar u= (u, \ell)$ to be stable, we must have $\Delta \neq \emptyset$.
Since $u$ restricted to each component of $\R \times \{0,1\}$ is constant, there is clear notion of branches for $\ell$.
Around each element in $\Delta$, $\ell$ has a branch jump. Since for each $(p,q) \in R$ we have $\mathcal A(p,q) = -\mathcal A(q,p)$, on each component of $\R \times \{0,1\}$
there can be at most one branch jump. This contradicts to  $\bar\gamma_{-} = \bar\gamma_{+}$.
\item $\bar\gamma_{-} \neq \bar\gamma_{+}$. Denote $\bar\gamma_{-} = (p,q) \in R$, and then $\bar\gamma_+ = (q,p)$.
By the strong positivity condition, we get $\ind(p,q) \geq \frac{n+2}{2}$. Hence  $\ind\bar\gamma_{-} - \ind\bar\gamma_{+} = \ind(p,q) - \ind(q,p) = 2\ind(p,q) - n \geq 2$. (Figure~\ref{fig: ghost} is an illustration of the case when $|\Delta| = 2$ with two additional disc bubbles attached along $\Delta$.)
\end{itemize}
\end{proof}

\begin{rmk}
This claim shows why we impose the strong positivity condition instead of only the weak one.
\end{rmk}

Now we switch to the moduli space ${\mathcal M}_J(\bar\gamma, x; \alpha)$ with $x\in \op{Crit}f$ and $\bar\gamma = (p,q) \in R$
that satisfies $\mathcal A(\alpha(j)) >0$ for each $j\in \{1,...,|\Delta|\}$.  
Its virtual dimension satisfies 
\begin{align*}
\op{vir dim}{\mathcal M}_J(\bar\gamma, x; \alpha) & = \ind\bar\gamma - \sum_{j\in \Delta} \ind(\alpha(j)) + |\Delta| - \ind x - 1 \\
&\leq \ind\bar\gamma  - 2|\Delta| - \ind x - 1,
\end{align*}
where $\ind x $ is the Morse index. 

Suppose that ${\mathcal M}_J(\bar\gamma, x; \alpha)$ is not empty. In the transverse case, we have $\ind\bar\gamma - \ind x \geq \op{vir dim}({\mathcal M}_J(\bar\gamma, x; \alpha)) + 2|\Delta| + 1\geq 2|\Delta| + 1.$
The standard transversality argument does not work when ${\mathcal M}_J(\bar\gamma, x; \alpha)$ contains constant maps,
which can only happen when $p$ or $q$ lies in $W^s(x)$ the stable manifold of $x$.
If $(f,g)$ is generic, this only happens when $\ind x = 0$.
\begin{claim}
Suppose that ${\mathcal M}_J(\bar\gamma, x; \alpha)$ is not empty,
where $x\in \op{Crit}f$ with $\ind x = 0$ and $\bar\gamma  \in R$.
Suppose also that  $\mathcal A(\alpha(j)) >0$, 
 for each $j\in \{1,...,|\Delta|\}$  and $\Delta \neq \emptyset$. Then $\ind \bar\gamma - \ind x \geq 3$.
\end{claim}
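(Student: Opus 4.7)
The plan is to split into two cases according to whether the underlying strip $u$ is non-constant or constant, and to conclude in each case by an index computation plus the positivity hypothesis.

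For the non-constant case, I would invoke standard transversality for non-constant $J$-holomorphic strips with immersed Lagrangian boundary: a generic $t$-dependent $J$ makes $\mathcal M_J(\bar\gamma, \emptyset; \alpha)$ transversely cut out, and a generic choice of $(f, g)$ makes $\op{ev}_{+\infty}$ transverse to $W^s(x)$, so the fiber product $\mathcal M_J(\bar\gamma, x; \alpha)$ is a manifold of dimension
\begin{equation*}
\op{vir dim} \mathcal M_J(\bar\gamma, x; \alpha) = \ind \bar\gamma - \sum_{j=1}^{|\Delta|} \ind \alpha(j) + |\Delta| - 1 - \ind x,
\end{equation*}
the $-1$ arising from the quotient by $\R$-translation and $-\ind x$ from the codimension of $W^s(x)$ in $L$. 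Non-emptiness forces this to be $\geq 0$, and since each branch jump has positive action, weak positivity gives $\ind \alpha(j) \geq 3$, whence $\ind \bar\gamma - \ind x \geq 2|\Delta| + 1 \geq 3$ using $|\Delta| \geq 1$.

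For the constant case, write $\bar\gamma = (p, q) \in R$ and suppose $u \equiv m \in M$. The incoming branch-jump condition at $s = -\infty$ forces $\iota(p) = \iota(q) = m$, and the transverse double-point hypothesis gives $\iota^{-1}(m) = \{p, q\}$. Hence $\ell$ takes values in $\{p, q\}$, and each branch jump is of type $(p, q)$ or $(q, p)$. Since $\mathcal A(p, q) = -\mathcal A(q, p)$, the assumption $\mathcal A(\alpha(j)) > 0$ selects exactly one of the two types; after relabeling say it is $(p, q)$, so in particular $\mathcal A(p, q) > 0$. I would then trace the values of $\ell$ along the top and bottom boundaries: on the top, $\ell$ starts at $q$ at $s = -\infty$, and the convention forces a $(p, q)$-jump to have value $q$ on the left and $p$ on the right; hence at most one jump on the top, after which $\ell \equiv p$. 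The analogous analysis on the bottom, starting at $p$, allows at most one jump, after which $\ell \equiv q$. Matching with the removable-singularity condition $\ell(+\infty) = x$ (using $W^s(x) = \{x\}$ when $\ind x = 0$) excludes the $0$-top/$0$-bottom configuration (which also has $\Delta = \emptyset$, already ruled out) and the $1$-top/$1$-bottom configuration (both force $p = q$), leaving $|\Delta| = 1$ with a single branch jump of type $(p, q)$. Since $\mathcal A(p, q) > 0$, positivity yields $\ind \bar\gamma = \ind(p, q) \geq 3$, so $\ind \bar\gamma - \ind x \geq 3$.

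The main obstacle is the boundary-lift bookkeeping in the constant case: one must carefully verify that the branch-jump convention plus positivity of each $\mathcal A(\alpha(j))$ leaves only the $|\Delta| = 1$ configuration, after which the claim reduces to the $\geq 3$ clause of the positivity hypothesis. The non-constant case is, by contrast, a direct virtual-dimension computation of the same flavor as Claim~\ref{index of gamma and index of u}.
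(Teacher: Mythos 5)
Your proposal reaches the right conclusion, but it misses the mechanism the paper actually uses, and the non-constant half of your case division rests on a transversality assertion that is not available here. The paper's proof is two lines: for any $\bar u\in\mathcal M_J(\bar\gamma,\emptyset;\alpha)$ the energy identity gives $0\le E(\bar u)=\mathcal A(\bar\gamma)-\sum_{j}\mathcal A(\alpha(j))$, so $\mathcal A(\bar\gamma)\ge\sum_j\mathcal A(\alpha(j))>0$ because $\Delta\neq\emptyset$ and every $\mathcal A(\alpha(j))>0$; the positivity condition then gives $\ind\bar\gamma\ge3$, and $\ind x=0$. This is Stokes' theorem plus the positivity hypothesis: it needs no genericity, no dimension count, and it treats constant and non-constant strips uniformly.

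By contrast, your non-constant case invokes ``standard transversality'' to force the virtual dimension to be nonnegative. Transversality by perturbing $J$ requires somewhere-injectivity, and a non-constant strip with branch jumps and a removable singularity at $+\infty$ is just a punctured disc, which may be multiply covered; moreover, the paper's transversality statement for the fiber product with $W^s(x)$ explicitly assumes $\{p,q\}\cap W^s(x)=\emptyset$, which is precisely the hypothesis that fails in the regime this claim is designed to cover (namely $\ind x=0$, where $p$ or $q$ may lie in $W^s(x)$). So that step is a genuine gap, not a routine citation. Your constant-case combinatorics of branch jumps is essentially correct, modulo the ``relabeling'': since $\bar\gamma=(p,q)$ is fixed you cannot swap $p$ and $q$; instead one observes that if the positive-action type were $(q,p)$ then no jump could occur on either boundary component, contradicting $\Delta\neq\emptyset$, so the positive-action type is forced to be $(p,q)=\bar\gamma$. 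But all of this bookkeeping is doing by hand what the single energy inequality gives for free, and in any case the energy argument is what you should use to repair the non-constant case as well.
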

\begin{proof}
For any $\bar u \in {\mathcal M}_J(\bar\gamma, \emptyset; \alpha)$,  one has $E(\bar u) \geq 0$. Therefore, $\mathcal A(\bar\gamma) > 0$, and hence $\ind \bar\gamma - \ind x = \ind\bar\gamma \geq 3$.  
\end{proof}

Similarly,
\begin{claim}\label{claim: empty to gamma}
Suppose that ${\mathcal M}_J(x,\bar\gamma; \alpha)$ is not empty,
where $x\in \op{Crit}f$ with $\ind x = n$ and $\bar\gamma  \in R$.
Suppose also that  $\mathcal A(\alpha(j)) >0$, 
 for each $j\in \{1,...,|\Delta|\}$  and $\Delta \neq \emptyset$. 
 \cbu 
 Then $\ind x - \ind \bar\gamma \geq 3$.
 \cb
\end{claim}

\subsection*{Degeneration at an embedded point of $\iota$} 
When we compactify the moduli space $\mathcal M_{J}(\bar\gamma_-, \bar\gamma_+)$, $\mathcal M_{J}(\bar\gamma, x)$ or $\mathcal M_{J}(x, \bar\gamma)$, there is another bad degeneration that we want to rule out (recall that these moduli spaces refer to $H\equiv 0$ because $H$ is not included in the notation).
Namely, the moduli space can break at an embedded point of $\iota$. By the exactness of $\iota$, such degenerations come in pairs as below: $$(\bar u_-, \bar u_+) \in \mathcal M_{J}(\bar\gamma_-, \emptyset; \alpha_-) {}_{\op{ev}_{+\infty}}\times_{\op{ev}_{-\infty}} \mathcal M_{J}(\emptyset, \bar\gamma_+; \alpha_+),$$
where $\bar\gamma_\pm \in R$, and $\mathcal A(\alpha_\pm(j_\pm)) >0$, for any $j_\pm \in \{1,...,|\Delta_\pm|\}$. (Here we allow $\Delta_\pm = \emptyset$).

\begin{claim}\label{RxR}
If the moduli space $\mathcal M_{J}(\bar\gamma_-, \emptyset; \alpha_-) {}_{\op{ev}_{+\infty}}\times_{\op{ev}_{-\infty}} \mathcal M_{J}(\emptyset, \bar\gamma_+; \alpha_+)$ that satisfies the above condition is not empty, then 
$$\ind\bar\gamma_- - \ind\bar\gamma_+ \geq 2.$$
\end{claim}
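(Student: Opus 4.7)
The plan is to combine the energy identity with the strong positivity condition, applied to both $\bar\gamma_-$ and the reversal of $\bar\gamma_+$. First I will use the non-negativity of energy on the two half-strips to extract the strict inequalities $\mathcal A(\bar\gamma_-) > 0$ and $\mathcal A(\bar\gamma_+) < 0$; then a direct application of strong positivity together with the reciprocity relation $\ind(p,q) + \ind(q,p) = n$ will yield the bound.

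For the first step, the same Stokes' theorem plus exactness derivation used earlier for strips gives, for any $\bar u_- \in \mathcal M_J(\bar\gamma_-, \emptyset; \alpha_-)$ and $\bar u_+ \in \mathcal M_J(\emptyset, \bar\gamma_+; \alpha_+)$,
\[
E(\bar u_-) = \mathcal A(\bar\gamma_-) - \sum_{j_-} \mathcal A(\alpha_-(j_-)) \geq 0, \qquad E(\bar u_+) = -\mathcal A(\bar\gamma_+) - \sum_{j_+} \mathcal A(\alpha_+(j_+)) \geq 0.
\]
Since each $\mathcal A(\alpha_\pm(j_\pm)) > 0$ by hypothesis, this already yields $\mathcal A(\bar\gamma_-) \geq 0$ and $\mathcal A(\bar\gamma_+) \leq 0$. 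To upgrade to strict inequalities, suppose $\mathcal A(\bar\gamma_-) = 0$: then necessarily $|\Delta_-| = 0$ and $E(\bar u_-) = 0$, so $\bar u_-$ is a constant map. Writing $\bar\gamma_- = (p,q)$ with $p\neq q$, the continuity of $\ell$ on each of $\R\times\{0\}$ and $\R\times\{1\}$ together with the limits $\ell(-\infty,0)=p$, $\ell(-\infty,1)=q$ forces $\ell(\cdot, 0) \equiv p$ and $\ell(\cdot, 1) \equiv q$. But the removable-singularity condition at $+\infty$ requires $\ell(+\infty, 0) = \ell(+\infty, 1)$, contradicting $p \neq q$. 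The same argument (mirrored across $s$) rules out $\mathcal A(\bar\gamma_+) = 0$.

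For the second step, write $\bar\gamma_\pm = (p_\pm, q_\pm)$; then $\mathcal A(p_-, q_-) > 0$ and $\mathcal A(q_+, p_+) = -\mathcal A(p_+, q_+) > 0$. The strong positivity condition applied to each gives $\ind(p_-, q_-) \geq (n+2)/2$ and $\ind(q_+, p_+) \geq (n+2)/2$, and the reciprocity $\ind(p_+, q_+) = n - \ind(q_+, p_+)$ then converts the second bound into $\ind(p_+, q_+) \leq (n-2)/2$. Combining,
\[
\ind\bar\gamma_- - \ind\bar\gamma_+ \geq \frac{n+2}{2} - \frac{n-2}{2} = 2,
\]
which is the claim.

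The only real subtlety is the exclusion of the degenerate case where energy vanishes on one of the half-strips; this is handled by the constant-strip argument above, which crucially uses $\bar\gamma_\pm \in R$ so that the two endpoints of the boundary lift at the finite end are distinct. Everything else is a direct unpacking of definitions and the strong positivity condition.
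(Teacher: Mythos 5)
Your proof is correct and follows essentially the same route as the paper: apply strong positivity to $\bar\gamma_-$ and to the reversal $(q_+,p_+)$, then use $\ind(p,q)+\ind(q,p)=n$ to get the bound of $2$. The only difference is that you explicitly justify $\mathcal A(\bar\gamma_-)>0$ and $\mathcal A(\bar\gamma_+)<0$ via the energy identity and a constant-strip exclusion, whereas the paper's one-line proof leaves that step implicit; this is a reasonable elaboration, not a different argument.
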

\begin{proof}
This directly follows from the strong positivity condition.  Firstly, $\ind\bar\gamma_- \geq \frac{n+2}{2}$. Denote $\bar\gamma_+ =:(p,q)$, and then $\ind(q,p) \geq \frac{n+2}{2}$. Therefore, $\ind\bar\gamma_- - \ind\bar\gamma_+ \geq \frac{n+2}{2} - (n - \ind(q,p))\geq 2.$
\end{proof}

\section{Lagrangian Floer cohomology via pearly trajectories} \label{lagrangian floer via pearly trajectories}

We define the Floer cochain complex $CF^{*}(\iota)=\Z_2 \text{Crit}f \oplus\mathbb{Z}_{2}R=: \bold C\oplus \bold R,$
where $\Z_2 \text{Crit}f$ is the free $\mathbb{Z}_{2}$-module
generated by $\text{Crit}f$,
and $\mathbb{Z}_{2}R$ is the free $\mathbb{Z}_{2}$-module generated
by $R.$

We give $CF^{*}(L)$ a $\mathbb{Z}$-grading using $\ind$ and define the differential $d:CF^{*}(\iota)\to CF^{*+1}(\iota)$
by 
\[
d=\left(\begin{array}{cc}
d_{\bold C\bold C} & d_{\bold C\bold R}\\
d_{\bold R\bold C} & d_{\bold R\bold R}
\end{array}\right),
\]
 where 
\begin{itemize}
\item $d_{\bold C\bold C} : \bold C \to \bold C$ is the standard Morse differential, i.e.,
\[
d_{\bold C \bold C}x =\sum_{x'\in\text{Crit}f,\ind x'=\ind x +1}\sharp \mathcal M (x', x)\cdot x',
\]
where $\mathcal M(x', x)$ is the space of Morse trajectories from $x'$ to $x$ mod the translation of the domain. More precisely, \begin{align*}\mathcal M(x', x) =  \{ & u: \R \to M ~|~  \dot u = -\nabla f (u), \\
  & \lim_{s\to -\infty} u(s) = x' \text{ and } \lim_{s\to \infty} u(s) = x \}. 
  \end{align*} Here $\nabla f$ is the gradient of $f$ with respect to the metric $g$;
\item $d_{\bold C\bold R}: \bold R \to \bold C$ is defined over the generators by 
\[
d_{\bold C \bold R}\bar\gamma=\sum_{x\in\text{Crit}f,\ind x=\ind \bar\gamma+1}\sharp \mathcal M_{ J} (x, \bar\gamma)\cdot x.
\]
 See Figure~\ref{fig:four types of trajectories}(b) ($\bar \gamma = (p,q)$);
\item $d_{\bold R\bold C}: \bold C \to \bold R$ is defined over the generators by 
\[
d_{\bold R\bold C } x =\sum_{\bar\gamma \in R,\ind \bar\gamma=\ind x+1}\sharp \mathcal M_{ J} (\bar\gamma, x)\cdot \bar\gamma.
\]
 See Figure~\ref{fig:four types of trajectories}(c) ($\bar \gamma = (p,q)$);
\item $d_{\bold R\bold R}: \bold R \to \bold R$ is defined over the generators by 
\[
d_{\bold R\bold R } \bar \gamma =\sum_{\bar\gamma' \in R,\ind \bar\gamma' =\ind \bar\gamma +1}\sharp \mathcal M_{ J} (\bar\gamma', \bar\gamma)\cdot \bar\gamma'.
\]
 See Figure~\ref{fig:four types of trajectories}(d) ($\bar\gamma' = (r,s)$ and $\bar \gamma = (p,q)$).
\end{itemize}


\begin{rmk}
\label{lem:Under-strong-positivity}
In general, $d$ is supposed to count all the pearly trajectories,
and in our case, the exactness of $\iota$ rules out trajectories
that contain smooth discs. 
One might want to include in the definition of $d_{\bold R \bold R}$ a pearly trajectory that consists of an element in $ \mathcal M_J(\bar\gamma', \emptyset)$  and an element in $\mathcal M_J(\emptyset,\bar\gamma)$ connected by a Morse gradient trajectory over a finite time interval pictured in Figure~\ref{fig:four types of trajectories}(e) ($\bar\gamma' = (r,s)$ and $\bar \gamma = (p,q)$).
Under the strong positivity condition, any such pearly trajectory has
$\ind \bar\gamma' - \ind \bar \gamma \geq 2$ and hence does not contribute to the differential. This follows from a similar argument as the proof of Claim~\ref{RxR}.
\end{rmk}

\begin{lemma}
\label{prop:d-is-defined.} 
There is a Baire set $\mathcal Z^{\op{reg}}$ of tuples of compatible almost complex structures and Riemannian metrics such that
for any $(J, g)\in \mathcal Z^{\op{reg}}$,
$d$ is well-defined.
\end{lemma}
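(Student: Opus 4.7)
The plan is to show that, for generic $(J,g)$ in a Baire set $\mathcal Z^{\op{reg}}$, each of the four moduli spaces entering the matrix entries of $d$ is a compact $0$-dimensional manifold, hence finite, so the four sums defining the blocks of $d$ are finite. I would take $\mathcal Z^{\op{reg}}$ to be the intersection of the Baire sets produced by Propositions~\ref{prop: transversality of pearls} and~\ref{prop: transversality of pearly trajectories}, further intersected with the Morse--Smale pairs $(f,g)$. For $(J,g)\in\mathcal Z^{\op{reg}}$ and index difference $1$, the moduli spaces $\mathcal M(x',x)$, $\mathcal M_J(x,\bar\gamma;\emptyset)$, $\mathcal M_J(\bar\gamma,x;\emptyset)$, and $\mathcal M_J(\bar\gamma',\bar\gamma;\emptyset)$ are smooth $0$-manifolds of the expected dimension.

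For compactness I would apply Gromov--Floer compactness. Exactness of $\omega$ and of $\iota$ forbids sphere bubbles and closed holomorphic discs, so the only potential limits of sequences in such a $0$-dimensional moduli space are: (i) strip breaking at an intermediate generator; (ii) Morse breaking at an intermediate critical point; (iii) a non-constant holomorphic disc tree bubbling off at an interior boundary point, carrying a branch jump $(p,q)\in R$ with $\mathcal A(p,q)>0$ and $\ind(p,q)\ge\max\{(n+2)/2,\,3\}$ by strong positivity; and (iv) degeneration at an embedded point of $\iota$ as in Claim~\ref{RxR}. Possibilities (i) and (ii) are ruled out by a standard dimension count: each surviving factor must have non-negative expected dimension, but the combined expected dimension of a broken configuration is $-1$. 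Possibility (iv) is impossible because Claim~\ref{RxR} forces $\ind\bar\gamma'-\ind\bar\gamma\ge 2$, contradicting the index difference being $1$.

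The remaining and delicate case is disc bubbling. When the residual strip component is non-ghost, Claim~\ref{index of gamma and index of u} forces its index to be at most $-1$, so by transversality no such residual strip exists. If the residual strip is a ghost (constant in the strip variable), Claim~\ref{claim: constant strip index} provides an index gap of at least $2$, again contradicting the index-$1$ assumption; for the blocks $d_{\bold C\bold R}$ and $d_{\bold R\bold C}$ the analogous role is played by Claim~\ref{claim: empty to gamma} (and its $\ind x=0$ counterpart stated just before it) which rules out constant cap components that could survive at the boundary of the fiber product with $W^u(x)$ or $W^s(x)$. Hence each of the four $0$-dimensional moduli spaces is closed in its Gromov compactification and therefore finite, so $d$ is well-defined.

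I expect the main obstacle to be the bookkeeping for configurations containing a ghost component: standard perturbation of $J$ cannot achieve transversality on strata of constant maps, so one cannot argue by a virtual-dimension count alone. It is precisely at these ghost-component configurations that the strong positivity hypothesis of Condition~\ref{strong positivity} is indispensable, supplying the index gap of $2$ that the weaker Maslov estimate $\ind(p,q)\ge 3$ would not guarantee.
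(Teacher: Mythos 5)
Your proposal is correct and follows essentially the same route as the paper: transversality from Propositions~\ref{prop: transversality of pearls} and~\ref{prop: transversality of pearly trajectories} gives $0$-dimensional moduli spaces, and compactness is obtained by ruling out each Gromov--Floer degeneration via the index gaps of Claims~\ref{index of gamma and index of u}, \ref{claim: constant strip index}, \ref{claim: empty to gamma}, and \ref{RxR}, with strong positivity doing the work precisely at the ghost/constant configurations where perturbing $J$ cannot achieve transversality. The paper packages the same estimates into a single summed index inequality over a general broken configuration rather than your case-by-case elimination, but the content is identical.
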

\begin{proof}
By Proposition~\ref{prop: transversality of pearls} and Proposition~\ref{prop: transversality of pearly trajectories} the moduli spaces involved in the definition of $d$ are all $0$-dimensional manifolds.
Now we only need to show that they are compact.

We show that $\mathcal M_J(x,\bar\gamma)$ is compact, and leave the rest for readers.
Given any sequence of trajectories in $\mathcal M_J(x,\bar\gamma)$, the Gromov's companctness and the exactness of $\iota$ implies \cbu there exists a subsequence converging \cb to a broken trajectory  $$\bold u = ( u_{1}, ..., u_{k-1},([\bar u_{k}],\mathbf{v}_{k}), ([\bar u_{k+1}],\mathbf{v}_{k+1}),...,([\bar u_{k+m}],\mathbf{v}_{k+m}))$$ of length $k+m$ 
where 
\begin{enumerate}
    \item $u_i$ is a Morse trajectory from  $x_i$ to $x_{i+1}$  for  $i \in \{1,...,k-1\}$, $x_i \in \op{Crit}f$  for $i \in \{1,...,k\}$, and  $x_1 = x$;
    \item $[\bar u_{k}] \in {\mathcal{M}}_J(x_k,\bar\gamma_{k+1};\bold{\alpha}_{i})$, where $\bar\gamma_{k+1} \in R$;
    \item $[\bar u_{i}]$ is either an element in ${\mathcal{M}}_J(\bar\gamma_{i},\bar\gamma_{i+1};\bold{\alpha}_{i})$,
    or a pair in $$\mathcal M_{J}(\bar\gamma_i, \emptyset; \alpha_i) {}_{\op{ev}_{+\infty}}\times_{\op{ev}_{-\infty}} \mathcal M_{J}(\emptyset, \bar\gamma_{i+1}; \alpha'_{i}),$$  
    with  $\bar \gamma_i \in R$, for $i \in \{k+1,...,k+m\}$, and $\bar\gamma_{k+m+1}=\bar\gamma$; and 
    \item  $\mathbf v_i$ is a possibly empty set of holomorphic trees attached to $[\bar u_i]$  along $\Delta_i$  for $i \in \{k,...,k+m\}$.
\end{enumerate}
\cbu
Note that we are taking a standard Gromov compactification, so the limit of the $J$-holomorphic part does not contain any Morse trajectory between $\op{ev}_{+\infty}$ and $\op{ev}_{-\infty}$ in (3). \cb
Since $u_i$ is regular for $i\in \{1,...,k-1\}$, $$\ind x_i - \ind x_{i+1} \geq 1.$$
For $i \in \{k+1, ..., k+m\},$
\begin{itemize}
\item if $[\bar u_{i}] \in {\mathcal{M}}_J(\bar\gamma_{i},\bar\gamma_{i+1};\bold{\alpha}_{i})$ 
\begin{itemize}
\item if $\gamma_{i} \neq \gamma_{i+1}$, then by Proposition~\ref{prop: transversality of pearls} and Claim~\ref{index of gamma and index of u}
we have $$\ind \bar\gamma_i - \ind \bar\gamma_{i+1}\geq \ind \bar u_i + 2|\Delta_i|\geq 1+ 2|\Delta_i|;$$
 \item if $\gamma_i = \gamma_{i+1}$, then by Claim~\ref{claim: constant strip index} $$\ind\bar\gamma_i - \ind\bar\gamma_{i+1} \geq 2.$$
 \end{itemize}
\item if $[\bar u_{i}] \in \mathcal M_{J}(\bar\gamma_i, \emptyset; \alpha_i) {}_{\op{ev}_{+\infty}}\times_{\op{ev}_{-\infty}} \mathcal M_{J}(\emptyset, \bar\gamma_{i+1}; \alpha'_{i}),$ by Claim~\ref{RxR} $$\ind\bar\gamma_i - \ind\bar\gamma_{i+1} \geq 2.$$
 \end{itemize}
 For the piece $\bar u_k$, denote $\bar\gamma_{k+1} =(p,q)$. 
 \begin{itemize}
 \item If $\{p,q\} \cap  W^u(x_k) = \emptyset$, then by Proposition~\ref{prop: transversality of pearly trajectories} one gets $$\ind x_k - \ind\bar\gamma_{k+1}  \geq \op{vir dim}({\mathcal M}_J(x_k, \bar\gamma_{k+1}; \alpha_k)) + 2|\Delta| + 1\geq 2|\Delta| + 1.$$ 
 \item If $\{p,q\} \cap  W^u(x_k) \neq \emptyset$, by Claim~\ref{claim: empty to gamma} one obtains $$\ind x_k - \ind \bar\gamma_{k+1} \geq 3.$$
\end{itemize}
In summary, 
\begin{align*}
1& =  \ind x - \ind\bar\gamma \\
& = \sum_{i=1}^{k-1} (\ind x_i - \ind x_{i+1}) + (\ind x_{k} - \ind \bar\gamma_{k+1}) + \sum_{i=k+1}^{k+m} (\ind \bar\gamma_i -\ind \bar\gamma_{i+1}) \\
&\geq  k-1 + \min(3, 1+ 2|\Delta_k|)+ \sum_{i=k+1}^{k+m} \min(2, 1+ 2|\Delta_i|).
\end{align*}
We conclude $k = 1$, $m = 0$ and $\Delta_1= \emptyset$. Hence, $\bold u \in \mathcal M_J(x,\bar\gamma)$. 
\end{proof}

\begin{prop}\label{d square equals 0}
Under the same condition as Proposition~\ref{prop:d-is-defined.}, 
$d^{2}=0$.
\end{prop}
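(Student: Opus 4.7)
The plan is the standard boundary-of-a-$1$-manifold argument: for each ordered pair of generators $a,b$ of $CF^*(\iota)$ with $\ind b - \ind a = 2$, the coefficient $\langle d^2 a,\,b\rangle$ equals, mod $2$, the number of boundary points of a compactified $1$-dimensional moduli space of index-$2$ pearly trajectories from $a$ to $b$. Since a compact $1$-manifold has an even number of boundary points, the whole argument reduces to correctly identifying these boundary points with the $2$-step broken pearls that make up $d^2$.

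Concretely, the four matrix entries of $d^2$ split the check into cases according to the types of $a$ and $b$. For each case I would assemble the relevant $1$-dimensional moduli spaces: $\mathcal M(a,b)$ when both $a,b\in \op{Crit}f$, $\mathcal M_J(a,b;\alpha)$ or its fiber product with the appropriate (un)stable manifold when a strip is involved, and $\mathcal M_J(\bar\gamma_-,\bar\gamma_+;\alpha)$ in the strip–strip case. Transversality from Proposition~\ref{prop: transversality of pearls} and Proposition~\ref{prop: transversality of pearly trajectories} guarantees that, for generic $(J,g)\in \mathcal Z^{\op{reg}}$, the non-broken stratum is a smooth $1$-manifold. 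I would then apply Gromov compactness exactly as in Lemma~\ref{prop:d-is-defined.} and repeat its case-by-case index accounting with budget $2$ instead of $1$: every bad degeneration — non-constant disc-tree bubble with a branch jump of positive action (Claim~\ref{index of gamma and index of u}), ghost strip at a $(p,q)\leftrightarrow(q,p)$ node (Claim~\ref{claim: constant strip index}), breaking at an embedded point of $\iota$ (Claim~\ref{RxR}), or Morse-endpoint bubbling onto an (un)stable manifold (Claim~\ref{claim: empty to gamma}) — costs at least $2$ index units, so at most one can occur, and when one does it saturates the budget. A short check in each case then shows that the remaining components must be rigid and therefore sit in a zero-dimensional stratum rather than bounding a $1$-parameter family.

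The remaining boundary configurations are then exactly the genuine $2$-step broken pearls $a \to c \to b$ with an intermediate generator $c$ of the appropriate intermediate index, and the standard Floer–Morse gluing theorems (available in the exact setting and needing no Kuranishi perturbation, since strong positivity has already removed all sphere/disc-bubble obstructions, exactly as in \cite{alston2018exact} and \cite{biran4221quantum}) identify them bijectively with the $\Z_2$-terms of $\langle d^2 a,\,b\rangle$. I expect the main technical burden to be the strip–strip block $d_{RR}^2 + d_{RC}d_{CR}$: both the ghost-strip degeneration and the embedded-point degeneration already eat the full index budget of $2$, and one must argue carefully that neither configuration can arise as a limit of a $1$-parameter family of honest strips — the index saturation forces the remaining data to be rigid, making such a limit lie in a zero-dimensional, not $1$-dimensional, stratum. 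Once this bookkeeping is verified in each of the four matrix entries, summing boundary points mod $2$ around each compactified $1$-manifold yields $d^2=0$.
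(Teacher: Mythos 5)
Your proposal follows essentially the same route as the paper's own (sketched) proof: write $d^2$ in its four matrix blocks, realize each entry as the mod-$2$ count of boundary points of a compactified $1$-dimensional pearly moduli space, rerun the index accounting of Lemma~\ref{prop:d-is-defined.} with budget $2$ to exclude the bad degenerations via Claims~\ref{index of gamma and index of u}--\ref{RxR}, and invoke standard gluing to identify the surviving boundary with two-step broken trajectories. The delicate point you flag --- that the ghost-strip and embedded-point degenerations saturate the index budget of $2$ and must be shown not to bound the $1$-parameter family --- is exactly the issue the paper leaves implicit in its ``similar argument'' remark, so your treatment is at the same level of completeness as the original.
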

\begin{proof}[Sketch of proof]
The proof is a standard argument by studying the boundary of certain $1$-dim moduli spaces.
Note that
\[
d^{2}=\left(\begin{array}{cc}
d_{\bold{CC}}^{2}+d_{\bold{CR}}d_{\bold{RC}} & d_{\bold{CC}}d_{\bold{CR}}+d_{\bold{CR}}d_{\bold{RR}}\\
d_{\bold{RC}}d_{\bold{CC}}+d_{\bold{RR}}d_{\bold{RC}} & d_{\bold{RC}}d_{\bold{CR}}+d_{\bold{RR}}^{2}
\end{array}\right).
\]
For example, to show that $d_{\bold{RC}}d_{\bold{CC}}+d_{\bold{RR}}d_{\bold{RC}} = 0$, we look at the boundary (in the sense of Gromov's compactification) of the moduli space $\mathcal M_J(x, \bar\gamma)$ with $x\in \op{Crit}f$, $\bar\gamma\in R$ and $\ind x - \ind \bar\gamma = 2$.
Using a similar argument as in the proof of Proposition~\ref{prop:d-is-defined.} to rule out ``bad'' degenerations and standard gluing results (Section 4 in \cite{piunikhin1996symplectic}), 
we show that all the boundary components of $\mathcal M_J(\bar\gamma, x)$ are given by gluing 
$\mathcal M_J(\bar\gamma, x')$ and $\mathcal M(x', x)$ for all $x'\in \op{Crit}f$ with $\ind x' = \ind x + 1$, 
or gluing
$\mathcal M_J(\bar\gamma, \bar\gamma')$ and $\mathcal M_J(\bar\gamma', x)$ for all $\bar\gamma' \in R$ with $\ind \bar \gamma' = \ind\bar\gamma + 1$. See Figure~\ref{fig: dMR}.
\end{proof}

\begin{figure} 
\centering
\begin{tikzpicture}[scale = 0.7, every node/.style={scale=0.6}]
\draw[->] (2,1) -- (2.5,1);
\draw (2.5,1) -- (3,1);
\filldraw (3,1) circle [radius = 0.05];
\draw[->] (3,1) -- (4,1);
\draw (4,1) -- (5,1);
\node[above] at (3,1.2) {$x'$};
\node[right] at (5.1, 1) {$x$};
\draw 
(0, 1) .. controls (0.5, 1.5) and (2,2) .. (2, 1)
(0, 1) .. controls (0.5,0.5) and (2, 0) .. (2,1);
\node[left] at (0,1) {$\bar\gamma$};
\node[right] at (0,1) {$-$};
\draw (0, 1) circle [radius = 0.05];

\draw (6, 1) .. controls (6.5, 1.7) and (8.5,1.7) .. (9, 1)
(6,1) .. controls (6.5,0.3) and (8.5, 0.3) .. (9,1);
\node[above] at (9,1.2){$\bar\gamma'$};
\node[left] at (6,1) {$\bar\gamma$};
\node[right] at (6,1) {$-$};
\node[left] at (9,1) {$+$};
\node[right] at (9,1) {$-$};

\draw (6,1) circle [radius = 0.05];
\draw[->] (11,1) -- (11.5,1);
\draw (11.5,1) -- (12,1);
\draw (9,1) circle [radius = 0.05];
\node[right] at (12.2,1) {$x$};
\filldraw (12,1) circle [radius = 0.05];
\draw 
(9, 1) .. controls (9.5, 1.5) and (11,2) .. (11, 1)
(9, 1) .. controls (9.5, 0.5) and (11, 0) .. (11, 1);
\node[right] at (0,1) {$-$};
\draw (0, 1) circle [radius = 0.05];
\filldraw (5,1) circle [radius = 0.05];

\end{tikzpicture}
\caption[]{$\partial \mathcal M_J(\bar\gamma, x)$}
\label{fig: dMR}
\end{figure}
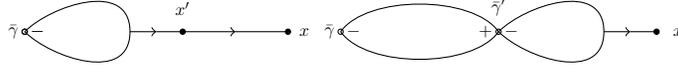

%
%
%
%
%

We define the Lagrangian Floer cohomology of $\iota$ by $HF^{*}(\iota) = \ker d/\op{im}d $.

\begin{proof}[Proof of Corollary~\ref{rank inequality}]
Denote $\bold R^{+} := \Z_2 \{ (p,q) \in  R ~|~ \mathcal A(p,q) \geq  0 \}$ and 
$\bold R^{-} :=\Z_2 \{(p,q) \in R ~|~ \mathcal A(p,q) < 0\}$, and then $\bold R = \bold R^{+} \oplus \bold R^{-}$. 
\cbu
We give the complex $\mathfrak C = CF^*(\iota)$ an filtration 
\[\mathfrak C = \mathcal F^0 \mathfrak C \supseteq \mathcal F^1 \mathfrak C \supseteq  \mathcal F^2 \mathfrak C \supseteq 0\]
by 
$\mathcal F^1 \mathfrak C = \bold C \oplus \bold{R}^{-}$,
$ \mathcal F^2 \mathfrak C  = \bold R^- $.
Then we denote by $E^{a,b}_r$ the spectral sequence associated to $\mathcal F^* \mathfrak C$. 
In particular, $E_0^{0,b} = (\bold R^{+})^b$, $E_0^{1,b} = \bold C^{1+b}$, and $E_0^{2,b} = (\bold R^-)^{2+b}$.
At the $E_1$ page, $E_1^{a,b} = 0$ for $a \notin \{0,1,2\}$,
and for $a\in \{0,1,2\}$, it is given by:
\[
 E_1^{0,b} = H^b(\bold R^+) \overset{d_1^{0,b}}{\to} E_1^{1,b} = H^{1+b}(\bold C) \overset{d_1^{1,b}}{\to} E_1^{2,b} = H^{2+b}(\bold R^-),
\]
where the cohomologies are calculated using the induced differential by $d$ on the homogeneous summands.
Then we have 
\begin{align*}
\op{rank} E_2^{1,b} \geq & \op{rank} H^{1+b}(\bold C) - \op{rank} H^b(\bold R^+) - \op{rank} H^{2+b}(\bold R^-) \\
= &  \op{rank} H^{1+b}(L;\Z_2) - \op{rank} H^b(\bold R^+) - \op{rank} H^{2+b}(\bold R^-).
\end{align*}
On the other hand, since the spectral sequence converges to $HF^*(\iota)$ and $E_2^{1,b} = E_{\infty}^{1,b}$, we have $$\op{rank} E_2^{1,b} \leq \op{rank} E_{\infty}^{0, b+1} + \op{rank} E_{\infty}^{1,b} + \op{rank} E_{\infty}^{2, b-1} =  \op{rank} HF^{1+b} (\iota).$$
Now putting these together and summing over $b$ gives us 
\begin{align*}
& \sum_b \op{rank} HF^{1+b} (\iota) \\
 \geq & \sum_b  \op{rank} H^{1+b}(L;\Z_2) - \sum_b \op{rank} H^b(\bold R^+) - \sum_b \op{rank} H^{2+b}(\bold R^-) \\
  \geq  & \sum_b  \op{rank} H^{1+b}(L;\Z_2) - |R|
\end{align*}
\cb
\end{proof}

\section{Lagrangian Floer cohomology via Hamiltonian perturbation}\label{lagrangian floer via hamiltonian perturbation}
In this section, we recall $HF^*_H(\iota)$ an alternative definition of Lagrangian Floer cohomology using Hamiltonian perturbation, 
which is an invariant of $\iota$ under Hamiltonian deformation, 
and show that $HF^*(\iota)$ is isomorphic to $HF^*_H(\iota).$
As a consequence, $HF^*(\iota)$ is independent of choices and is invariant under Hamiltonian deformation.

We define the Hamiltonian perturbed Lagrangian Floer cochain complex by $CF_H^* (\iota) = (\Z_2 P_H, d_H)$,
where $\Z_2 P_H$ is the free $\Z_2$-module generated by Hamiltonian paths $P_H$ and the differential $d_H$ is defined by 
\[
d_H \bar\gamma = \sum_{\ind \bar\gamma' = \ind \bar\gamma + 1} \sharp \mathcal M_{J,H}(\bar\gamma', \bar\gamma)\cdot \bar\gamma'.
\]
Then we define $HF^*_H(\iota) = \ker d_H / \op{im}d_H$. 
\begin{thm}[\cite{alston2018exact}] \label{thm: Hamiltonian perturbed Floer}
For an admissible $H$, there exists a Baire set $\mathcal J^{\op{reg}}_H$ of compatible almost complex structures, such that for any $J\in \mathcal J^{\op{reg}}_H$ the Hamiltonian perturbed Lagrangian Floer cohomology $HF_H^* (\iota)$ is well-defined, independent of the choice of $H$ and $J$, and invariant under Hamiltonian deformation.
\end{thm}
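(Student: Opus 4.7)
The plan is to establish the three claims of the theorem---well-definedness of $d_H$, independence of $(H,J)$, and Hamiltonian invariance---by adapting the standard Floer-theoretic machinery to the immersed, Hamiltonian-perturbed setting, much as in the proofs of Lemma~\ref{prop:d-is-defined.} and Proposition~\ref{d square equals 0} but with admissibility of $H$ replacing the strong positivity condition.

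First I would fix an admissible $H$ and choose $J$ from a Baire set $\mathcal J_H^{\op{reg}}$, using Proposition~\ref{prop: transversality of pearls} to guarantee that the index-$1$ moduli spaces $\mathcal M_{J,H}(\bar\gamma',\bar\gamma)$ are smooth $0$-dimensional manifolds. To see these are finite, I would Gromov-compactify and rule out all non-principal boundary strata: exactness of $\iota$ kills sphere bubbles and all disc bubbles with no branch jump, while the weak positivity condition, through the analogue of Claim~\ref{index of gamma and index of u}, ensures that any branch-jumping disc tree bubbling off a strip absorbs at least $2$ units of index, which is inconsistent with an index-$1$ degeneration. Because $H$ is admissible the two endpoint chords differ, so the constant-strip phenomenon of Claim~\ref{claim: constant strip index} does not occur and strong positivity is unnecessary.

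Next, for $d_H^2=0$ I would study the $1$-dimensional moduli spaces with $\ind\bar\gamma'-\ind\bar\gamma=2$. Transversality together with the same bubbling exclusions makes these smooth manifolds whose Gromov compactifications are manifolds with boundary. Standard gluing (in the style of Section~4 of \cite{piunikhin1996symplectic}) identifies the boundary with broken trajectories $(\bar u_1,\bar u_2)$ through an intermediate chord $\bar\gamma''$, so the mod-$2$ count of boundary points---equal to the coefficient of $\bar\gamma$ in $d_H^2\bar\gamma'$---vanishes.

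Third, for independence of $(J,H)$ and Hamiltonian invariance I would use continuation maps. Given two admissible triples $(J_i,H_i)$, choose an interpolating family $(\mathbf J,\mathbf H)$ with $\partial_s\mathbf J,\partial_s\mathbf H$ compactly supported in $s$, and define $\Phi\colon CF^*_{H_0}(\iota)\to CF^*_{H_1}(\iota)$ by counting index-$0$ elements of $\widetilde{\mathcal M}_{\mathbf J,\mathbf H}(\bar\gamma_-,\bar\gamma_+)$, using Proposition~\ref{prop: transversality of pearls 2} for transversality. The energy identity contributes an extra $\int\partial_s\mathbf H$ term that is still bounded, so weak positivity again excludes bubbling. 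Analyzing the boundary of the $1$-dimensional version shows $\Phi$ is a chain map; a homotopy-of-homotopies argument shows $\Phi$ is canonical up to chain homotopy; and comparing the concatenated homotopy with a constant homotopy shows $\Phi$ is a quasi-isomorphism. Hamiltonian invariance then follows by pushing the Floer data forward under a Hamiltonian isotopy $\phi^K_t$ and reducing to independence of $H$.

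The hard part will be verifying that weak positivity plus exactness really suffices to exclude all bubbling in both the $s$-independent and $s$-dependent cases, particularly the hybrid configurations where the strip breaks simultaneously at a nodal point of the symplectic manifold and at an $R$-type branch jump. This requires a careful case analysis of Gromov limits and the resulting index arithmetic, analogous to the bookkeeping in Lemma~\ref{prop:d-is-defined.} but easier, since admissibility of $H$ forbids the ghost-strip degenerations that motivated the stronger positivity condition in the pearly setting.
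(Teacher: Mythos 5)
Your outline is correct and follows the same strategy the paper attributes to \cite{alston2018exact}, which it cites for this theorem without reproving it: weak positivity plus the index bookkeeping of Claim~\ref{index of gamma and index of u} excludes branch-jumping disc bubbles from index-$1$ and index-$2$ moduli spaces, exactness kills the remaining bubbles, admissibility of $H$ removes the ghost-strip issue, and the standard transversality, gluing, and continuation-map arguments then give $d_H^2=0$, independence of $(J,H)$, and Hamiltonian invariance. No gaps beyond the analytic details one would expect to defer to the cited reference.
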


The isomorphism between $HF^*(\iota)$ and $HF^*_H(\iota)$ is a type of PSS isomorphism (see \cite{piunikhin1996symplectic}).
Let $(J,g)\in \mathcal Z^{\op{reg}}$ as in Proposition~\ref{prop:d-is-defined.},
and  $(H, J')\in \mathcal J^{\op{reg}}_H $ as in Theorem~\ref{thm: Hamiltonian perturbed Floer}.
Let $(\bold H, \bold J)$ be a smooth interpolation from $(0, J)$ to $(H,J')$.
To show that $HF^*(\iota)$ and $HF^*_H(\iota)$ are isomorphic, we construct chain maps between them, and show that the composition is chain homotopic to an isomorphism. 
Denote $\bold \Gamma := CF_H^* (\iota)$ and $CF^*(\iota) = \bold C \oplus \bold R$.
Then we define $\Phi : CF_H^* (\iota) \to CF^*(\iota)$ by defining $\Phi_{\bold C}: \bold \Gamma\to \bold C$ and $\Phi_{\bold R}:\bold \Gamma\to \bold R$ as follows.

For $\bar\gamma\in P_H$, we define 
\begin{enumerate}
\item $$
\Phi_{\bold C}(\bar\gamma)=\sum_{x\in \op{Crit}f, \ind x = \ind \bar\gamma} \sharp \widetilde{\mathcal M}_{\bold J, \bold H}(x, \bar\gamma) \cdot x.
$$
See Figure~\ref{fig: three trajectories} (I) and formula \eqref{eq:moduli space for chain map case I}.
\item  $$
\Phi_{\bold R,1}(\bar\gamma)=\sum_{\bar\gamma' \in R, \ind \bar\gamma' = \ind \bar\gamma} \sharp \widetilde{\mathcal M}_{\bold J, \bold H}(\bar\gamma', \bar\gamma) \cdot \bar\gamma'.
$$
See Figure~\ref{fig: three trajectories} (II) with $\bar\gamma' = (p,q)$ and formula \eqref{eq:moduli space for chain map case II}.
\item $$
\Phi_{\bold R,2}(\bar\gamma)=\sum_{\bar\gamma' \in R, \ind \bar\gamma' = \ind \bar\gamma} \sharp \widetilde{\mathcal M}^{\text{pearl}}_{\bold J, \bold H}(\bar\gamma', \bar\gamma) \cdot \bar\gamma'.
$$
See Figure~\ref{fig: three trajectories} (III) with $\bar\gamma' = (p,q)$ and formula \eqref{eq:moduli space for chain map case III}.
\item $\Phi_{\bold R} = \Phi_{\bold R,1} + \Phi_{\bold R, 2}$.
\end{enumerate}
We similarly define $\Psi : CF^*(\iota) \to CF_H^* (\iota)$ by counting the moduli spaces in formulas \eqref{eq:moduli space for chain map case I opposite}, \eqref{eq:moduli space for chain map case II opposite}, and \eqref{eq:moduli space for chain map case III opposite}.
Note that $\Phi$ counts trajectories from generators of $CF^*(\iota)$ to generators of $CF^*_H(\iota)$ while $\Psi$ counts trajectories  from generators of $CF^*_H(\iota)$ to generators of $CF^*(\iota)$.

\begin{prop}\label{prop: chain map}
There exists a Baire set $\mathbfcal{Z}^{\op{reg}}$ of pairs of $(\bold H, \bold J)$ connecting $(0, J)$ to $(H,J')$, such that 
for any $(\bold H, \bold J)\in \mathbfcal{Z}^{\op{reg}}$, $\Phi$ is well-defined and satisfies $\Phi d_H=d\Phi.$
Moreover, the chain map $\Phi'$ defined using a different choice of $(\bold H', \bold J')\in \mathbfcal{Z}^{\op{reg}}$ is chain homotopic to $\Phi$.

 \end{prop}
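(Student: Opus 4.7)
The plan is to establish the three claims in sequence: (a) well-definedness of $\Phi$ for generic $(\bold J, \bold H) \in \mathbfcal{Z}^{\op{reg}}$, (b) the chain map identity $\Phi d_H = d\Phi$, and (c) invariance under the choice of interpolating data up to chain homotopy.

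For (a), transversality of the $0$-dimensional moduli spaces $\widetilde{\mathcal M}_{\bold J, \bold H}(x, \bar\gamma)$ and $\widetilde{\mathcal M}_{\bold J, \bold H}(\bar\gamma', \bar\gamma)$ is given directly by Proposition~\ref{prop: transversality of pearls 2}, while the pearl version $\widetilde{\mathcal M}^{\text{pearl}}_{\bold J, \bold H}(\bar\gamma', \bar\gamma)$ is transversely cut out by combining this with the Morse-theoretic fiber-product argument of Proposition~\ref{prop: transversality of pearly trajectories}. The core issue is compactness: I would run the same case analysis used in the proof of Lemma~\ref{prop:d-is-defined.}. Given a sequence in one of these moduli spaces, Gromov compactness and the exactness of $\iota$ yield a broken limit consisting of Morse trajectories, honest $J$-holomorphic strips, at most one $\bold H$-perturbed strip (the $s$-dependence breaks $\R$-translation symmetry there), pearl segments, and trees of $J$-holomorphic discs attached at branch-jump marked points. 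The strong positivity condition together with Claims~\ref{index of gamma and index of u}, \ref{claim: constant strip index}, and \ref{RxR} bound the codimension of each component from below, so starting from virtual dimension $0$ no nontrivial breaking can occur and the sequence stays in the top stratum.

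For (b), the identity $\Phi d_H - d \Phi = 0$ splits into two component equations according to the decomposition $CF^*(\iota) = \bold C \oplus \bold R$. Each is obtained by analyzing the boundary of appropriate $1$-dimensional moduli spaces. For the $\bold C$-component I would examine the compactified $1$-dimensional $\widetilde{\mathcal M}_{\bold J, \bold H}(x, \bar\gamma)$ and identify its boundary strata via Gromov compactness and standard gluing (Section 4 in \cite{piunikhin1996symplectic}): a Morse breaking at some $x'$ contributes to $d_{\bold{CC}} \Phi_{\bold C}$; an $\bold H$-perturbed strip breaking at an intermediate Hamiltonian chord contributes to $\Phi_{\bold C} d_H$; a $J$-holomorphic strip with output $\bar\gamma' \in R$ splitting off on the $-\infty$ side contributes to $d_{\bold{CR}} \Phi_{\bold R, 1}$; and the same split with a finite-length Morse segment in between contributes to $d_{\bold{CR}} \Phi_{\bold R, 2}$. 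Strong positivity rules out ghost components and other bad degenerations exactly as in Lemma~\ref{prop:d-is-defined.}. For the $\bold R$-component the key observation is that $\widetilde{\mathcal M}_{\bold J, \bold H}(\bar\gamma', \bar\gamma)$ and $\widetilde{\mathcal M}^{\text{pearl}}_{\bold J, \bold H}(\bar\gamma', \bar\gamma)$ must be viewed as two strata of a single compactified moduli space glued along the locus $\tau = 0$ where the Morse segment of the pearl collapses; these internal boundaries cancel, and the remaining true boundary decomposes into pieces matching $\Phi_{\bold R} d_H$, $d_{\bold{RR}} \Phi_{\bold R}$ and (from the $\tau \to \infty$ end of the pearl) $d_{\bold{RC}} \Phi_{\bold C}$.

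For (c), given $(\bold J, \bold H), (\bold J', \bold H') \in \mathbfcal{Z}^{\op{reg}}$ I would choose a generic smooth $[0,1]$-family $(\bold J_\lambda, \bold H_\lambda)$ connecting them and define a chain homotopy $K : CF_H^*(\iota) \to CF^{*-1}(\iota)$ by counting, with $\lambda$ as an extra parameter, virtual-dimension-$(-1)$ trajectories in the parametrized moduli spaces (non-pearl and pearl). The homotopy equation $\Phi' - \Phi = d K + K d_H$ then follows from boundary analysis of the $\lambda$-parametrized $1$-dimensional spaces, entirely parallel to (b) but with one additional boundary contribution from $\lambda \in \{0, 1\}$. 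The main obstacle throughout is the bookkeeping around the pearl stratum in (b): one must verify that the $\tau = 0$ interface between $\widetilde{\mathcal M}^{\text{pearl}}$ and $\widetilde{\mathcal M}$ is a genuine two-sided boundary so that its contributions cancel, and simultaneously rule out ghost-strip degenerations supported in the interpolation region where $\bold H$ transitions from $0$ to $H$, via an index inequality modeled on Claim~\ref{claim: constant strip index} and using strong positivity.
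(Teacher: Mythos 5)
Your overall strategy matches the paper's: well-definedness via the compactness/positivity case analysis of Lemma~\ref{prop:d-is-defined.}, the chain map identity via boundaries of the $1$-dimensional moduli spaces $\widetilde{\mathcal M}_{\bold J, \bold H}(x, \bar\gamma)$, $\widetilde{\mathcal M}_{\bold J, \bold H}(\bar\gamma'', \bar\gamma)$, $\widetilde{\mathcal M}^{\text{pearl}}_{\bold J, \bold H}(\bar\gamma'', \bar\gamma)$, and a parametrized-family argument for independence of the interpolating data. Your handling of the $\bold R$-component---gluing the pearl and non-pearl $1$-dimensional moduli spaces along the $\tau=0$ interface into a single compact $1$-manifold, so that the embedded-breaking locus becomes an internal wall that cancels structurally---is equivalent to the paper's presentation, which instead runs the two boundary arguments separately and observes that the contribution $n_{\bar\gamma,\bar\gamma''}$ (counting a $J$-strip attached to an $\bold H$-strip at an embedded point) appears in both and cancels mod $2$; your phrasing is arguably cleaner.

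However, one step of your $\bold C$-component analysis is geometrically wrong, even though it is numerically harmless. You claim that $\partial\widetilde{\mathcal M}_{\bold J, \bold H}(x, \bar\gamma)$ contains a stratum contributing to $d_{\bold{CR}}\Phi_{\bold R, 2}$ (``the same split with a finite-length Morse segment in between''). The configuration encoded by $d_{\bold{CR}}\Phi_{\bold R,2}$ consists of a semi-infinite Morse trajectory, a $J$-strip into $\bar\gamma'$, a $J$-strip out of $\bar\gamma'$, a finite Morse segment, and an $\bold H$-strip: that is five pieces where the interior of $\widetilde{\mathcal M}_{\bold J, \bold H}(x, \bar\gamma)$ has two, so it lives in codimension $\geq 3$ and is not a boundary stratum of a $1$-manifold. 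The paper instead notes $d_{\bold{CR}}\Phi_{\bold R, 2}=0$ outright: any $\bar\gamma'$ output by $\Phi_{\bold R,2}$ bounds a non-constant $J$-strip in $\mathcal M_J(\bar\gamma',\emptyset)$, forcing $\mathcal A(\bar\gamma')>0$, while any $\bar\gamma'$ with $d_{\bold{CR}}\bar\gamma' \neq 0$ bounds one in $\mathcal M_J(\emptyset,\bar\gamma')$, forcing $\mathcal A(\bar\gamma')<0$. Your identity therefore still holds since the term vanishes, but the boundary stratum you cite does not exist; in a setting where that term were nonzero the same slip would miscount the boundary.
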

\begin{proof}[Sketch of proof]
A similar argument as the proof of Proposition \ref{prop:d-is-defined.}
shows that $\Phi$ is well-defined. 

The chain map equation $(\Phi d_H + d\Phi)\bar\gamma =0$ can be written as 
\begin{align*}
(\Phi_{\bold C}+\Phi_{\bold R, 1} + \Phi_{\bold R, 2})d_H\bar\gamma + (d_{\bold{CC}} + d_{\bold {RC}})\Phi_{\bold C}\bar\gamma &\\ 
+(d_{\bold{CR}} +d_{\bold{RR}})\Phi_{\bold R,1} \bar\gamma + (d_{\bold{CR}} +d_{\bold{RR}})\Phi_{\bold R,2} \bar\gamma  & = 0.
\end{align*}
First, for any $\bar\gamma'$ in the definition of $\Phi_{\bold R,2}$, we have $\mathcal A(\bar\gamma') > 0$, and hence  $$ d_{\bold{CR}} \Phi_{\bold R, 2}\bar\gamma = 0.$$
It is enough to show that for any $x\in \text{Crit}f$ with $\ind x = \ind \bar\gamma + 1$, we have 
\begin{equation} \label{cat}
\langle (\Phi_C d_H + d_{\bold{CC}}\Phi_C + d_{\bold{CR}}\Phi_{\bold R,1})\bar\gamma, x\rangle = 0
\end{equation}
and for any $\bar\gamma'' \in R$ with $\ind\bar\gamma'' = \ind \bar\gamma + 1$,
$$\langle ((\Phi_{\bold R,1} + \Phi_{\bold R,2})d_H + d_{\bold{RC}}\Phi_{\bold C} + d_{\bold{RR}}(\Phi_{\bold R,1} + \Phi_{\bold R,2}))\bar\gamma,\bar\gamma''\rangle = 0.$$
We look at the boundaries of the $1$-dimensional moduli spaces $\widetilde{\mathcal M}_{\bold J, \bold H}(x, \bar\gamma)$,  $\widetilde{\mathcal M}_{\bold J, \bold H}(\bar\gamma'', \bar\gamma)$, and  $\widetilde{\mathcal M}^{\text{pearl}}_{\bold J, \bold H}(\bar\gamma'', \bar\gamma)$. 
A similar index calculation using positivity assumption as in the proof of Proposition ~\ref{prop:d-is-defined.}  and standard gluing results show that counting $\partial \widetilde{\mathcal M}_{\bold J, \bold H}(x, \bar\gamma)$ gives  Equation~\ref{cat}.
Similarly, $\partial \widetilde{\mathcal M}_{\bold J, \bold H}(\bar\gamma'', \bar\gamma)$ corresponds to 
$$\langle \Phi_{\bold R, 1}d_H \bar\gamma + d_{\bold{RR}}\Phi_{\bold R,1}\bar\gamma, \bar\gamma'' \rangle+ n_{\bar\gamma,\bar\gamma''}, $$
where $n_{\bar\gamma,\bar\gamma''} := \sharp  \mathcal M_J(\bar\gamma'', \emptyset)_{\op{ev}_{+\infty}}  \times_{\op{ev}_{-\infty}} \widetilde{\mathcal M}_{\bold J, \bold H}(\emptyset, \bar\gamma)$.
Finally, using Remark~\ref{lem:Under-strong-positivity} again we see that $\partial \widetilde{\mathcal M}^{\text{pearl}}_{\bold J, \bold H}(\bar\gamma', \bar\gamma) $ corresponds to $$\langle \Phi_{\bold R,2}d_H\bar\gamma + d_{\bold{RC}}\Phi_{\bold C} \bar\gamma + d_{\bold{RR}}\Phi_{\bold R, 2}\bar\gamma, \bar\gamma'' \rangle + n_{\bar\gamma,\bar\gamma''}.$$
The statement that $\Phi$ and $\Phi'$ are chain homotopic follows from a standard argument by considering the boundary of certain moduli spaces arising from a homotopy from $(\bold J, \bold H)$ to $(\bold J', \bold H')$.
We leave this to the reader.
\end{proof}

\begin{thm}\label{thm: iso}
The maps $\Phi$ and $\Psi$ induce isomorphisms between $HF_H^*(\iota)$ and $HF^*(\iota)$. In particular, $HF^*(\iota)$ is  an invariant of $\iota$ under Hamiltonian deformation.
\end{thm}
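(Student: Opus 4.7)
By Proposition~\ref{prop: chain map}, the maps $\Phi$ and $\Psi$ are well-defined chain maps whose chain-homotopy classes are independent of the interpolating data. It therefore suffices to construct chain homotopies $\Psi\circ\Phi\simeq \op{id}_{CF_H^*(\iota)}$ and $\Phi\circ\Psi\simeq\op{id}_{CF^*(\iota)}$; both will be built by the standard PSS procedure of deforming a concatenated interpolation into a constant one through a one-parameter family.

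To show $\Psi\circ\Phi\simeq\op{id}_{CF_H^*(\iota)}$, I would choose a smooth family $\{(\bold J^R,\bold H^R)\}_{R\in [0,\infty)}$ of $s$-dependent data on the strip, all asymptotic to $(J',H)$ as $|s|\to\infty$, with $(\bold J^0,\bold H^0)\equiv (J',H)$ and, for large $R$, an interpolation running $(J',H)\to (J,0)$ via the $\Psi$-data on $(-\infty,-R]$, remaining at $(J,0)$ on $[-R,R]$, and running $(J,0)\to (J',H)$ via the $\Phi$-data on $[R,\infty)$. Let $\Xi_R:CF_H^*(\iota)\to CF_H^*(\iota)$ count rigid index-zero elements of the corresponding parametrized moduli spaces, allowing the pearly attachments from Section~\ref{lagrangian floer via pearly trajectories} along the middle plateau. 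A standard gluing argument identifies $\Xi_R$ with $\Psi\circ\Phi$ for all sufficiently large $R$, while the total parametrized space $\bigsqcup_{R\in[0,R_0]}\widetilde{\mathcal M}_{\bold J^R,\bold H^R}(\bar\gamma_-,\bar\gamma_+;\bold\alpha)$ in virtual dimension one furnishes a chain homotopy between $\Xi_0$ and $\Xi_{R_0}$. At $R=0$ the interpolation has an extra $\R$-translation symmetry, so a rigid element of a $0$-dimensional moduli space must be $s$-independent; this forces $\bar\gamma_-=\bar\gamma_+$, $\bold\alpha=\emptyset$, and $u(s,t)=\gamma(t)$, whence $\Xi_0=\op{id}_{CF_H^*(\iota)}$. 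The argument for $\Phi\circ\Psi\simeq\op{id}_{CF^*(\iota)}$ is parallel, now with interpolations asymptotic to $(J,0)$ at both $\pm\infty$ and a plateau at $(J',H)$ in the middle; at $R=0$ the rigid index-zero contributions reduce to zero-length Morse trajectories on the $\bold C$-block and constant strips without branch jumps (forcing $\bar\gamma=\bar\gamma'$) on the $\bold R$-block, both giving the identity, while for large $R$, gluing at the intermediate Hamiltonian chord identifies the count with $\Phi\circ\Psi$. Invariance of $HF^*(\iota)$ under Hamiltonian isotopy then follows from the resulting isomorphism together with Theorem~\ref{thm:hamiltonian hf}.

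The principal obstacle will be compactness of the parametrized one-dimensional moduli spaces underlying the chain homotopies: every codimension-one degeneration must correspond either to the differentials on $CF^*(\iota)$ or $CF_H^*(\iota)$ acting at one end, or to the parameter reaching an endpoint of $[0,R_0]$. The dangerous boundary phenomena are exactly those addressed in the proof of Lemma~\ref{prop:d-is-defined.} and Proposition~\ref{d square equals 0}: disc bubbling at a self-intersection (excluded by strong positivity, Condition~\ref{strong positivity}), ghost-strip breaking with a branch-jump node (excluded by Claim~\ref{claim: constant strip index}), and breaking at an embedded point of $\iota$ (excluded by Claim~\ref{RxR}). Transversality for the parametrized families is a one-parameter extension of Propositions~\ref{prop: transversality of pearls}--\ref{prop: transversality of pearls 2}, obtained by the same somewhere-injectivity argument applied to the enlarged space of perturbation data.
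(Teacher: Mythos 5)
Your proposal is correct and follows essentially the same route as the paper: a PSS-type argument that glues the continuation data for $\Phi$ and $\Psi$, deforms the concatenated data to the constant (unperturbed, respectively $H$-perturbed) data, and identifies the resulting rigid counts with the identity via exactness/translation invariance, with strong positivity, Claim~\ref{claim: constant strip index}, and Claim~\ref{RxR} controlling the compactness of the parametrized moduli spaces. The only notable difference is organizational: the paper decomposes $\Phi\circ\Psi$ into its four matrix blocks and treats the mixed pearl terms $\Phi_{\bold R,2}\circ\Psi_{\bold R,1}$ etc.\ explicitly via Remark~\ref{lem:Under-strong-positivity} (and sketches only that one composition), whereas you run the neck-length homotopy for both compositions uniformly.
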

\begin{proof}[Sketch of proof]
\cbu 
The plan is to show that $\Phi \circ \Psi $ is homotopic to the identity map $\op{Id}_{CF^* (\iota)}$
and $\Psi \circ \Phi$ is homotopic to the identity map $\op{Id}_{CF^*_H (\iota)}$.

Let $(\bold H, \bold J)$ be a homotopy from $(0, J)$ to $(H, J')$ as in Proposition~\ref{prop: chain map}, 
such that
\be
\item $\bold H = 0$ if $s \leq -1$ and $\bold H = H$ if $s \geq 1$,
\item $\bold J = J$ if $s \leq -1$ and $\bold J = J'$ if $s \geq 1$;
\ee
and $(\bar{\bold H}, \bar{\bold J})$ be a homotopy from $(H,J')$ to $(0,J)$
such that 
\be
\item $\bar{\bold H} = H$ if $s \leq -1$ and $\bar{\bold H} = 0$ if $s \geq 1$,
\item $\bar{\bold J} = J'$ if $s \leq -1$ and $\bar{\bold J} = J$ if $s \geq 1$.
\ee 
\n \\
\noindent
{\bf Step 1. $\Phi \circ \Psi $ is homotopic to $\op{Id}_{CF^* (\iota)}$.} \cb
Intuitively we achieve this by gluing the moduli spaces in the definition of $\Psi$ and $\Phi$,
and then homotoping the Hamiltonian to $0$, and the $s$-dependent almost complex structure to an $s$-independent one.

Similar as the definition of $\Phi$ (and also $\Phi_{\bold C}, \Phi_{\bold R,1},$ and $\Phi_{\bold R, 2}$), we have corresponding maps in the reverse directions: 
\begin{align*}
\Psi_{\bold C}:& \bold C \to  CF_H^* (\iota), &  \Psi_{\bold R,1}: & \bold R \to  CF_H^* (\iota), & \Psi_{\bold R,2}:&\bold R \to CF_H^*(\iota),
\end{align*}
\begin{align*}
 \Psi_{\bold R}  = & \Psi_{\bold R,1}+\Psi_{\bold R,2}, & 
 \Psi = & \Psi_{\bold R}  \oplus \Psi_{\bold C} : \bold R \oplus \bold C \to CF_H^*(\iota). 
\end{align*}

\cbu 

We will define a chain homotopy $K: CF^*(\iota) \to CF^*(\iota)$ and show that 
\begin{equation} \label{eqn: PHP}
    \Phi \circ \Psi - \op{Id}_{CF^*(\iota)} = [K, d].
\end{equation}
We choose a glued Hamiltonian by $\widetilde{\mathbb H}_\lambda:= \bold H \sharp_\lambda \bar{\bold H}$ such that 
\be 
\item for $\lambda \geq 1$,  
    $\widetilde{\mathbb H}_\lambda (s) =
    \begin{cases}
      \bold H(s + \lambda) & \text{if $s \leq 0$ }\\
      \bar{\bold H}(s - \lambda) & \text{if $s \geq 0$ }
    \end{cases}$,
\item for $\lambda \leq -1$,  $\widetilde{\mathbb H}_\lambda = 0$,
\item $\{\widetilde{\mathbb H}_\lambda\}_{\lambda \in [-1, 1]} $ is a smooth homotopy from $\widetilde{\mathbb H}_{-1}$ to $\widetilde{\mathbb H}_1$. 
\ee 
Let $\mathbb H_\lambda$ be a generic $C^\infty$-perturbation of $\widetilde{\mathbb H}_\lambda$ such that the perturbation goes to $0$ as $\lambda$ or $s$ goes to $\infty.$
Similarly, we define a glued almost complex structure by $\widetilde{\mathbb J}_\lambda:= \bold J \sharp_\lambda \bar{\bold J}$ by 
\be 
\item for $\lambda \geq 1$,  
    $\widetilde{\mathbb J}_\lambda (s) =
    \begin{cases}
      \bold J(s + \lambda) & \text{if $s \leq 0$ }\\
      \bar{\bold J}(s - \lambda) & \text{if $s \geq 0$ }
    \end{cases}$,
\item for $\lambda \leq -1$, $\widetilde{\mathbb J}_\lambda = J,$
\item $\{\widetilde{\mathbb J}_\lambda\}_{\lambda \in [-1, 1]} $ is a smooth homotopy from $\widetilde{\mathbb J}_{-1}$ to $\widetilde{\mathbb J}_1$. 
\ee
Let $\mathbb J_\lambda$ be a generic $C^\infty$-perturbation of $\widetilde{\mathbb J}_\lambda$ such that the perturbation goes to $0$ as $\lambda$ or $s$ goes to $\pm \infty.$

We now define the following moduli spaces:

\begin{enumerate}[label = (M\arabic*)]

\item For any $x,x'\in \text{Crit}f$, we define the moduli space $$\widetilde{\mathcal M} (x, x') = \coprod_{\lambda} \widetilde{\mathcal M} _\lambda (x, x')$$
where $\widetilde{\mathcal M} _\lambda (x, x')$ is the moduli space of $\mathbb H_\lambda$-perturbed $\mathbb J_\lambda$-holomorphic strips $(u,\ell)$ with removable singularities at $\pm \infty$ such that $\lim_{s\to -\infty} \ell (s, 0) = \lim_{s\to -\infty} \ell (s, 1) \in  W^u(x)$ and $\lim_{s\to +\infty} \ell (s,0) = \lim_{s \to +\infty} \ell (s,1) \in  W^s(x')$.  

\item \label{M2} For any $\bar\gamma \in R$ and $x\in \text{Crit}f$, we define the moduli space $$\widetilde{\mathcal M}_1 (\bar\gamma, x) = \coprod_{\lambda} \widetilde{\mathcal M} _{1,\lambda} (\bar\gamma, x)$$
where $\widetilde{\mathcal M} _{1,\lambda} (\bar\gamma, x)$ is the moduli space of $\mathbb H_\lambda$-perturbed $\mathbb J_\lambda$-holomorphic strips $(u,\ell)$ with removable singularities at $+ \infty$ such that  $\lim_{s\to +\infty} \ell (s,0) = \lim_{s \to +\infty} \ell (s,1) \in  W^s(x)$ and $\lim_{s\to -\infty} (u(s,\cdot),\ell(s,\cdot)) = \bar\gamma.$

\item For any $\bar\gamma \in R$ and $x\in \text{Crit}f$, we define the moduli space $$\widetilde{\mathcal M}_1 (x, \bar\gamma) = \coprod_{\lambda} \widetilde{\mathcal M} _{1, \lambda} (x, \bar\gamma)$$
where $\widetilde{\mathcal M} _{1, \lambda} (x, \bar\gamma)$ is $\widetilde{\mathcal M} _{1,\lambda} (\bar\gamma, x)$ with reverse directions.

\item For any $\bar\gamma, \bar\gamma' \in R$,  we define the moduli space 
$$\widetilde{\mathcal M}_1(\bar\gamma, \bar\gamma') = \coprod_{\lambda} \widetilde{\mathcal M}_{1,\lambda}(\bar\gamma, \bar\gamma')$$
where $\widetilde{\mathcal M}_{1,\lambda}(\bar\gamma, \bar\gamma')$ is the moduli space of $\mathbb H_\lambda$-perturbed $\mathbb J_\lambda$-holomorphic strips from $\bar\gamma$ to $\bar\gamma'.$

\item For any $x\in \op{Crit}f$, $\bar\gamma \in R$, we define the moduli space 
$$\widetilde{\mathcal M}_2(x, \bar\gamma) = \coprod_{\lambda}\widetilde{\mathcal M}_{2,\lambda}(x, \bar\gamma)$$
where $\widetilde{\mathcal M}_{2,\lambda}(x, \bar\gamma)$ is the moduli space of tuple $((u_1, \ell_1), (\alpha, T), (u_2,\ell_2))$
such that 
    \be
    \item $(u_1, \ell_1)$ is a $\mathbb H_\lambda$-perturbed $\mathbb J_\lambda$-holomorphic strip with removable singularity at $\infty$ satisfying $\lim_{s\to \infty} \ell_1(s,0)= \lim_{s \to \infty} \ell_1(s,1) \in W^u(x);$
    \item $\alpha: [-T, T] \to M$ is a a Morse trajectory with $\alpha(-T) = \lim_{s\to \infty} \ell_1(s,0) = \lim_{s\to \infty} \ell_1(s,1)$;
    \cpp
    \item  $(u_2, \ell_2)$ is a $J$-holomorphic strip with removable singularity at $-\infty$ satisfying $\lim_{s\to -\infty} \ell_2(s,0)= \lim_{s \to -\infty} \ell_2(s,1) = \alpha(T),$ and\linebreak $\lim_{s\to \infty} (u_2(s,\cdot), \ell_2(s,\cdot)) = \bar\gamma.$ \cb
    \ee

\item For any $x\in \op{Crit}f$, $\bar\gamma \in R$, we define the moduli space 
$$\widetilde{\mathcal M}_2(\bar\gamma, x) = \coprod_{\lambda}\widetilde{\mathcal M}_{2,\lambda}(\bar\gamma, x)$$
where $\widetilde{\mathcal M}_{2,\lambda}(\bar\gamma, x)$ is $\widetilde{\mathcal M}_{2,\lambda}(x, \bar\gamma)$ with reverse directions.

\cpp
\item For any $\bar\gamma, \bar\gamma' \in R$, we define the moduli space 
$$\widetilde{\mathcal M}_2(\bar\gamma, \bar\gamma') = \coprod_{\lambda}\widetilde{\mathcal M}_{2,\lambda}(\bar\gamma, \bar\gamma')$$
where $\widetilde{\mathcal M}_{2,\lambda}(\bar\gamma, \bar\gamma')$ is the moduli space of tuple $((u_1, \ell_1), (\alpha, T), (u_2,\ell_2))$
such that 
    \be
    \item $(u_1, \ell_1)$ is a $J$-holomorphic strip with removable singularity at $+\infty$ and satisfies $\lim_{s\to -\infty} (u_1(s,\cdot), \ell_1(s,\cdot)) = \bar\gamma.$
    \item $\alpha: [-T, T] \to M$ is a a Morse trajectory with $\alpha(-T) = \lim_{s\to \infty} \ell_1(s,0) = \lim_{s\to \infty} \ell_1(s,1)$;
    \item  $(u_2, \ell_2)$ is a $\mathbb H_\lambda$-perturbed $\mathbb J_\lambda$-holomorphic strip with removable singularity at $-\infty$ satisfying $\lim_{s\to -\infty} \ell_2(s,0)= \lim_{s \to -\infty} \ell_2(s,1) = \alpha(T),$ and $\lim_{s\to \infty} (u_2(s,\cdot), \ell_2(s,\cdot)) = \bar\gamma'.$
    \ee
\item For any $\bar\gamma, \bar\gamma' \in R$, we define the moduli space 
$$\widetilde{\mathcal M}_3(\bar\gamma, \bar\gamma') = \coprod_{\lambda}\widetilde{\mathcal M}_{3,\lambda}(\bar\gamma, \bar\gamma')$$
where $\widetilde{\mathcal M}_{3,\lambda}(\bar\gamma, \bar\gamma')$ is $\widetilde{\mathcal M}_{2,\lambda}(\bar\gamma, \bar\gamma')$ with reverse directions. \cb
\end{enumerate}

We define the chain homotopy $K: CF^*(\iota) \to CF^{*-1}(\iota)$ by 
for any $x \in \op{Crit}f$,
\begin{align*}
    K(x) = & \sum_{x' \in \text{Crit}f, \ind(x) - \ind(x') = 1 } \sharp \widetilde{\mathcal M}  (x',x) \cdot x' \\
    & + \sum _{\bar\gamma \in R, \ind(x) - \ind(\bar\gamma) = 1} (\sharp \widetilde{\mathcal M}_1  ( \bar\gamma, x)  + \sharp \widetilde{\mathcal M}_2  (\bar\gamma, x)) \cdot \bar\gamma \\
\end{align*}
and for any $\bar\gamma \in R$,
\cpp
\begin{align*}
    K(\bar\gamma) =  & \sum_{\bar\gamma' \in R, \ind(\bar\gamma) - \ind(\bar\gamma') = 1 } (\sharp \widetilde{\mathcal M}_1(\bar\gamma', \bar\gamma) + \sharp \widetilde{\mathcal M}_2(\bar\gamma', \bar\gamma) + \sharp \widetilde{\mathcal M}_3(\bar\gamma', \bar\gamma)) \cdot \bar\gamma' \\
    & + \sum_{x' \in \text{Crit}f, \ind(\bar\gamma) - \ind(x') = 1 } (\sharp (\widetilde{\mathcal M}_1  (x', \bar\gamma)) + \sharp \widetilde{\mathcal M}_2  (x', \bar\gamma)) \cdot x'.
\end{align*}
\cb

To verify the Equation~\ref{eqn: PHP}, one can check it component wise.
We look at the component from $\bold C$ to $\bold C$ first.
For any $x, x' \in \op{Crit}f$ with $\ind(x) = \ind(x')$ we look at the boundary of the $1$-dimensional moduli space $\widetilde{\mathcal M}(x, x')$ defined in (M1). 
Bad degenerations (holomorphic strips with trees of holomorphic discs attached) are ruled out for index reasons as in the proof of Lemma~\ref{prop:d-is-defined.}.
When $\lambda \to +\infty$, the moduli space corresponds to the $\langle \Phi \circ \Psi(x), x'\rangle$ and when $\lambda \to -\infty$, the holomorphic strip $(u, \ell)$ has to be constant because the Lagrangian is exact and the there is no branch jump. Hence, we get a Morse trajectory connecting $x$ and $x'$. Since $\ind(x) = \ind(x')$, the Morse trajectory has to be constant, and $x = x'$. When $\lambda$ is finite, all the degenerations corresponds to terms in $[K, d]$ shown as in Figure~\ref{chain homotopy M1}.

\cpp
Similarly, the terms of equation \eqref{eqn: PHP} corresponding to the components $\bold R\to\bold R$, $\bold C\to\bold R$, and $\bold R\to\bold C$ can be identified with boundary components of the moduli spaces $\widetilde{\mathcal M}_1(\bar\gamma, \bar\gamma')$,  $\widetilde{\mathcal M}_2(\bar\gamma, \bar\gamma')$,  $\widetilde{\mathcal M}_3(\bar\gamma, \bar\gamma')$, $\widetilde{\mathcal M}_2(x,\bar\gamma)$, and $\widetilde{\mathcal M}_2(\bar\gamma, x)$.
We leave it to the reader to work through these terms on their own, but we'll point out one interesting subtlety: these moduli spaces have additional boundary components that do not correspond to terms in \eqref{eqn: PHP}.
Instead, these additional boundary components occur in pairs, so they cancel with each other because we are using $\Z_2$-coefficients.
An example of such a pair is the following.
Let the parameter $T$ in (M5) go to zero, so that the discs $u_1$ and $u_2$ touch each other.
This also appears as a degeneration (boundary component) of the moduli space (M3) when the strip $u$ breaks off another holomorphic strip on the $+\infty$ side (i.e., becomes a broken trajectory).

\cb

\cbu 
\noindent 
{\bf Step 2. $\Psi \circ \Phi $ is homotopic to $\op{Id}_{CF^*_H}$.}
We define the moduli space $$\widetilde{\mathcal M}^H(\bar\gamma', \bar\gamma) := \coprod_{T \in (-\infty, \infty)} \widetilde{\mathcal M}^H_T(\bar\gamma', \bar\gamma),$$  where $\widetilde{\mathcal M}^H_T(\bar\gamma', \bar\gamma)$ is the space of tuples $((u_1,\ell_1), \alpha, (u_2, \ell_2))$ such that
\be 
 \item $(u_1,\ell_1) \in \widetilde{\mathcal M}_{\bold{\bar J, \bar H}}(\bar\gamma', \emptyset)$,
 \item $\alpha: [-T, T] \to M$ is a Morse trajectory with $\alpha(-T) = \lim_{s \to \infty} \ell_1(s,0) = \lim_{s \to \infty} \ell_1(s,1),$
 \item $(u_2,\ell_2) \in \widetilde{\mathcal M}_{\bold{J, H}}(\emptyset,\bar\gamma)$ with $\alpha(T) = \lim_{s \to -\infty} \ell_2(s,0) = \lim_{s \to -\infty} \ell_2(s,1).$ 
\ee

We define the map $\Xi: CF^*_H(\iota) \to CF^*_H(\iota)$ by 
$$\Xi(\bar\gamma) = \sum_{\ind{\bar\gamma'}= \ind{\gamma}}\widetilde{\mathcal M}^H_0(\bar\gamma', \bar\gamma) \bar\gamma',$$
and the map $K_1: CF^*_H(\iota) \to CF^*_H(\iota)$ by 
$$K_1(\bar\gamma) = \sum_{\ind{\bar\gamma'}= \ind{\gamma}-1}\widetilde{\mathcal M}^H(\bar\gamma', \bar\gamma) \bar\gamma'.$$
\begin{claim}
$\Psi_{\bold C} \circ \Phi_{\bold C} + \Psi_{\bold R, 2} \circ \Phi_{\bold R, 1} + \Psi_{\bold R, 1} \circ \Phi_{\bold R, 2} + \Xi = [K_1, d_H].$ 
\end{claim}
\begin{proof}[Sketch of proof]
The relation is given by the boundary of the $1$-dimensional moduli space $\widetilde{\mathcal M}^H(\bar\gamma', \bar\gamma)$ with $\ind \bar\gamma' = \ind \bar\gamma.$ See Figure~\ref{chain homotopy x}.
\end{proof}

Since $\Phi \circ \Psi = \Psi_{\bold C} \circ \Phi_{\bold C} + \Psi_{\bold R, 2} \circ \Phi_{\bold R, 1} + \Psi_{\bold R, 1} \circ \Phi_{\bold R, 2} + \Psi_{\bold R, 1} \circ \Phi_{\bold R, 1} + \Psi_{\bold R, 2} \circ \Phi_{\bold R, 2}$ and $\Psi_{\bold R, 2} \circ \Phi_{\bold R, 2} = 0$ by the exactness of  $\iota$, in order to show that $\Phi \circ \Psi$ is homotopic to $\op{Id}_{CF^*_H}$ we only need 
\begin{claim}
$\Xi + \Psi_{\bold R, 1} \circ \Phi_{\bold R, 1}$ is homotopic to $\op{Id}_{CF^*_H}$.
\end{claim}
\begin{proof}[Sketch of proof]
We define a glued Hamiltonian $\mathbb H^\lambda := \bar{\bold{H}} \sharp_\lambda {\bold{H}}$ the same way as $\mathbb H_\lambda$ but with $\bold H$ and $\bar{\bold H}$ switched. Similarly, we define $\mathbb J^\lambda.$
We consider the moduli space $\widetilde{\mathcal M}^{\mathbb H}(\bar\gamma', \bar\gamma) = \coprod_{\lambda} \widetilde{\mathcal M}_{\mathbb J^\lambda, \mathbb H^\lambda}(\bar\gamma', \bar\gamma)$,
and construct the map 
$K_2: CF^*_H(\iota) \to CF^*_H(\iota)$ by 
$$K_2(\bar\gamma) = \sum_{\ind{\bar\gamma'}= \ind{\gamma}-1}\widetilde{\mathcal M}^{\mathbb H}(\bar\gamma', \bar\gamma) \bar\gamma'.$$
Then one can show that the boundary of the $1$-dimensional moduli space $\widetilde{\mathcal M}^{\mathbb H}(\bar\gamma', \bar\gamma)$ with $\ind \bar\gamma'  = \ind \bar\gamma$ yields
$$\Xi + \Psi_{\bold R, 1} \circ \Phi_{\bold R, 1}= [K_1, d_H].$$
\end{proof}
\end{proof}

\cbu 
\begin{figure}[!htb] \label{chain homotopy M1}
\centering
\begin{tikzpicture}[scale = 0.7, every node/.style={scale=0.6}]
\def \u {7}
\def \v {2}
\def \s {4}
\def \t {6}
\def \i {8}
\draw[->] (0,1) -- (2,1);
\draw (2,1) -- (3,1);
\draw (3, 1) .. controls (3, 1.5) and (5,1.5) .. (5.5, 1.5)
(3,1) .. controls (3,0.5) and (5, 0.5) .. (5.5,0.5)
(5.5,1.5) .. controls (6, 1.5) and (8,1.5) .. (8,1)
(5.5, 0.5) .. controls (6, 0.5) and (8, 0.5) .. (8,1);
\draw[->] (8,1) -- (9,1);
\draw (9,1) -- (11,1);
\draw[dashed] (5.5, 1.5) -- (5.5, 0.5);
\node[right] at (5.5,1) {$\bar\gamma$};
\node[left] at (-0.2,1) {$x$};
\node[right] at (11.2, 1){$x'$ \hspace{2cm} $\lambda = +\infty$};
\filldraw (0,1) circle [radius = 0.05]
(11,1) circle [radius = 0.05];

\draw[->] (0,1- \v) -- (1,1- \v);
\draw[->] (1,1- \v) -- (2.5,1- \v);
\draw (2.5,1- \v) -- (3,1- \v);
\draw (3, 1- \v) .. controls (3, 1.5- \v) and (5,1.5- \v) .. (5.5, 1.5- \v)
(3,1- \v) .. controls (3,0.5- \v) and (5, 0.5- \v) .. (5.5,0.5- \v)
(5.5,1.5- \v) .. controls (6, 1.5- \v) and (8,1.5- \v) .. (8,1- \v)
(5.5, 0.5- \v) .. controls (6, 0.5- \v) and (8, 0.5- \v) .. (8,1- \v);
\draw[->] (8,1- \v) -- (9,1- \v);
\draw (9,1- \v) -- (11,1- \v);
\node[above] at (1.5, 1- \v) {$x''$};
\node[left] at (-0.2,1- \v) {$x$};
\node[right] at (11.2, 1- \v){$x'$};
\filldraw (0,1- \v) circle [radius = 0.05]
(1.5,1- \v) circle [radius = 0.05]
(11,1- \v) circle [radius = 0.05];

\draw[->] (0,1- \s) -- (2,1- \s);
\draw (2,1- \s) -- (3,1- \s);
\draw (3, 1- \s) .. controls (3, 1.5- \s) and (5,1.5- \s) .. (5.5, 1.5- \s)
(3,1- \s) .. controls (3,0.5- \s) and (5, 0.5- \s) .. (5.5,0.5- \s)
(5.5,1.5- \s) .. controls (6, 1.5- \s) and (8,1.5- \s) .. (8,1- \s)
(5.5, 0.5- \s) .. controls (6, 0.5- \s) and (8, 0.5- \s) .. (8,1- \s);
\draw[->] (8,1- \s) -- (9,1- \s);
\draw[->] (9,1- \s) -- (10.5,1- \s);
\draw (10.5,1- \s) -- (11,1- \s);
\node[above] at (9.5, 1- \s) {$x''$};
\node[left] at (-0.2,1- \s) {$x$};
\node[right] at (11.2, 1- \s){$x'$};
\filldraw (0,1- \s) circle [radius = 0.05]
(9.5,1- \s) circle [radius = 0.05]
(11,1- \s) circle [radius = 0.05];

\draw[->] (0,1- \t ) -- (2,1- \t );
\draw (2,1- \t ) -- (3,1- \t );
\draw (3, 1- \t ) .. controls (3, 1.5- \t ) and (4, 1.7-\t) .. (4.5, 1- \t )
      (3, 1- \t ) .. controls (3, 0.5- \t ) and (4, 0.3- \t ) .. (4.5, 1- \t )
      (4.5, 1 - \t ) .. controls (5.5, 1.5- \t ) and (8, 1.8 - \t ) .. (8,1- \t )
      (4.5, 1 - \t ) .. controls (5.5, 0.5- \t ) and (8, 0.2 - \t ) .. (8,1- \t );
\draw[->] (8,1- \t ) -- (9,1- \t );
\draw (9,1- \t ) -- (11,1- \t );
\node[left] at (-0.2,1- \t ) {$x$};
\node[right] at (11.2, 1- \t ){$x'$};
\node[above] at (4.5, 1 - \t ){$\bar\gamma$};
\filldraw (0,1- \t ) circle [radius = 0.05]
(11,1- \t ) circle [radius = 0.05];
\draw (4.5, 1 - \t ) circle [radius = 0.05];
\fill [color=white!60] (4.5, 1 - \t ) circle [radius = 0.05];

\draw[->] (0,1- \i ) -- (2,1- \i );
\draw (2,1- \i ) -- (3,1- \i );
\draw (11-3, 1- \i ) .. controls (11-3, 1.5- \i ) and (11-4, 1.7-\i) .. (11-4.5, 1- \i )
      (11-3, 1- \i ) .. controls (11-3, 0.5- \i ) and (11-4, 0.3- \i ) .. (11-4.5, 1- \i )
      (11-4.5, 1 - \i ) .. controls (11-5.5, 1.5- \i ) and (11-8, 1.8 - \i ) .. (11-8,1- \i )
      (11-4.5, 1 - \i ) .. controls (11-5.5, 0.5- \i ) and (11-8, 0.2 - \i ) .. (11-8,1- \i );
\draw[->] (8,1- \i ) -- (9,1- \i );
\draw (9,1- \i ) -- (11,1- \i );
\node[left] at (-0.2,1- \i ) {$x$};
\node[right] at (11.2, 1- \i ){$x'$};
\node[above] at (11-4.5, 1 - \i ){$\bar\gamma$};
\filldraw (0,1- \i ) circle [radius = 0.05]
(11, 1- \i ) circle [radius = 0.05];
\draw (11-4.5, 1 - \i ) circle [radius = 0.05];
\fill [color=white!60] (11-4.5, 1 - \i ) circle [radius = 0.05];

\draw[->] (0,-2- \u) -- (2,-2- \u);
\draw (2,-2- \u) -- (3,-2- \u);
\draw (3, -2- \u) .. controls (3, -1.5- \u) and (5,-1.5- \u) .. (5.5, -1.5- \u)
(3,-2- \u) .. controls (3,-2.5- \u) and (5, -2.5- \u) .. (5.5,-2.5- \u)
(5.5,-1.5- \u) .. controls (6, -1.5- \u) and (8,-1.5- \u) .. (8,-2- \u)
(5.5, -2.5- \u) .. controls (6, -2.5- \u) and (8, -2.5- \u) .. (8,-2- \u);
\draw[->] (8,-2- \u) -- (9,-2- \u);
\draw (9,-2- \u) -- (11,-2- \u);
\node[left] at (-0.2,-2- \u) {$x$};
\node[right] at (11.2, -2- \u){$x'$  \hspace{2cm} $\lambda = -\infty$};
\node[right] at (5,-2- \u) {constant};
\filldraw (0,-2- \u) circle [radius = 0.05]
(11,-2- \u) circle [radius = 0.05];

\end{tikzpicture}
\caption[]{Degeneration of $\widetilde{\mathcal M}(x, x')$. The smaller region in the 4th and the 5th pictures are unperturbed $J$-holomorphic curves.}
\label{chain homotopy M1}
\end{figure}

\begin{figure}[!htb]
\centering
\begin{tikzpicture}[scale = 0.7, every node/.style={scale=0.6}]
\def \d {2}
\def \a {0}
\def \b {-0.5}
\def \c {-1}
\def \x {2}
\def \y {4}
\def \z {6}
\def \w {8}
\def \u {10}
\def \v {12}

\draw[dashed] (0* \d, \a) -- (0* \d, \c);
\draw (0* \d, \c) .. controls (1.5 * \d, \c) and (2 * \d, 1.8 * \b) .. (2 * \d, \b)
      (2 * \d, \b) .. controls (2 * \d, 0.2 * \b) and (1.5 * \d, \a) .. (0* \d, \a)
      (3 * \d, \b) --(4 * \d, \b);
\draw[->] (2 * \d, \b) --(3 * \d, \b);
      
\draw[dashed] (6* \d, \a) -- (6* \d, \c);
\draw (6* \d, \c) .. controls (4.5 * \d, \c ) and (4*\d, 1.8* \b) .. (4 * \d, \b)
      (4 * \d, \b) .. controls (4 * \d, 0.2 * \b) and (4.5*\d, \a) .. (6* \d, \a);
\node[right] at (7* \d, \b) {$ $};
\draw[-Triangle, very thick](3 * \d, -1) -- (3 * \d, -2);
\node[right] at (3 * \d, -1.5){degenerates};

\draw[dashed] (0* \d, \a-\x ) -- (0* \d, \c-\x );
\draw (0* \d, \c-\x ) .. controls (1.5 * \d, \c-\x ) and (2 * \d, 1.8 * \b-\x ) .. (2 * \d, \b-\x )
      (2 * \d, \b-\x ) .. controls (2 * \d, 0.2 * \b-\x ) and (1.5 * \d, \a-\x ) .. (0* \d, \a-\x );
\draw[->] (2 * \d, \b-\x ) --(2.5 * \d, \b-\x );
\draw[->] (2.5 * \d, \b-\x ) -- (3.5 * \d, \b-\x );
\draw (3.5 * \d, \b-\x ) --(4 * \d, \b-\x );
      
\draw[dashed] (6* \d, \a-\x ) -- (6* \d, \c-\x );
\draw (6* \d, \c-\x ) .. controls (4.5 * \d, \c -\x ) and (4*\d, 1.8* \b-\x ) .. (4 * \d, \b-\x )
      (4 * \d, \b-\x ) .. controls (4 * \d, 0.2 * \b-\x ) and (4.5*\d, \a-\x ) .. (6* \d, \a-\x );

\filldraw (3 * \d, \b-\x ) circle [radius = 0.05];
\node[right] at (7* \d, \b-\x ) {$\Psi_{\bold C} \circ \Phi_{\bold C}$};

\draw[dashed] (0* \d, \a-\y ) -- (0* \d, \c-\y );
\draw (0* \d, \c-\y ) .. controls (1.5 * \d, \c-\y ) and (2 * \d, 1.8 * \b-\y ) .. (2 * \d, \b-\y )
      (2 * \d, \b-\y ) .. controls (2 * \d, 0.2 * \b-\y ) and (1.5 * \d, \a-\y ) .. (0* \d, \a-\y )
      (3 * \d, \b-\y ) --(4 * \d, \b-\y );
\draw[->] (2 * \d, \b-\y ) --(3 * \d, \b-\y );
      
\draw[dashed] (6* \d, \a-\y ) -- (6* \d, \c-\y );
\draw (6* \d, \c-\y ) .. controls (5.5 * \d, \c -\y )  .. (5 * \d, \b-\y )
      (6* \d, \a - \y) .. controls (5.5 * \d, \a -\y) ..  (5 * \d, \b-\y )
      (4 * \d, \b-\y ) .. controls (4 * \d, 0.2 * \b-\y ) and (4.5*\d, \a-\y ) .. (5 * \d, \b-\y )
      (4 * \d, \b -\y) .. controls (4 * \d, 1.8 * \b-\y ) and (4.5*\d, \c-\y ) .. (5 * \d, \b-\y );
\filldraw [color=white!60] (5 * \d, \b-\y ) circle [radius = 0.05];
\draw (5 * \d, \b-\y ) circle [radius = 0.05];
\node[right] at (7* \d, \b-\y ) {$\Psi_{\bold{R},1} \circ \Phi_{\bold{R},2}$};

\draw[dashed] (0* \d, \a-\z ) -- (0* \d, \c-\z );

\draw  (0* \d, \a -\z) .. controls (0.5 * \d, \a -\z) .. (1 * \d, \b-\z )
        (0* \d, \c-\z ) .. controls (0.5 * \d, \c-\z ) .. (1 * \d, \b-\z )
      (1 * \d, \b-\z ) .. controls (1.5 * \d, \a-\z ) and (2 * \d, 0.2 *\b -\z ) .. (2* \d, \b-\z )
      (1 * \d, \b-\z ) .. controls (1.5 * \d, \c-\z ) and (2 * \d, 1.8 *\b -\z ) .. (2* \d, \b-\z );
\draw[->] (2 * \d, \b-\z ) --(3 * \d, \b-\z );
\draw (3 * \d, \b-\z ) --(4 * \d, \b-\z );

\draw[dashed] (6* \d, \a-\z) -- (6* \d, \c-\z);
\draw (6* \d, \c-\z) .. controls (4.5 * \d, \c -\z) and (4*\d, 1.8* \b-\z) .. (4 * \d, \b-\z)
      (4 * \d, \b-\z) .. controls (4 * \d, 0.2 * \b-\z) and (4.5*\d, \a-\z) .. (6* \d, \a-\z);
\filldraw [color=white!60] (1 * \d, \b-\z ) circle [radius = 0.05];
\draw (1 * \d, \b-\z ) circle [radius = 0.05];
\node[right] at (7* \d, \b-\z ) {$\Psi_{\bold{R},2} \circ \Phi_{\bold{R},1}$};

\draw[dashed] (0* \d, \a-\w) -- (0* \d, \c-\w);
\draw (0* \d, \c-\w) .. controls (1.5 * \d, \c-\w) and (3 * \d, 1.8 * \b-\w) .. (3 * \d, \b-\w)
      (3 * \d, \b-\w) .. controls (3 * \d, 0.2 * \b-\w) and (1.5 * \d, \a-\w) .. (0* \d, \a-\w);

\draw[dashed] (6* \d, \a-\w) -- (6* \d, \c -\w);
\draw (6* \d, \c-\w) .. controls (4.5 * \d, \c -\w) and (3*\d, 1.8* \b-\w) .. (3 * \d, \b-\w)
      (3 * \d, \b-\w) .. controls (3 * \d, 0.2 * \b-\w) and (4.5*\d, \a-\w) .. (6* \d, \a-\w);
\node[right] at (7* \d, \b-\w) {$\Xi$};

\node[right] at (7* \d, \b-\y ) {$\Psi_{\bold{R},1} \circ \Phi_{\bold{R},2}$};

\draw[dashed] (0* \d, \a-\u) -- (0* \d, \c-\u);
\draw[dashed] (1 * \d, \a-\u) -- (1 * \d, \c-\u);
\draw (0* \d, \c-\u) -- (1 * \d, \c-\u);
\draw (0* \d, \a-\u) -- (1 * \d, \a-\u);
\draw (1* \d, \c-\u) .. controls (1.5 * \d, \c-\u) and (2 * \d, 1.8 * \b-\u) .. (2 * \d, \b-\u)
      (2 * \d, \b-\u) .. controls (2 * \d, 0.2 * \b-\u) and (1.5 * \d, \a-\u) .. (1 * \d, \a-\u)
      (3 * \d, \b-\u) --(4 * \d, \b-\u);
\draw[->] (2 * \d, \b-\u) --(3 * \d, \b-\u);
      
\draw[dashed] (6* \d, \a-\u) -- (6* \d, \c-\u);
\draw (6* \d, \c-\u) .. controls (4.5 * \d, \c -\u) and (4*\d, 1.8* \b-\u) .. (4 * \d, \b-\u)
      (4 * \d, \b-\u) .. controls (4 * \d, 0.2 * \b-\u) and (4.5*\d, \a-\u) .. (6* \d, \a-\u);
\node[right] at (7* \d, \b-\u) {$d_H \circ K_1$};

\draw[dashed] (0* \d, \c-\v ) -- (0* \d, \a-\v );
\draw (0* \d, \c-\v ) .. controls (1.5 * \d, \c-\v ) and (2 * \d, 1.8 * \b-\v ) .. (2 * \d, \b-\v )
      (2 * \d, \b-\v ) .. controls (2 * \d, 0.2 * \b-\v ) and (1.5 * \d, \a-\v ) .. (0* \d, \a-\v )
      (3 * \d, \b-\v ) --(4 * \d, \b-\v );
\draw[->] (2 * \d, \b-\v ) --(3 * \d, \b-\v );

\draw[dashed] (6* \d, \a-\v) -- (6* \d, \c-\v);
\draw (5* \d, \c-\v) .. controls (4.5 * \d, \c -\v) and (4*\d, 1.8* \b-\v) .. (4 * \d, \b-\v)
      (4 * \d, \b-\v) .. controls (4 * \d, 0.2 * \b-\v) and (4.5*\d, \a-\v) .. (5* \d, \a-\v)
      (5* \d, \c-\v) -- (6* \d, \c-\v)
      (5* \d, \a-\v) -- (6* \d, \a-\v);
\draw[dashed] (5* \d, \c-\v) --  (5* \d, \a-\v);
\node[right] at (7* \d, \b-\v) {$K_1 \circ d_H$};



\end{tikzpicture}
\caption[]{Degeneration of $\widetilde{\mathcal M}^H(\bar\gamma', \bar\gamma)$.}
\label{chain homotopy x}
\end{figure}

\cb

\section{Examples}\label{example}
We calculate the Floer cohomology of some immersed Lagrangian spheres inside the smoothing of an $A_N$ surface.
These calculations first appeared in the preprint \cite{2013arXiv1311.2327A} by the first author.
We present here the main points and refer the reader to the paper for more details.

We start by introducing the main objects.
For any integer $N\geq1$, let $F_N:\C^3\to\C$ be the function $F_N=x_1x_2-(x_3-1)\cdots (x_3-N)$.
We consider the symplectic manifold
\begin{displaymath}
  M_N=\sett{ F_N=0 } \subset \C^3,\quad \omega = \frac{i}{2} \sum d x_j\wedge d\bar x_j\bigl|M_N.
\end{displaymath}
We make $M_N$ an exact symplectic manifold by choosing the 1-form
\begin{displaymath}
  \lambda = \frac{i}{4}\sum\left( x_j d\bar x_j - \bar x_j d x_j\right)
\end{displaymath}
as the primitive for $\omega$ (i.e., $d\lambda = \omega$).
We define a squared phase map on $M_N$ by choosing for a holomorphic volume form $\Omega$ the Poincar\'e residue of $dx_1\wedge dx_2\wedge dx_3 / F_N$.
This complex volume form defines a squared phase map $\mathrm{Det}^2_\Omega:\mathcal G\to S^1$ (where $\mathcal G$ is the bundle of Lagrangian planes over $M_N$).

We give $M_N$ two additional structures which are useful for constructing Lagrangian submanifolds.
The first additional structure is the map $\pi:M_N\to\C$, $(x_1,x_2,x_3)\mapsto x_3$, which is a Lefschetz fibration with critical values $1,2,\ldots,N\in\C$.
The smooth fibers are of the form $x_1x_2=c$ with $c\neq 0$, and the critical fibers are of the form $x_1x_2=0$.
The second additional structure consists of real-valued maps $H=\frac{1}{2}|x_1|^2-\frac{1}{2}|x_2|^2$ and $G=|x_3|^2$.
$H$ and $G$ Poisson commute and hence define an integrable system on $M_N$.
The leaves of the integrable system are compact, so the smooth leaves are Lagrangian tori.

\begin{figure}
  \includegraphics[]{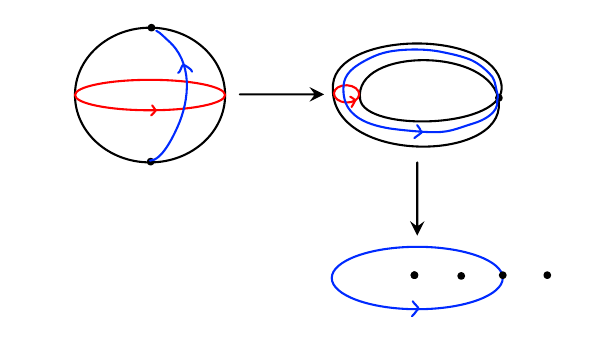}
  \caption{Top left: The sphere $S^2$. The red (horizontal) circle is a line of latitude ($a=\text{constant}$ in cylindrical coordinates $(a,e^{ib})$). The blue (vertical) curve is a line of longitude ($e^{ib}=\text{constant}$).\\\\Top right: The immersed Lagrangian $\iota_{N,r}$. The red (small) circle is the image of the line of latitude and the blue (large) circle is the image of the line of longitude. The line of longitude starts on the south pole ($q_{S^2}$) branch and ends on the north pole ($p_{S^2}$) branch.\\\\Bottom right: The circle $|z|=r$ inside $\C$ which is the image of the Lagranian under the Lefschetz fibration $\pi$. The black dots are the points $0,1,\ldots,N$. The line of longitude wraps counterclockwise around the circle. The point $r\in\C$ is the image of both the north and south poles, and points near the south pole map to the first quadrant while points near the north pole map to the fourth quadrant.}
\label{fig:immersed-sphere}
\end{figure}
For each $N\geq 1$ and each $r\in \sett{ 1, \ldots, N}$ we define an immersed Lagrangian sphere $\iota_{N,r}:S^2\to M_N$ inside $M_N$.
See Figure \ref{fig:immersed-sphere}.
The image $L_{N,r}=\iota_{N,r}(S^2)$ can be described using either the Lefschetz fibration or the integrable system.
In terms of the integrable system, we define
\begin{displaymath}
  L_{N,r}=\sett{ H=0,\ G=r^2}.
\end{displaymath}
In terms of the Lefschetz fibration, $L_{N,r}$ is the union of all the vanishing cycles over the curve $|z|=r$ in the base $\C$ of the fibration.
Over the point $z\in \C$, the fiber of the Lefschetz fibration is $\sett{x_1x_2=(z-1)\cdots(z-N),x_3=z}$, and the vanishing cycle is the set of points with $|x_1|=|x_2|$.
The union of these vanishing cycles is clearly the set of points with $G=r^2$ and $H=0$, so the two descriptions are equivalent.
The critical point of $\pi$ in the fiber over $z=r$ is also a critical point of the integrable system and it has focus-focus type.
Thus $L_{N,r}$ has a transverse self-intersection at this critical point.
The vanishing cycles are all circles, so topologically $L_{N,r}$ is a singular circle bundle over the base curve $|z|=r$ that has one fiber pinched to a point, so $L_{N,r}$ is a ``pinched torus''.
This also follows from the fact that $L_{N,r}$ contains a unique singularity of the integrable system and it is of focus-focus type.

More formally, there exists a Lagrangian immersion $\iota_{N,r}:S^2\to M_N$ with image $L_{N,r}$, and we can think of the north and south poles as being the preimage of the double point (pinched circle) in $L_{N,r}$.
To be explicit, let $S^2\subset\R^3$ be the standard sphere.
We define cylindrical coordinates $(a,e^{ib})\in(-\pi,\pi)\times S^1$ on $S^2$ by
\begin{align*}
  x &= \cos(a/2)\cos(b) \\
  y &= \cos(a/2)\sin(b) \\
  z &= \sin(a/2).
\end{align*}
We let $p_{S^2}=(\pi,0)$ (the ``north pole'') and $q_{S^2}=(-\pi,0)$ (the ``south pole'').
We want to give an explicit formula for $\iota_{N,r}$ in terms of cylindrical coordinates.
To this end, let $\rho=x^2+y^2=\cos^2(a/2)$ and choose a function $f:[-\pi,\pi]\to[-\pi,\pi]$ such that
\begin{gather*}
  f(\pm\pi)=\pm\pi,\\
  f'(a)>0\,\mathrm{for}\,a\neq0,\\
  f(a)=-\pi+\rho\,\mathrm{for}\,a\,\mathrm{near}\,-\pi,\\
  f(a)=a\,\mathrm{for}\,a\,\mathrm{near}\,0,\\
  f(a)=\pi-\rho\,\mathrm{for}\,a\,\mathrm{near}\,\pi.\\
\end{gather*}
Also, let $\xi:[-\pi,\pi]\to\C$ be the function
\begin{displaymath}
  \xi(a)=\sqrt{-re^{if(a)}-1}\cdots\sqrt{-re^{if(a)}-r}\cdots\sqrt{-re^{if(a)}-N}.
\end{displaymath}
For $a=0$, all the terms under the square root signs are negative, and we choose the square root function so that (at $a=0$) $\sqrt{x}=i\sqrt{|x|}$ for $x<0$.
In other words, we think of the phase of the negative real line as being $\pi$ and put the branch cut along the positive real axis.
Then our formula for $\iota_{N,r}$ in terms of cylindrical coordinates is
\begin{displaymath}
  \iota_{N,r}:(a,e^{ib})\mapsto (e^{ib}\xi(a),e^{-ib}\xi(a),-re^{if(a)})\in M_N\subset\C^3.
\end{displaymath}
It is proved in Lemma 2.5 in \cite{2013arXiv1311.2327A} that this extends smoothly to all of $S^2$ and gives an immersion of $S^2$ with image $L_{N,r}$ that maps the north and south poles to the double point.

The image in $\C$ under $\pi\circ\iota_{N,r}$ of the path $s\mapsto(s,e^{ib})\in S^2$ that goes from the south pole to the north pole is the circle that starts at $z=r$ and travels counterclockwise until the starting point $z=r$ is again reached.

A grading for $\theta_{N,r}:S^2\to\R$ is given by the formula
\begin{equation}\label{eq:grading immersed sphere}
  \theta_{N,r}(a,e^{ib})=\frac{f(a)}{\pi}.
\end{equation}
See Definition 2.8 in \cite{2013arXiv1311.2327A} for details.
To calculate the index of the self-intersection point, we need to find the tangent spaces of the immersion at the north and south poles.
This can be accomplished by calculating $\pd{}{a}\iota_{N,r}$ near $a=\pm\pi$ and rescaling to get a finite vector (which depends on $e^{ib}$).
\begin{lemma}
  There exists constants $C_+$ and $C_-$ with $C_+/C_-\in\R\setminus0$ so that
  \begin{displaymath}
    \lim_{a\to\pm\pi} \rho^{1/2}\pd{}{a}\iota_{N,r}=(e^{ib}C_{\pm}e^{i(\pi/2 \pm \pi/4)}, e^{-ib}C_{\pm}e^{i(\pi/2 \pm \pi/4)},0)\in\C^3.
  \end{displaymath}
\end{lemma}
\begin{proof}
  First note that $\rho(a)=\cos^2(a/2)$ and $f'(a)=\mp\rho'(a)=\mp\cos(a/2)\sin(a/2)$ near $a=\pm\pi$.
  Thus $\rho^{1/2}f'(a)=\mp\cos^2(a/2)\sin(a/2)\to 0$ as $a\to\pm\pi$.

  Next we break $\xi(a)$ up into three factors: $\xi(a)=\xi_0(a)\xi_1(a)\xi_2(a)$ with
  \begin{align*}
    \xi_0(a)&=\sqrt{-re^{if(a)}-1}\cdots\sqrt{-re^{if(a)}-(r-1)},\\
    \xi_1(a)&=\sqrt{-re^{if(a)}-r},\\
    \xi_0(a)&=\sqrt{-re^{if(a)}-(r+1)}\cdots\sqrt{-re^{if(a)}-N}.\\    
  \end{align*}
  Since $\xi_0(a)\neq0$ and $\xi_2(a)\neq0$ for all $a$, $\rho^{1/2}f'(a)\to 0$, and $\xi_1(a)\to0$, we have
  \begin{displaymath}
    \lim_{a\to\pm\pi}\rho^{1/2}\xi'(a)=\xi_0(\pm\pi)\xi_2(\pm\pi)\lim_{a\to\pm\pi}\rho^{1/2}\xi_1'(a).    
  \end{displaymath}

  We turn to calculating $\rho^{1/2}\xi_1'(a)$.
  Near $a=\pm\pi$,
  \begin{align*}
    -re^{if(a)}-r&=-re^{i\pm\pi\mp i\rho}-r=r(e^{\mp i\rho}-1)=r(\cos\rho -1 \mp i\sin\rho)\\
                 &=\mp i r\rho + O(\rho^2).
  \end{align*}
  Thus, by the way we defined the square root function,
  \begin{displaymath}
    \sqrt{-re^{if(a)}-r}=\sqrt{\mp i r\rho + O(\rho^2)}=e^{i(\pi/2 \pm \pi/4)}\sqrt{r\rho}(1+O(\rho)).
  \end{displaymath}
  Hence we have
  \begin{displaymath}
    \xi_1'(a)=e^{i(\pi/2 \pm \pi/4)}\sqrt{r}\rho^{-1/2}/2+O(\rho^{1/2})
  \end{displaymath}
  and
  \begin{displaymath}
    \rho^{1/2}\xi_1'(a)=e^{i(\pi/2 \pm \pi/4)}\sqrt{r}/2+O(\rho).
  \end{displaymath}
  It follows that
  \begin{displaymath}
    \lim_{a\to\pm\pi}\rho^{1/2}\xi_1'(a)=e^{i(\pi/2 \pm \pi/4)}\sqrt{r}/2.
  \end{displaymath}

  Since
  \begin{displaymath}
    \pd{}{a}\iota_{N,r}=(e^{ib}\xi'(a),e^{-ib}\xi'(a),-rf'(a)e^{if(a)}),
  \end{displaymath}
  we have established that
  \begin{displaymath}
    \lim_{a\to\pm\pi} \rho^{1/2}\pd{}{a}\iota_{N,r}=(e^{ib}C'_{\pm}e^{i(\pi/2 \pm \pi/4)}\sqrt{r}/2,e^{-ib}C'_{\pm}e^{i(\pi/2 \pm \pi/4)}\sqrt{r}/2,0).
  \end{displaymath}
  where $C'_{\pm}=\xi_0(\pm\pi)\xi_2(\pm\pi)$.
  Let $C_{\pm}=C'_{\pm}\sqrt{r}/2$.
  To finish the proof, we note that $\xi_2(\pi)=\xi_2(-\pi)\neq0$ and $\xi_0(\pi)=(-1)^{r-1}\xi_0(-\pi)\neq0$, so $C_+/C_-=(-1)^{r-1}$.
\end{proof}
\begin{cor}
  At the double point of $L_{N,r}$, multiplication by $i$ maps the tangent space of one branch into the tangent space of the other branch.
\end{cor}

\begin{lemma}
$\ind (p_{S^2},q_{S^2})=-1$ and $\ind (q_{S^2},p_{S^2})=3$.
\end{lemma}
\begin{proof}
  By Definition \ref{defn:branch jump index}, the index of a branch jump $(p,q)$ is $2+\theta_{N,r}(q)-\theta_{N,r}(p)-2\mathrm{Angle}(d\iota(T_pL),d\iota(T_qL))$.
  By the previous corollary, $2\mathrm{Angle}=1$ (for both orderings of the branch jump), and $\theta_{N,r}(p_{S^2})=1$, $\theta_{N,r}(q_{S^2})=-1$ by formula \eqref{eq:grading immersed sphere}.
  Thus
  \begin{align*}
    \ind(p_{S^2},q_{S^2})&= 2+\theta_{N,r}(q_{S^2})-\theta_{N,r}(p_{S^2})-2\mathrm{Angle}=2-1-1-1=-1,\\
    \ind(q_{S^2},p_{S^2})&= 2+\theta_{N,r}(p_{S^2})-\theta_{N,r}(q_{S^2})-2\mathrm{Angle}=2+1+1-1=3.
  \end{align*}
\end{proof}

The next step in calculating the Floer cohomology is to find the holomorphic strips with boundary on $L_{N,r}$.
We use the complex structure $J$ on $M_N$ induced by the standard complex structure on $\C^3$, and take $J_t=J$ for all $t$ (so that we have a time-independent complex structure).
Finding holomorphic strips with a time-independent complex structure is equivalent to finding holomorphic discs with two marked points, so we start by describing holomorphic discs.

First, we note that if $u$ is a holomorphic disc with boundary on $L_{N,r}$, then $\pi\circ u$ is a holomorphic map from the unit disc to the disc of radius $r$.
See Figure \ref{fig:immersed-sphere}: the image of $\pi\circ u$ fills the the disc in the lower right of the figure.
Such discs must wind the boundary around strictly in the counterclockwise direction (unless constant), and thus an outgoing point of $u$ has branch jump of type $(p_{S^2},q_{S^2})$ and an incoming point has type $(q_{S^2},p_{S^2})$.
If the disc has just one branch jump then the symplectic area is positive and equal to $\mathcal A(q_{S^2},p_{S^2})$.
Thus $\mathcal A(q_{S^2},p_{S^2})>0$, and $\mathcal A(p_{S^2},q_{S^2})<0$, and it follows that $\iota_{N,r}$ satisfies the strong positivity condition.

Holomorphic discs with branch jumps at the marked points are classified in Proposition 6.2 in \cite{2013arXiv1311.2327A}:
Any such disc is of the form $u=(f,g,rh)$ with $h$ a Blaschke product\footnote{\cpp A Blaschke product is a continuous map from the closed unit disc to the closed unit disc that takes the boundary to the boundary and is holomorphic in the interior. Such a function is a finite product of functions of the form $z\mapsto \frac{z-a}{1-\bar a z}$ with $|a|<1$. The degree of the restricted map $S^1\to S^1$ is equal to the number of terms in the Blaschke product.\cb }
\begin{align*}
  f&= e^{i\theta}\xi_1\cdots\xi_{r-1}\sqrt{h-r}\sqrt{2h-r}\cdots\sqrt{rh-r}\sqrt{rh-(r+1)}\cdots\sqrt{rh-N},\\
  g&= e^{-i\theta}\eta_1\cdots\eta_{r-1}\sqrt{h-r}\sqrt{2h-r}\cdots\sqrt{rh-r}\sqrt{rh-(r+1)}\cdots\sqrt{rh-N}.
\end{align*}
Here $\xi_j,\eta_j$ are Blaschke products with $\xi_j\eta_j=\frac{rh-j}{jh-r}$.
Additional non branch jump points can be inserted anywhere on the boundary.

Any holomorphic strip is of the following form:
Take a disc with one outgoing marked point and one incoming marked point such that at least one of the marked points has a branch jump.
There is then a biholomorphism that maps the strip to the disc minus the two points in such a way that $-\infty$ maps to the incoming marked point and $+\infty$ maps to the outgoing marked point.
Any holomorphic strip can then be factored as such a biholomorphism followed by one of the holomorphic discs described above.
Conversely, any such composition is a holomorphic strip.

The next step is to choose a Morse function on $S^2$.
We take a Morse function on $S^2$ that has a maximum at $p_{max}:=(0,1)$ (the right-hand side refers to cylindrical coordinates $(a,e^{ib})$ on $S^2$) and a minumum at $p_{min}:=(0,-1)$ and no other critical points.
The points $p_{min}$ and $p_{max}$ lie on the equator $\sett{a=0}$ between the north pole $p_{S^2}$ and the south pole $q_{S^2}$.

The Floer cochain complex has generators $(p_{S^2},q_{S^2}),(q_{S^2},p_{S^2}),p_{min},p_{max}$ and the indices of these generators are
\begin{align*}
  \ind(p_{S^2},q_{S^2})&=-1,\\
  \ind(p_{min})&=0,\\
  \ind(p_{max})&=2,\\
  \ind(q_{S^2},p_{S^2})&=3.\\
\end{align*}
For degree reasons, we only need the calculate the terms $\langle d (p_{S^2},q_{S^2}),p_{min}\rangle$ and $\langle d p_{max}, (q_{S^2},p_{S^2})\rangle$.
For the first term we count configurations of type (b) in Figure \ref{fig:four types of trajectories} and for the second term we count configurations of type (c).

In both cases, since there is only one branch jump, the Blaschke product $h$ in the holomorphic disc has degree 1 (is an automorphism).
Since $h$ is an automorphism of the unit disc, the right-hand side in the equation $\xi_j\eta_j=\frac{rh-j}{jh-r}$ is also an automorphism of the unit disc.
So for each $j$ exactly one of $\xi_j$ or $\eta_j$ is a constant, and the other one is the right-hand side divided by the constant.
Thus the choice of factoring $\xi_j\eta_j$ into a constant and nonconstant term amounts to choosing a component of the moduli space.
Since $1\leq j\leq r-1$, there are $2^{r-1}$ ways to choose a factoring, and hence $2^{r-1}$ components of the moduli space.
If we fix the location of the branch jump marked point at $1\in S^1$, then there are two degrees of freedom left in the choice of $h$.
One degree of freedom coincides with the automorphisms of the strip, and will be removed when we quotient by automorphisms to get the moduli space.
The remaining degree of freedom is parameterized by the location of the other marked point on the boundary of the disc, so it is parameterized by $S^{1}\setminus\sett{1}\cong \R$.
With $\mathrm{ev}$ denoting the evaluation map of this other marked point, we have $\pi\circ\iota_{N,r}\circ\mathrm{ev}=rh$, which has image $rS^{1}\setminus\sett{r}\subset\C$.
So varying the location of the marked point causes $\mathrm{ev}$ to vary along the lines of longitude $e^{ib}=\text{constant}$ in $S^2$.
The remaining parameter $e^{i\theta}$ determines where the the marked point maps to on $\pi^{-1}(z)\cap L_{N,r}\cong S^1$ ($|z|=r$).
More precisely, varying $\theta$ causes $\mathrm{ev}$ to vary along the lines of latitude of the form $a=\mathrm{constant}$ in $S^2$.
Putting it all together, we have shown the following:
\begin{lemma}
  The moduli space of strips with one branch jump and one non branch jump has $2^{r-1}$ components.
  Each component is diffeomorphic to $\R\times S^1$, and under the evaluation map of the non branch jump point maps diffeomorphically onto the cylindrical coordinate chart $(-\pi,\pi)\times S^1=S^{2}\setminus\sett{q_{S^2},p_{S^2}}$ of the sphere.
\end{lemma}
It is shown in Section 6.3 of \cite{2013arXiv1311.2327A} that all the strips and discs are regular.

Finally we can calculate the Floer differential.
Consider first the term $\langle d (p_{S^2},q_{S^2}),p_{min}\rangle$.
Recall that this counts trajectories of type (b) in Figure \ref{fig:four types of trajectories}.
These trajectories correspond to the fiber product of the unstable manifold of $p_{min}$ with respect to the negative gradient flow (equivalently, the stable manifold with respect to the positive gradient flow) with the moduli space described above.
The unstable manifold is just $p_{min}$ because $p_{min}$ is the global minimum of the Morse function.
Thus the fiber product has one element for each component of the moduli space, and we have
\begin{displaymath}
  \langle d (p_{S^2},q_{S^2}),p_{min}\rangle = 2^{r-1}.
\end{displaymath}
Similarly, $\langle d p_{max}, (q_{S^2},p_{S^2})\rangle$ is the fiber product of the moduli space with the stable manifold of $p_{max}$ with respect to the negative gradient flow.
The stable manifold is just $p_{max}$, so
\begin{displaymath}
\langle d p_{max}, (q_{S^2},p_{S^2})\rangle = 2^{r-1}.
\end{displaymath}
Finally we have the main result of this subsection:
\begin{thm}
  The Floer cohomology with $\Z_2$-coefficients of $\iota_{N,r}:S^2\to M_N$ is $0$ for $r=1$.
  For $r>1$, it is isomorphic to $\Z_2$ in degrees $-1,0,2,3$ and $0$ in other degrees.
\end{thm}
\begin{proof}
  If $r=1$ then the kernel of $d$ is $0$ so the cohomology is $0$.
  If $r>1$ then $d=0$ and the statement follows from the fact that the cochain complex has one generator (basis vector) in each of the degrees $-1,0,2,3$ and no other generators.
\end{proof}

\bibliographystyle{amsplain}
\bibliography{mybib}

\end{document}